\title[The geometric Toda equations.]{The geometric Toda equations for noncompact symmetric spaces.}
\author{Ian McIntosh}
\thanks{For the purposes of open access, the author has applied a Creative Commons Attribution  (CC BY) to any Author Accepted
Manuscript version arising from this submission.}
\address{Department of Mathematics\\ University of York\\ 
York YO10 5DD, UK}
\email{ian.mcintosh@york.ac.uk}
\subjclass{37K10, 53C43, 58E20}
\date{November 18, 2024}
\newcommand{\Z}{\mathbb{Z}}
\newcommand{\C}{\mathbb{C}}
\newcommand{\uC}{\underline{\C}}
\newcommand{\R}{\mathbb{R}}
\newcommand{\RP}{\mathbb{RP}}
\newcommand{\CH}{\mathbb{CH}}
\newcommand{\Ct}{\C^\times}
\newcommand{\e}{\varepsilon}
\newcommand{\be}{\mathbf{e}}
\newcommand{\caD}{\mathcal{D}}
\newcommand{\caE}{\mathcal{E}}
\newcommand{\caG}{\mathcal{G}}
\newcommand{\fso}{\mathfrak{so}}
\newcommand{\fsl}{\mathfrak{sl}}
\newcommand{\fsp}{\mathfrak{sp}}
\newcommand{\fsu}{\mathfrak{su}}
\newcommand{\fgl}{\mathfrak{gl}}
\newcommand{\fa}{\mathfrak{a}}
\newcommand{\fb}{\mathfrak{b}}
\newcommand{\fc}{\mathfrak{c}}
\newcommand{\fd}{\mathfrak{d}}
\newcommand{\fe}{\mathfrak{e}}
\newcommand{\ff}{\mathfrak{f}}
\newcommand{\fg}{\mathfrak{g}}
\newcommand{\fh}{\mathfrak{h}}
\newcommand{\fk}{\mathfrak{k}}
\newcommand{\fm}{\mathfrak{m}}
\newcommand{\fp}{\mathfrak{p}}
\newcommand{\fs}{\mathfrak{s}}
\newcommand{\ft}{\mathfrak{t}}
\newcommand{\fu}{\mathfrak{u}}
\newcommand{\fv}{\mathfrak{v}}
\newcommand{\fz}{\mathfrak{z}}
\newcommand{\Aut}{\operatorname{Aut}}
\newcommand{\End}{\operatorname{End}}
\newcommand{\Hom}{\operatorname{Hom}}
\newcommand{\Ad}{\operatorname{Ad}}
\newcommand{\ad}{\operatorname{ad}}
\newcommand{\INT}{\operatorname{int}}
\newcommand{\Tr}{\operatorname{Tr}}
\newcommand{\<}{\langle}
\renewcommand{\>}{\rangle}
\newcommand{\Span}{\operatorname{Span}}
\newcommand{\rank}{\operatorname{rank}}
\renewcommand{\implies}{\Rightarrow}
\newcommand{\inn}{\mathit{inn}}
\newtheorem{thm}{Theorem}[section]
\newtheorem{prop}[thm]{Proposition}
\newtheorem{lem}[thm]{Lemma}
\newtheorem{cor}[thm]{Corollary}
\newtheorem{defn}[thm]{Definition}
\theoremstyle{remark}
\newtheorem{exam}[thm]{Example}
\newtheorem{rem}[thm]{Remark}
\numberwithin{equation}{section}
\begin{document}

\begin{abstract}
This paper has two purposes. The first is to classify all those versions of the Toda equations which govern the existence of 
$\tau$-primitive harmonic maps 
from a surface into a homogeneous space $G/T$ for which $G$ is a noncomplex noncompact simple real Lie group,
$\tau$ is the Coxeter automorphism which Drinfel'd \& Sokolov assigned to each affine Dynkin diagram, and $T$ is the
compact torus fixed pointwise by $\tau$. 
Here $\tau$ may be either an inner or an outer automorphism. 
We interpret the Toda equations over a compact Riemann surface $\Sigma$ as equations for a metric on a
holomorphic principal $T^\C$-bundle $Q^\C$ over  $\Sigma$ whose Chern connection, when combined with 
holomorphic field $\varphi$, produces a $G$-connection which is flat precisely when the Toda equations hold.
The second purpose is to establish when stability criteria for the pair $(Q^\C,\varphi)$ can be used to prove 
the existence of solutions.  We classify those real forms of the Toda equations for which this
pair is a principal pair and we call these \emph{totally noncompact} Toda pairs: stability theory then gives
algebraic conditions for the existence of solutions.  Every solution to the geometric Toda equations 
has a corresponding $G$-Higgs bundle. We explain how to construct this $G$-Higgs bundle directly from
the Toda pair and show that Baraglia's cyclic Higgs bundles arise from a very special case of totally noncompact cyclic Toda pairs.
\end{abstract}

\maketitle
%\tableofcontents

\section{Introduction.}

The version of the Toda equations we are concerned with here comes from the study of a special class of
harmonic maps from a Riemann surface $\Sigma$ into a Riemannian symmetric space $N$. In the case where surface is a torus and the
symmetric space is of compact type with simple compact isometry group $G$ the Toda equations have the form
\begin{equation}\label{eq:Todacompact}
\Delta_\Sigma w_j = \sum_{k=0}^r \hat C_{jk} e^{w_k},\quad\text{with}\ \sum_{j=0}^r m_jw_j=0.
\end{equation}
Here $w_j:\Sigma\to \R$ are smooth functions, $\Delta_\Sigma = d^*d$ is the (non-negative) Laplacian on functions on $\Sigma$, the matrix 
$\hat C$ is an $(r+1)\times(r+1)$ affine Cartan matrix associated to the complexification $\fg^\C$ of the Lie algebra of $G$, and the
positive integers $m_j$ are the coefficients of linear dependence of the rows of $\hat C$ normalized so that $m_0=1$  
(a summary of the relevant facts about affine Cartan matrices is given in \S \ref{sec:Coxeter} below).
Equations \eqref{eq:Todacompact} are a real form of what  is often called the affine Toda (field) equations: there is also a 
version when $\hat C$ is replaced by the standard Cartan matrix $C$ for $\fg^\C$, which we will refer to as the non-affine Toda equations. 

Equations \eqref{eq:Todacompact} govern the existence of a ``$\tau$-primitive'' harmonic map into a 
homogeneous space $G/T$ which lies over the symmetric space $N\simeq G/H$ by homogeneous projection. Here $T$ is a maximal
torus in the isotropy subgroup $H< G$ and $r=\dim(T)$. This $\tau$-primitive 
map projects down to produce a conformal harmonic, and hence minimal, surface in $N$. Here $\tau$ denotes a
specific finite order automorphism on $G$, called the Coxeter automorphism, whose fixed-point subgroup is $T$. 
The harmonic map into $G/T$ is often referred to as the $\tau$-primitive lift of the harmonic map to
$N$ and the idea has its origins in Riemannian twistor theory (see especially \cite{BurR,Raw,Bla}). 

In the literature the affine Cartan matrix $\hat C$ is usually taken to correspond to an extended Dynkin diagram, in which case $\tau$ is an
inner automorphism, $T< G$ is a maximal torus and $N$ is an inner symmetric space (one can also use a symmetry trick
to adapt these inner Coxeter Toda systems to get harmonic maps into outer symmetric spaces: see e.g.\
\cite{OliT,BolW,BolPW,Bar}).
However, to treat inner and outer symmetric spaces on equal footing we allow $\hat C$ to be the affine Cartan matrix of any affine
diagram, following Drinfel'd \& Sokolov \cite{DriS}.

To adapt the Toda equations to a noncompact real group $G$ it must be preserved by the Coxeter 
automorphism $\tau$ and the compact torus $T<G$ fixed pointwise. The effect on \eqref{eq:Todacompact} is to produce a 
different distribution of plus and minus signs
on the coefficients.  Classifying which of these are possible amounts to answering the question: which real forms $\fg$ of 
a complex simple Lie algebra $\fg^\C$ are invariant under a Coxeter automorphism $\tau$ which fixes a compact
toral subalgebra $\ft\subset\fg$?
We originally thought this question had been answered, at least for inner Coxeter automorphisms, by Carberry \&
Turner in \cite{CarT}, but closer inspection shows that they have overlooked many cases (essentially they overlook the
fact that a real involution can also change the sign as it swaps root vectors and is therefore not determined by its
action on roots alone).

Our classification goes as follows.  Let $\fg^\C$ have Dynkin diagram $\Gamma$. 
Recall (from e.g.\ \cite[Ch.~X]{Hel}) that for each symmetry
$\nu$ of $\Gamma$ there is a unique affine diagram $\Gamma^{(n)}$ where $n\in\{1,2,3\}$ is the order of $\nu$ 
(these can be found in Appendix \ref{app:diagrams}). To each
$\Gamma^{(n)}$ we assign a unique, up to conjugacy, Coxeter automorphism $\tau$ which is an inner automorphism when $n=1$
(the more usual case) and outer otherwise.
Every noncompact real form can be classified up to equivalence by its Cartan involution $\sigma$, thought of as a complex involution.
Since we are assuming $\fg^\C$ is simple the noncompact symmetric spaces we describe are those of Type III in the standard
classification (cf.\ \cite{Hel}), i.e., the real form $\fg$ is not itself a complex Lie algebra.
We say $\sigma$ is compatible with $\tau$ when it commutes with $\tau$ and fixes pointwise the fixed-point subalgebra of $\tau$. 
We say two such involutions are $\tau$-equivalent when they are conjugate in $\Aut(\fg^\C)$ by an automorphism which commutes
with $\tau$. The following theorem combines Prop.\ \ref{prop:classification} and Cor.\ \ref{cor:compatible} below.
\begin{thm}\label{thm:noncompact}
Let $\Gamma^{(n)}$, $n\in\{1,2,3\}$, be an affine Dynkin diagram with vertex set indexed by $I=\{0,\ldots,r\}$ and 
Coxeter automorphism $\tau$.  Every involution $\sigma$
of $\fg^\C$ which is compatible with $\tau$ is determined by a map $\ell:I\to \{-1,1\}$ for which $-1\in\ell(I)$
and satisfying $\prod_{j\in I} \ell(j)^{nm_j}=1$.  
Two such involutions are $\tau$-equivalent if and only if they are related by a symmetry of $\Gamma^{(n)}$. 
Every noncompact symmetric space of Type III has a least one compatible Coxeter automorphism.  
\end{thm}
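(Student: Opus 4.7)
The plan is to analyze $\sigma$ by recording its action on the affine Chevalley-type generators $E_j$ ($j\in I$) attached to the vertices of $\Gamma^{(n)}$, exploiting the $\Z/N$-grading $\fg^\C=\bigoplus_{k}\fg_k$ induced by $\tau$ (with $\fg_0=\ft^\C$). Since $\sigma$ is hypothesised to commute with $\tau$ and fix $\ft^\C$ pointwise, it preserves this grading and the $\ft^\C$-root-space decomposition of $\fg^\C$, acting on each one-dimensional root space as $\pm 1$. Restricting to the generators $E_j$ gives the sign map $\ell\colon I\to\{-1,1\}$, and because $\{E_j\}_{j\in I}$ together with $\ft^\C$ generates $\fg^\C$ as a Lie algebra, $\ell$ determines $\sigma$ uniquely.

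To obtain the constraint $\prod_j\ell(j)^{nm_j}=1$, I would work in the Kac--Moody realisation in which $\sum_j nm_j\alpha_j$ is the null/imaginary root (the factor $n$ accounting for the twist in the cases $n=2,3$). A suitable iterated bracket involving $nm_j$ copies of each $E_j$ lands in $\fg_0=\ft^\C$ as a nonzero element; since $\sigma$ fixes $\ft^\C$ pointwise and acts on brackets by multiplying the signs $\ell(j)$, the product $\prod_j\ell(j)^{nm_j}$ is forced to equal $+1$. The side condition $-1\in\ell(I)$ simply rules out $\sigma=\mathrm{id}$, which would correspond to the compact real form of $\fg^\C$ rather than to a Type III noncompact one.

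For the $\tau$-equivalence claim I would decompose the stabiliser of $\tau$ inside $\Aut(\fg^\C)$ into an inner and an outer part. The inner part is generated by conjugation by $T^\C$, whose action on each $E_j$ is rescaling by a nonzero complex scalar; this does not change $\ell(j)$, being merely a change of basis in the one-dimensional root space. The outer part is accounted for by the symmetries of $\Gamma^{(n)}$, which permute $I$ and hence permute the values of $\ell$. Consequently $\ell$ and $\ell'$ give $\tau$-equivalent involutions if and only if they differ by a diagram symmetry.

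The hardest step, and in my view the main obstacle, is the existence assertion: that \emph{every} Type III real form arises from some compatible $\tau$. I would prove this by matching Helgason's classification of Type III real forms (via, say, Vogan or Satake data) against the list of pairs $(\Gamma^{(n)},\ell)$ produced by the preceding analysis, exhibiting for each real form a diagram $\Gamma^{(n)}$ and a labelling $\ell$ whose induced $\sigma$ recovers the given Cartan involution. This is a finite but intricate case-by-case verification, complicated by the need to treat the twisted diagrams $n\in\{2,3\}$ on the same footing as the untwisted ones and to distinguish carefully the outer from the inner character of $\tau$. As the introduction remarks, earlier attempts in the literature have missed cases by not separating the action on roots from the action on root \emph{vectors}, so the verification must be set up to prevent precisely that oversight.
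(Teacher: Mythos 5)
Your treatment of the first two assertions tracks the paper's argument closely. The paper likewise uses that $\sigma$ fixes $\ft^\C$ pointwise and commutes with $\tau$, hence preserves $\fg_1^\tau$ and therefore each one-dimensional simple affine root space, so that $\sigma$ acts by a sign $\ell(j)$ on $\fg^{\bar\alpha_j}$ and is determined by these signs because the $\fg^{\bar\alpha_j}$ generate $\fg^\C$; the constraint $\prod_j\ell(j)^{nm_j}=1$ comes, as in your bracket argument, from the relation $\sum_j nm_j\bar\alpha_j=(0,0)$ together with $\sigma|_{\ft^\C}=\mathrm{id}$ (the paper phrases this as additivity of $\ell$ on the affine root system with $\ell((0,0))=1$). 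For the equivalence statement the paper shows that any $\chi$ commuting with $\tau$ permutes $\bar B$ and hence induces a symmetry of $\Gamma^{(n)}$, and conversely extends any such symmetry to an automorphism by the standard isomorphism theorem. One caveat on your phrasing: the symmetries of the \emph{affine} diagram are not the ``outer part'' of the stabiliser of $\tau$ — for $\fa_r^{(1)}$ the rotation group $\Z_{r+1}$ of $\Gamma^{(1)}$ is realised by inner automorphisms of $\fg^\C$ that do not lie in $T^\C$ — so the dichotomy ``inner $=T^\C$, outer $=$ diagram symmetries'' is not literally correct, although the conclusion you draw from it is.

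The genuine gap is the final assertion. You correctly identify it as the crux but leave it as an unexecuted plan (``a finite but intricate case-by-case verification'' against Helgason's tables), which is not a proof and is, as you yourself warn, exactly the kind of computation where earlier authors went astray. The paper avoids it entirely: by Kac's classification of finite-order automorphisms (Helgason, Ch.~X, \S 5), every involution of $\fg^\C$ is conjugate to one of type $(s_0,\ldots,s_r;n)$ with $s_j\in\{0,1\}$ and $\sum_j nm_js_j=2$, and such an involution by construction fixes $\ft^\C$ pointwise and acts by $(-1)^{s_j}$ on $\fg^{\bar\alpha_j}$ — it is therefore already compatible with the Coxeter automorphism of $\Gamma^{(n)}$, and one simply sets $\ell(j)=(-1)^{s_j}$ (the condition $\sum_j nm_js_j=2$ gives $\prod_j\ell(j)^{nm_j}=1$, and some $s_j=1$ gives $-1\in\ell(I)$). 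No case-by-case matching is required; without invoking Kac's theorem in this form you would need to carry out the entire verification before your argument is complete.
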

The vertices of $\Gamma^{(n)}$ correspond to the simple affine roots, and the labels $\ell(j)$ are the eigenvalues of
$\sigma$ on the corresponding root space. Since those root spaces generate $\fg^\C$ under Lie bracket
this determines $\sigma$ completely, with the relation being the sole condition required to ensure that $\sigma$ is well-defined
as an automorphism of $\fg^\C$. The distribution
of signs within the Toda equations follows easily. Each labelling $\ell$ partitions $I$ into $I_+\cup I_-$ according to the
sign, and the corresponding affine Toda equations become
\begin{equation}\label{eq:Todanoncompact}
\Delta_\Sigma w_j = \sum_{k\in I_+}^r \hat C_{jk} e^{w_k} - \sum_{k\in I_-}^r \hat C_{jk} e^{w_k},\quad\ \sum_{j=0}^r m_jw_j=0.
\end{equation}

When $\Sigma$ is a torus these equations are the zero curvature conditions for a certain flat $G$-connection, where $G$ is
the real form determined by the choice of signs, and their solution
corresponds to a $\tau$-primitive map $\psi:\tilde\Sigma\to G/T$ which is equivariant with respect to the holonomy
representation $\pi_1\Sigma\to G$ of that flat connection.

When the compact Riemann surface $\Sigma$ is not a torus these equations need to be modified to fulfil the same purpose.
In the second half of the paper we derive these \emph{geometric Toda equations}. Motivated
by the relationship between Higgs bundles and harmonic maps, we take the point of view that the Toda equations are
really the equations for a metric, or equivalently a reduction of structure group, from a holomorphic principal $T^\C$-bundle $Q^\C$
over $\Sigma$ to a $T$-bundle $Q\subset Q^\C$. The equations governing the metric require an initial choice of Hermitian metric on
$\Sigma$ and the bundle $Q^\C$, together with a holomorphic section $\varphi$ of the associated bundle $Q^\C(\fg_1)\otimes
K_\Sigma$, where $\fg_1\subset\fg^\C$ is the direct sum of root spaces for the simple affine roots. 

Since $T^\C$ is abelian and its action on $\fg_1$ is completely reducible we can equate this pair $(Q^\C,\varphi)$
with a choice of line bundles
and holomorphic sections.  These choices determine the precise form of our geometric Toda equations.
\begin{defn}\label{defn:geomToda}
Fix a compact Riemann surface $\Sigma$ of genus $g$, equipped with a K\"{a}hler metric of constant curvature $2-2g$ and
let $\omega_\Sigma$ denote its K\"{a}hler form.
Fix an affine Dynkin diagram, with Cartan matrix $\hat C$, vertex index set $I=\{0,1,\ldots,r\}$ and simple affine roots
$\{\bar\alpha_k:k\in I\}$. Choose a vertex
labelling $\ell:I\to \{-1,1\}$ satisfying the conditions of Theorem \ref{thm:noncompact} and let
$I=I_+\cup I_-$ be the partition it induces.  Choose holomorphic 
line bundles $Q_k(\C)$, $k\in I$, over $\Sigma$ such that $\otimes_{k\in I} Q_k(\C)^{m_k}\simeq\uC$ and set $d_k=\deg(Q_k)$.  
For each $k\in I$ take the unique, up to constant scaling, Hermitian metric on $Q_k(\C)$ 
whose Chern connection
has curvature $-id_k\omega_\Sigma$. Finally, for each $k\in I$ choose a holomorphic section $\varphi_k\in H^0(Q_k(\C)\otimes K_\Sigma)$. 
Then the \emph{geometric Toda equations} for this data are the system of elliptic p.d.e.\
\begin{equation}\label{eq:geomToda}
\Delta_\Sigma w_j = \sum_{k\in I_+}\|\varphi_k\|^2\tfrac{|\alpha_k|^2}{2} e^{w_k}\hat C_{jk} - 
\sum_{k\in I_-}\|\varphi_k\|^2\tfrac{|\alpha_k|^2}{2} e^{w_k}\hat C_{jk} - d_j, \quad j\in I,
\end{equation}
for functions $w_j:\Sigma\to \R$ with $\sum_{j=0}^rm_jw_j=0$.
\end{defn}
We call the pair $(Q^\C,\varphi)$ a \emph{Toda pair}. We say it is  \emph{cyclic} when all $\varphi_k$ are non-trivial and
\emph{non-cyclic} otherwise. A particular case of the latter is the \emph{simple non-affine} case, for which $\varphi_k=0$
only for $k=0$, and we show that every non-cyclic case is a union of simple non-affine cases.
For the holomorphic section $\varphi_k$ to be non-trivial the degree $d_k$ must satisfy $d_k\geq 2-2g$.
Hence a necessary condition for the existence of cyclic pairs is that $g\geq 1$ and the vector $(d_0,\ldots,d_r)$ of degrees lies in
a bounded polytope
\begin{equation}\label{eq:intro_d_k}
2-2g\leq d_j,\ j=0,\ldots,r,\quad \sum_{k=0}^r m_kd_k=0.
\end{equation}
Note that for $g=1$ this polytope is a single point $(0,\ldots,0)$. Since $K_\Sigma\simeq\uC$ this forces $Q_k(\C)\simeq\uC$ to obtain non-trivial
$\varphi_k$ and leads to the more standard form of the Toda equations where all coefficients are constants.

Regarding existence of solutions, when $G$ is noncompact the only general method we are aware of is when we can treat
$(Q^\C,\varphi)$ as a \emph{principal pair} in the language of Bradlow et.\ al \cite{BraGM} (cf.\ Banfield \cite{Ban}).
We show that for cyclic Toda pairs this only applies when $\ell(j)=-1$ for $j=0,\ldots,r$. We call this case \emph{totally
noncompact} since it is the case where every affine simple root is noncompact (its root space lies in the
complexification of the noncompact summand $\fm$ in the Cartan decomposition $\fg =\fh\oplus\fm$).
There is precisely one such case for every affine Dynkin
diagram except $\fa_{2k}^{(1)}$, since this is the only case for which the relation $\prod_{j\in I}\ell(j)^{nm_j}=1$ cannot hold. 
It is not immediately obvious which symmetric spaces these are in the standard classification of, say, \cite[Ch.~X, Table V]{Hel} 
so we provide a list in Table \ref{table1}.

The geometric Toda equations for this totally noncompact case are
\begin{equation}\label{eq:TodaTotNon}
\Delta_\Sigma w_j + \sum_{k\in I}\|\varphi_k\|^2 \tfrac{|\alpha_k|^2}{2} e^{w_k}\hat C_{jk} +d_j 
= 0, \quad j\in I.
\end{equation}
In this case by checking the stability conditions for the Toda pair we prove the following (Prop.\ \ref{prop:cyclic} and
\ref{prop:non-affine}).
\begin{thm}\label{thm:exist}
Let $(Q^\C,\varphi)$ be a totally noncompact Toda pair. If it is cyclic 
then \eqref{eq:TodaTotNon} has a unique solution. If it is simple non-affine then a solution exists (and it is unique) whenever
the vector of degrees $(d_1,\ldots,d_r)$ lies in the convex polytope
\begin{equation}%\label{eq:Rdintro}
2-2g\leq d_j,\quad \sum_{k=1}^rR_{jk}d_k<0,
\end{equation}
where $R_{jk}$ are the entries to the inverse of the matrix whose entries are $\<\alpha_j,\alpha_k\>$.
In particular, this requires $g\geq 2$.
\end{thm}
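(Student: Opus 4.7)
The plan is to interpret the geometric Toda equations \eqref{eq:TodaTotNon} as the Hermitian--Einstein equations for the Toda pair $(Q^\C,\varphi)$ viewed as a principal pair in the sense of Bradlow--Garc\'ia-Prada--Mundet \cite{BraGM}, and then verify the stability hypothesis of their Hitchin--Kobayashi correspondence in each of the two cases. The totally noncompact hypothesis ($\ell(j)=-1$ for all $j$) is exactly what makes the right hand side of \eqref{eq:TodaTotNon} a uniformly positive sum; this is what identifies the p.d.e.\ with a moment map equation on the space of reductions $Q\subset Q^\C$ and puts $(Q^\C,\varphi)$ into the principal pair framework.

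For the cyclic case I would argue that stability is automatic. Since $T^\C$ is abelian, its parabolic reductions correspond to integral cocharacters $\chi\in X_*(T^\C)\otimes\R$, and such a $\chi$ is potentially destabilising only if $\<\chi,\bar\alpha_k\>\geq 0$ for every $k\in I$ with $\varphi_k\neq 0$. With all $\varphi_k$ nontrivial, the only nonnegative combinations of the affine roots that vanish are the positive multiples of $\sum_k m_k\bar\alpha_k$; equivalently, the only $\chi$ satisfying all these inequalities lies in the centre of $\ft^\C$. This is the direction along which the Toda pair is unchanged (cf.\ the constraint $\sum m_jw_j=0$), so $(Q^\C,\varphi)$ is polystable. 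An application of \cite{BraGM} then yields a unique solution to the moment map equation, which I would translate back to \eqref{eq:TodaTotNon} by decomposing $Q^\C$ into its constituent line bundles $Q_k(\C)$ and writing the Hermitian metrics as $h_k=e^{w_k}h_k^{(0)}$, with $h_k^{(0)}$ the reference metric of Definition \ref{defn:geomToda}.

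For the simple non-affine case only $\varphi_0=0$, so the $k=0$ factor decouples and the effective structure group is the torus $T_1^\C$ with coroot lattice generated by $\{\bar\alpha_j^\vee\}_{j=1}^r$. The cone of potentially destabilising cocharacters is then the positive combinations of the dual basis $\{\omega_j^\vee\}$, and the \cite{BraGM} stability inequality reduces to $\<\chi,\deg(Q^\C)\>+\mu_\varphi(\chi)<0$ for each generator $\omega_j^\vee$. Specialising these, using $\<\omega_j^\vee,\bar\alpha_k\>=\delta_{jk}$, transforms the inequalities into $\sum_{k=1}^r R_{jk}d_k<0$, where $R$ is the inverse of the Gram matrix $\<\alpha_j,\alpha_k\>$. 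The lower bound $d_k\geq 2-2g$ is simply the degree condition required for $\varphi_k\in H^0(Q_k(\C)\otimes K_\Sigma)$ to be nonzero. Stability on this open polytope, combined with \cite{BraGM}, gives existence; uniqueness follows from strict convexity of the associated Toda energy functional, or equivalently from the maximum principle applied to the difference of two solutions of \eqref{eq:TodaTotNon}.

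The principal obstacle is the careful accounting of stability for the abelian structure group $T^\C$. In particular, the affine relation $\sum m_k\bar\alpha_k=0$ forces a one-dimensional central direction where the moment map degenerates, and this must be handled via polystability rather than strict stability; one must check that this central direction is precisely the $\R$-factor killed by imposing $\sum m_jw_j=0$. A related technical point is the explicit verification that the Hermitian--Einstein equation of \cite{BraGM}, specialised to an abelian structure group and the representation $\bigoplus_{k\in I}\fg_{\bar\alpha_k}$, agrees with \eqref{eq:TodaTotNon} after writing the Chern curvature in terms of the $w_j$ and recognising the moment map as $\sum_k\|\varphi_k\|^2(|\alpha_k|^2/2)e^{w_k}\bar\alpha_k^\vee$.
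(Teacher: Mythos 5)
Your overall strategy is the paper's: recast \eqref{eq:TodaTotNon} as the Hermitian--Einstein equation for the $K_\Sigma$-twisted principal pair $(Q^\C,\varphi)$ (possible precisely because total noncompactness gives $\fg_1^\tau\subset\fm^\C$, so $\rho\varphi=-\kappa\varphi$), verify $0$-(poly)stability, and invoke \cite{BraGM,GarGM}. Your treatment of the simple non-affine case is essentially the paper's: the admissible antidominant characters are those lying in (minus) the closed Weyl chamber spanned by the fundamental coweights $\epsilon_j$, testing the degree inequality against each generator gives $\sum_k\<\epsilon_j,\epsilon_k\>d_k<0$, and $\<\epsilon_j,\epsilon_k\>=R_{jk}$ is the inverse Gram matrix; the bound $d_k\geq 2-2g$ is just nontriviality of $\varphi_k$. (The paper also records that the $B_\chi^0$ condition forces $\chi=0$ here, so polystable and stable coincide and uniqueness comes for free; your convexity/maximum-principle route to uniqueness is a workable alternative but is only sketched.)

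The gap is in the cyclic case. You assert that the characters $\chi$ satisfying the boundedness inequalities $\<\alpha_k,\chi\>\leq 0$ for all $k\in I$ form a one-dimensional ``central direction'' of $\ft^\C$, that the pair is therefore only polystable, and that this direction must be quotiented out by imposing $\sum m_jw_j=0$. None of this is right. Because $\sum_{k\in I}m_k\alpha_k=0$ with every $m_k>0$, any $\chi$ with $\<\alpha_k,\chi\>\leq 0$ for all $k\in I$ must have $\<\alpha_k,\chi\>=0$ for all $k$, and since $\alpha_1,\ldots,\alpha_r$ is a basis of $\Hom(i\ft,\R)$ this forces $\chi=0$: the set of potentially destabilising characters is empty, so the pair is (vacuously) $0$-\emph{stable}, not merely polystable. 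This matters for the statement being proved, since uniqueness in the Hitchin--Kobayashi correspondence is exactly what $0$-stability buys; for a merely polystable pair the solution is only unique up to the automorphism group of the pair, and your argument as written does not deliver the uniqueness claimed in the theorem. Relatedly, the constraint $\sum m_jw_j=0$ is not the removal of a central subtorus: the $w_j=\alpha_j(\log u)$ are $r+1$ functions of a single $i\ft$-valued function, so the relation holds identically, and since $G$ is adjoint and $\fg^\C$ is simple no nontrivial subtorus of $T^\C$ acts trivially on $\fg_1^\tau$ when all $\varphi_k\neq 0$. Once you replace the polystability discussion by the observation that stability is vacuous, the cyclic case closes exactly as in the paper.
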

For any Toda pair the existence of a solution to \ref{eq:geomToda} means there is an equivariant $\tau$-primitive harmonic
map $\psi:\tilde\Sigma\to G/T$ on the universal cover of $\Sigma$ (the equivariance is with respect to the holonomy
representation $\pi_1\Sigma\to G$ of the flat $G$-connection). It is a property of $\tau$-primitive maps that after homogeneous
projection $G/T\to G/H\simeq N$ we obtain an equivariant harmonic map $f:\tilde\Sigma\to N$ into the symmetric space. When
$G$ is noncompact this has a corresponding $G$-Higgs bundle. In \S \ref{sec:Higgs} we explain how the Toda pair
$(Q^\C,\varphi)$ is related to this Higgs bundle. We show that this Higgs bundle is a Hodge bundle (i.e., fixed by the
$\C^\times$-action on Higgs bundles) if and only if the Toda pair is non-cyclic. 

Section \ref{sec:Higgs} ends with an explanation of how Baraglia's
cyclic Higgs bundles\footnote{These were originally studied many years earlier by Aldrovandi \& Falqui \cite{AldF} although they were not called
``cyclic''. We call these ``Baraglia's'' to distinguish them from Collier's \cite{ColThesis} use of ``cyclic Higgs bundle'' which 
actually means cyclotomic in the sense of Simpson \cite{Sim09}. See also Remark \ref{rem:cyclotomic}.}
are the totally noncompact Toda pairs for an inner Coxeter automorphism in the
special case that $Q_j(\C)\simeq K_\Sigma^{-1}$, and hence $\varphi_j$ is a nonzero constant for $j=1,\ldots,r$, 
to obtain the system
\begin{equation}\label{eq:KToda}
	\Delta_\Sigma w_j + \sum_{k=1}^r \hat C_{jk}c_ke^{w_k} + \hat C_{j0}\tfrac{|\alpha_0|^2}{2}\|\varphi_0\|^2 e^{w_0} +d_j 
= 0, \quad j\in I,
\end{equation}
for $\varphi_0\in H^0(K_\Sigma^{m/n})$ and certain constants $c_k$.  These choices can be made 
even when $\tau$ is an outer Coxeter automorphism, providing  \emph{two} distinct systems of equations
for the Lie algebras $\fa_{2k-1}$, $\fd_r$ and $\fe_6$ (and three distinct versions for $\fd_4$). 

Existence of solutions to \eqref{eq:KToda} has been a topic of significant recent interest. For Bargalia's cases existence
when $g\geq 2$ is guaranteed by nonabelian Hodge theory: Hitchin's Higgs bundles are all stable.  
Miyatake \cite{Miy} provides an alternative existence theorem via his study of generalized Kazdan-Warner equations.
In fact all totally noncompact systems \eqref{eq:TodaTotNon} fit into his theory, providing an alternative proof of
Theorem \ref{thm:exist}. 
Solutions over noncompact domains, especially $\Sigma\simeq \C^\times$ and assuming rotational symmetry, have been
treated in \cite{GueL,GueIL,Moc,LiM1,LiM2}, motivated by the relationship with the $tt^*$ equations and
quantum cohomology (see, for example, \cite{Gue}, for a discussion of the motivation). 
Most of the existence results treat the $\fa_r^{(1)}$ equations
with an additional symmetry (which for $r=2k$ gives the same system as for $\fa_{2k}^{(2)}$),
but we note that the method of sub/super-solutions elucidated in \cite[\S 5.1]{LiM1}, derived from the method used in \cite{GueL}, can 
be applied to any totally
noncompact Toda system \eqref{eq:TodaTotNon} since the affine Cartan matrix has the property that $\hat C_{jk}<0$
whenever $j\neq k$. 

Beyond these results lies the question of the existence of solutions to the general noncompact Toda system \eqref{eq:Todanoncompact}. To my
knowledge  this has not been studied outside the $\fa_2^{(2)}$ case (see \cite{LofMsurv} for a survey) where the equation is 
effectively scalar. 
A surprising feature of the totally noncompact Toda systems \eqref{eq:TodaTotNon} over a compact
domain is that the norms $\|\varphi_j\|$ play no role in the existence theory (apart from whether or not they are identically zero).
One does not expect this to be the case more generally: one expects to need bounds on $\|\varphi_k\|$ whenever $k\in I_+$,
to control the terms with ``bad'' (i.e., positive) signs in \eqref{eq:Todanoncompact}. 

\smallskip\noindent
\textbf{Acknowledgments.} This work arose out of discussions  with Martin Guest on the topic of real forms of the
Toda equations while the author was
visiting him at Waseda University, Japan, during July 2022 as a Japan Society for the 
Promotion of Science Short Term Fellow. Discussions of an early draft also took place during a visit to Waseda University
in April 2024.  The author is grateful to JSPS for their support 
and to Martin for the stimulating discussions and critical feedback.

\section{Primitive harmonic maps.}
\subsection{Some reductive homogeneous geometry.}\label{ss:homog}

For what follows we will need the following facts about reductive homogeneous spaces of noncompact Lie groups
$G$. We will deal exclusively with real noncompact simple Lie groups $G$ which are not complex Lie groups. 
We will always choose $G$ to have trivial centre (i.e., to be the adjoint group).
Given a maximal compact subgroup $H<G$ the homogeneous space
$G/H$ can be given the structure of a noncompact symmetric space (these are the symmetric spaces of Type III in Cartan's
classification cf.\ \cite[Ch 5,Thm 5.4]{Hel}). Let $\fh\subset \fg$ denote the Lie algebras of $H<G$. The real form
$\fg\subset\fg^\C$ is determined by a real involution $\rho$ on $\fg^\C$ whose fixed point subalgebra is $\fg$. 
Write the corresponding Cartan decomposition as $\fg=\fh+\fm$. Let $\fu$ be a compatible compact real form, i.e., whose
real involution $\kappa$ commutes with $\rho$. Then $\fh = \fg\cap \fu$ and $\fm=\fg\cap(i\fu)$.

Let $T< H$ be any closed subgroup. Then 
its  Lie subalgebra $\ft\subset\fg$ is part of a reductive decomposition $\fg =
\ft+\fp$: since $\fh$ the Killing form $\<\ ,\ \>$ is non-degenerate on $\fg$ we can choose $\fp =
\ft^\perp$ to be the orthogonal complement with respect to the Killing form. 
Then $[\ft,\fp]\subset\fp$ and $G/T$ is a reductive homogeneous space whose tangent space at any
point is modelled on $\fp$.  
We equip $G/T$ with the $G$-invariant Riemannian metric given by restricting the $\Ad_H$-invariant Hermitian form
\begin{equation}\label{eq:mu}
\mu:\fg^\C\times\fg^\C\to \C,\quad \mu(\xi,\eta) = -\<\xi,\kappa\eta\>,
\end{equation} 
to $\fp$. When $G/H$ is equipped with the symmetric space metric given by the
restriction of $\mu$ to $\fm$ the  homogeneous projection $\pi:G/T\to G/H$ is a Riemannian submersion.

Let $\omega_G:TG\to\fg$ be the left Maurer-Cartan form and use the reductive decomposition of $\fg$ to write this as 
$\omega_\ft+\omega_\fp$. The first factor, $\omega_\ft$, provides the \emph{canonical
connection} for the right principal $T$-bundle $G\to G/T$, while the second factor is tensorial and therefore descends 
to a $1$-form $\beta:T(G/T)\to [\fp]$, where $[\fp] = G\times_T\fp$ is the
associated bundle over $G/T$. In fact $\beta$ is an isomorphism known as the \emph{Maurer-Cartan
form of $G/T$}. Now consider $[\fp]$ as a subbundle of $[\fg]$. The latter has canonical trivialisation 
\[
G\times_T \fg\simeq G/T\times \fg;\quad (g,\xi)_T\mapsto (gT,\Ad g\cdot\xi).
\]
This carries two
connections: the flat one $d$ from this trivialisation and the connection $D$ induced from $\omega_\ft$. It is shown in
\cite[Prop 1.1]{BurR} that $d = D+\beta$ and therefore the curvatures satisfy
\[
0=F^d = F^D + D\beta +\tfrac12[\beta\wedge\beta].
\]
Now $[\fg]=[\ft]+[\fp]$ and this splitting is $D$-parallel. Since $[\ft,\fp]\subset \fp$ when we project this equation onto
the subbundles $[\ft],[\fp]$ we obtain two equations
\begin{eqnarray*}
F^D + \tfrac12[\beta\wedge\beta]_\ft &=&0, \notag\\
D\beta + \tfrac12[\beta\wedge\beta]_\fp &=&0.\notag 
\end{eqnarray*}
Now let $M$ be any smooth manifold and
$\psi:M\to G/T$ a smooth map. Let $Q=\psi^{-1}G$ be the pull-back $T$-bundle over $M$ and set $\theta = \psi^*\beta$. For any
$\Ad_T$-invariant subspace $\fv\subset \fg$ define $Q(\fv) = Q\times_T \fv = \psi^*[\fv]$. This inherits the
connection $\nabla = \psi^*D$. Then we obtain what can be considered the structure equations for maps into $G/T$:
\begin{eqnarray}
F^\nabla + \tfrac12[\theta\wedge\theta]_\ft &=&0, \label{eq:struct1}\\
\nabla\theta + \tfrac12[\theta\wedge\theta]_\fp &=&0.\label{eq:struct2}
\end{eqnarray}
These are exactly the conditions for the connection $\nabla +\theta$ on $Q(\fg)$ to be flat. By considering the corresponding
flat connection on the principal $G$-bundle $Q\times_TG$ one obtains the following well-known theorem.
\begin{thm}\label{thm:struct}
Let $Q\to M$ be a principal $T$-bundle with connection $\nabla$ and $\theta\in\caE^1(Q(\fp))$ be such that
\eqref{eq:struct1} and \eqref{eq:struct2} both hold. Then there exists a smooth map $\psi:\tilde M\to G/T$ 
from the universal cover of $M$, unique up to isometries,
for which $\psi^*\beta =\theta$ and $\psi$ is equivariant with respect to a representation $\chi:\pi_1 M\to G$.
\end{thm}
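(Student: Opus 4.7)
The plan is to interpret the pair of structure equations as flatness of a single $\fg$-valued connection and then invoke the standard integration of flat principal bundles over a simply connected base. First I will set $\omega := \nabla + \theta$ and view it as a connection on the adjoint bundle $Q(\fg)$. Under the reductive splitting $\fg = \ft + \fp$, the bracket decomposes as $[\theta\wedge\theta] = [\theta\wedge\theta]_\ft + [\theta\wedge\theta]_\fp$ and the curvature of $\omega$ splits accordingly into
\begin{align*}
(F^\omega)_\ft &= F^\nabla + \tfrac12[\theta\wedge\theta]_\ft, \\
(F^\omega)_\fp &= \nabla\theta + \tfrac12[\theta\wedge\theta]_\fp,
\end{align*}
so \eqref{eq:struct1}--\eqref{eq:struct2} are exactly $F^\omega = 0$. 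Equivalently, $\omega$ is a flat connection on the associated principal $G$-bundle $P := Q\times_T G$, inside of which $Q$ sits as the $T$-reduction coming from the inclusion $T<G$.

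Next I will pull $P$ and $\omega$ back along the universal cover $\pi:\tilde M\to M$. Because $\tilde M$ is simply connected, the pulled back flat bundle $\tilde P$ admits a global parallel trivialization $\Phi:\tilde P \xrightarrow{\sim} \tilde M\times G$, unique up to post-composition with right translation by a fixed element of $G$. Deck transformations of $\pi$ preserve the flat structure and therefore act through $\Phi$ by right translations, producing a holonomy representation $\chi:\pi_1 M\to G$. The image $\Phi(\tilde Q)\subset \tilde M\times G$ is a $T$-subbundle, and since each fibre $\tilde Q_x$ is a single $T$-orbit it takes the form $\{x\}\times g_xT$, defining the required map $\psi:\tilde M\to G/T$ by $\psi(x) := g_xT$. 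Equivariance $\psi(\gamma\cdot x) = \chi(\gamma)\psi(x)$ is immediate from the fact that the $\pi_1 M$-action on $\tilde Q$ is the restriction of the action on $\tilde P$.

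Finally I will verify $\psi^*\beta = \theta$ and the uniqueness claim. Under $\Phi$ the connection $\omega$ becomes the flat connection $d$ on $\tilde M\times G$, and the reductive decomposition of its restriction to $\tilde Q$ must match, by the very definition of $\psi$, the pull-back along $\psi$ of the splitting $d = D + \beta$ of the Maurer-Cartan form of $G/T$ recorded just before the theorem. Reading off the $\ft$- and $\fp$-components then gives $\nabla = \psi^*D$ and $\theta = \psi^*\beta$. For uniqueness, the only undetermined datum was $\Phi$; replacing $\Phi$ by $\Phi\cdot g_0$ with $g_0\in G$ replaces $(\psi,\chi)$ by $(g_0^{-1}\psi, g_0^{-1}\chi g_0)$, which is the action of $g_0^{-1}$ on $\psi$ by a $G$-invariant isometry of $G/T$. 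The main obstacle I anticipate is the bookkeeping in the equality $\psi^*\beta = \theta$: one has to check that $\Phi$ genuinely intertwines the two reductive splittings. This reduces to the observation that both $\tilde Q\hookrightarrow\tilde P$ and $G\to G/T$ are $T$-reductions of a flat $G$-bundle, so once $\Phi$ sends one to the other the match between the $\ft$-components (canonical connections) and $\fp$-components (Maurer-Cartan forms $\theta$ and $\beta$) is forced.
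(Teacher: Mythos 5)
Your argument is correct and follows exactly the route the paper indicates: it only sketches this result, remarking that \eqref{eq:struct1}--\eqref{eq:struct2} are the flatness conditions for $\nabla+\theta$ and that one then considers the corresponding flat connection on $Q\times_T G$, which is precisely the construction you carry out in detail. Your fleshing out of the parallel trivialization over $\tilde M$, the identification $\psi^*\beta=\theta$ via the two $T$-reductions, and the uniqueness up to left translation is a faithful and complete expansion of that sketch.
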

The representation $\chi$ is the holonomy of the flat connection $\nabla+\theta$.

The application which interests us most is when $M$ is a Riemann surface $\Sigma$ 
and $\theta$ satisfies the conditions
\begin{equation}\label{eq:theta}
[\theta\wedge\theta]_\fp=0\ \text{and}\ \nabla''\theta'=0,
\end{equation}
where $T^\C\Sigma  = T'\Sigma\oplus T''\Sigma$ is the type decomposition determined by the complex structure on $\Sigma$.
Both of these conditions hold when $\psi$ is a $\tau$-primitive harmonic map, the definition of which 
we will now recall.

\subsection{$\tau$-primitive harmonic maps.}

Suppose we have an automorphism $\tau$ of 
$\fg^\C$ of finite order $m\geq 3$ which commutes with
$\rho$ and whose fixed points all lie in $\fh^\C$. It induces a $\Z_m$-grading on the complexified Lie algebra $\fg^\C$ 
which we will write as
\begin{equation}\label{eq:grading}
\fg^\C = \bigoplus_{k\in\Z_m} \fg^\tau_k, \quad [\fg^\tau_j,\fg^\tau_k]\subset \fg^\tau_{j+k}.
\end{equation}
Here $\fg^\tau_k$ is the eigenspace of $\tau$ for eigenvalue $e^{2\pi ik/m}$. Clearly $\rho$ maps $\fg^\tau_k$ to
$\fg^\tau_{-k}$. We define  $\ft=\fh\cap\fg_0^\tau$
to be the real Lie subalgebra $\ft\subset \fg$ with $\ft^\C=\fg^\tau_0$. Because $\tau$ has finite order $\fg^\tau_0$ is
a reductive subalgebra with complement $\oplus_{k\neq 0}\fg^\tau_k$. Since the latter is also $\tau$-invariant it is
the complexification of a complementary subspace $\fp\subset\fg$, and $\fg=\ft+\fp$ is a reductive decomposition.
Moreover, $\<\fg^\tau_j,\fg^\tau_k\>=0$ when $j+k\neq 0\in\Z_m$ \cite[Ch.\ X, Lemma 5.1]{Hel}, so $\fp=\ft^\perp$ with respect to the Killing form.
As above $T< H$ denotes the Lie subgroup with Lie algebra $\ft$ and $G/T$ is a reductive homogeneous space. 

The adjoint action of $T$ on each $\fg^\tau_k$ provides a bundle decomposition
\[
T^\C(G/T)\simeq [\fp^\C] = \bigoplus_{k\neq 0} [\fg^\tau_k],
\]
and the subbundle $[\fg^\tau_1]$ is called the \emph{primitive distribution} with respect to $\tau$.
\begin{defn}
A smooth map $\psi:\tilde\Sigma\to G/T$ of a Riemann surface $\Sigma$ is $\tau$-primitive when $\partial\psi:T'\Sigma\to T^\C(G/T)$
has its image in $[\fg^\tau_1]$.
\end{defn}
The crucial property here is that $[\fg^\tau_1,\rho(\fg^\tau_1)]=[\fg^\tau_1,\fg^\tau_{-1}]\subset \ft^\C$. This makes
$\tau$-primitive maps $F$-holomorphic with respect to the $G$-invariant horizontal $F$-structure on $G/T$ characterised by
the $\Ad_T$-invariant $F\in\End(\fp^\C)$ defined by
\begin{equation}\label{eq:Fstructure}
F = \begin{cases} 
\ i\ \text{on}\ \fg^\tau_1,\\
-i\ \text{on}\ \fg^\tau_{m-1},\\
\ 0\ \text{on}\ \fg^\tau_k\ \text{for}\ k\neq 1,m-1.
\end{cases}
\end{equation}
$F$-structures of this type were studied by Black \cite{Bla}. The following theorem is a special case of his results.
\begin{thm}[\cite{Bla}]\label{thm:Black}
A smooth map $\psi:\tilde\Sigma\to G/T$ of a Riemann surface which is $\tau$-primitive is harmonic. Further, 
let $\pi:G/T\to G/H$ be the homogeneous projection, 
then $f=\pi\circ\psi:\tilde\Sigma\to G/H$ is also harmonic.
\end{thm}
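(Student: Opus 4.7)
The plan is to exploit $\tau$-primitivity to kill the bracket obstruction in the structure equations \eqref{eq:struct1}--\eqref{eq:struct2}, then invoke Nomizu's formula for the Levi-Civita connection on $G/T$ to deduce harmonicity of $\psi$, and finally descend along $\pi$ using that the fibers $H/T$ are totally geodesic.

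For harmonicity of $\psi$, I would first verify conditions \eqref{eq:theta}. $\tau$-primitivity means $\theta'\in\Omega^{1,0}(\Sigma,Q(\fg^\tau_1))$ and $\theta''=\overline{\theta'}\in\Omega^{0,1}(\Sigma,Q(\fg^\tau_{-1}))$. Because $T'\Sigma$ is a line bundle the brackets $[\theta'\wedge\theta']$ and $[\theta''\wedge\theta'']$ vanish, while the surviving cross terms take values in $Q([\fg^\tau_1,\fg^\tau_{-1}])\subset Q(\ft^\C)$. Hence $[\theta\wedge\theta]_\fp=0$ and \eqref{eq:struct2} collapses to $\nabla\theta=0$; decomposing by Hodge type and using that $\nabla$ preserves each associated bundle $[\fg^\tau_k]$, the summands $\nabla''\theta'$ and $\nabla'\theta''$ lie in disjoint subbundles and must each vanish, giving $\nabla''\theta'=0$. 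A direct check using the splitting $\fp=(\fh\cap\fp)\oplus\fm$, the relations $[\fh,\fh]\subset\fh$, $[\fh,\fm]\subset\fm$, $[\fm,\fm]\subset\fh$, and the sign flips in \eqref{eq:mu} against the $\Ad_G$-invariance of the Killing form shows that $\mu|_\fp$ is naturally reductive, so Nomizu's formula gives $\nabla^{\mathrm{LC}}_X Y = D_X Y + \tfrac12[X,Y]_\fp$ on $G/T$. Pulled back under $\psi$ and evaluated in a conformal chart this reads $\nabla^{\mathrm{LC}}_{\bar z}(\partial_z\psi) = D_{\bar z}(\partial_z\psi) + \tfrac12[\partial_{\bar z}\psi,\partial_z\psi]_\fp$: the first term is $\nabla''\theta'(\partial_{\bar z},\partial_z)$ and the second lies in $\ft^\C$, so both vanish, proving that $\psi$ is harmonic.

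For the second statement, I would show that the fibers of $\pi:G/T\to G/H$ are totally geodesic, so that the composition formula for Riemannian submersions with totally geodesic fibers gives $\mathrm{tens}(\pi\circ\psi) = d\pi\bigl(\mathrm{tens}(\psi)\bigr) = 0$. The Cartan involution $\sigma=\rho\kappa$ preserves $T$ (since $T<H$ and $\sigma$ fixes $H$ pointwise), preserves the Killing form, and commutes with $\kappa$; hence it descends to an isometry of $(G/T,\mu|_\fp)$ whose differential at $eT$ acts as $+1$ on $\fh\cap\fp$ and $-1$ on $\fm$, making $H/T$ the fixed-point component through $eT$. Such components are totally geodesic in any Riemannian manifold, completing the argument.

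The main obstacle I anticipate is the natural-reductivity check in the middle of the first step: the sign flip in \eqref{eq:mu} between $\fh$ and $\fm$ prevents a one-line appeal to $\Ad_G$-invariance of the Killing form, and the identity must be verified separately across the three bracket cases $(\fh\cap\fp)\times(\fh\cap\fp)\to\fh$, $(\fh\cap\fp)\times\fm\to\fm$, and $\fm\times\fm\to\fh$. If this proves awkward, one can bypass natural reductivity entirely by observing that any Nomizu correction to the tension field contributes only through a term proportional to $[\theta,\theta]_\fp$, which we have already shown vanishes; only the bracket identity itself, not the precise form of the correction tensor, is truly required.
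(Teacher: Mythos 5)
First, a point of comparison: the paper does not prove Theorem \ref{thm:Black} at all --- it is quoted from Black \cite{Bla}, with \cite{BurP} offered as an alternative source using local frames --- so your argument is being measured against the standard proofs rather than a proof in the text. Your verification of the conditions \eqref{eq:theta} from $\tau$-primitivity is correct, and so is the observation that $H/T$ is the fixed-point component of the isometry of $G/T$ induced by $\sigma$ and hence totally geodesic. However, the central claim of your first step is false: $\mu|_\fp$ is \emph{not} naturally reductive. Take $X,Z\in\fm$ and $Y\in\fh\cap\fp$; then $\mu([X,Y]_\fp,Z)=\<[X,Y],Z\>$ (the sign of $\kappa$ on $\fm$) while $\mu(Y,[X,Z]_\fp)=-\<Y,[X,Z]\>$ (the sign of $\kappa$ on $\fh$), so their sum is $-2\<Y,[X,Z]\>$, which vanishes for all such triples only if $[\fm,\fm]\subset\ft$; since $\fg$ is simple, $[\fm,\fm]=\fh\supsetneq\ft$ whenever $\fh$ is nonabelian, which is the case here. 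So Nomizu's formula carries a nonzero symmetric correction $U$ with $2\mu(U(X,Y),Z)=\mu([Z,X]_\fp,Y)+\mu(X,[Z,Y]_\fp)$, and your fallback --- that any correction ``contributes only through a term proportional to $[\theta,\theta]_\fp$'' --- is also not true: $U$ is symmetric and is not a function of the bracket of its arguments. The step you are missing is that $U$ nevertheless vanishes on the arguments that occur: for $X\in\fg^\tau_{-1}$, $Y\in\fg^\tau_1$ and $Z\in\fg^\tau_k$ with $k\neq 0$, both pairings defining $\mu(U(X,Y),Z)$ involve $\<\fg^\tau_j,\fg^\tau_l\>$ with $j+l\not\equiv 0 \pmod m$ (using that $\sigma$ preserves the grading), hence vanish by \cite[Ch.~X, Lemma 5.1]{Hel}; so $U(\theta''(\partial_{\bar z}),\theta'(\partial_z))=0$ and harmonicity of $\psi$ follows.

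Your second step has a parallel gap. The composition formula is $\tau(\pi\circ\psi)=d\pi(\tau(\psi))+\tr_g(\nabla d\pi)(d\psi,d\psi)$, and totally geodesic fibres kill only the vertical--vertical part of the second term. Since $\fg^\tau_1$ is in general \emph{not} contained in $\fm^\C$ (only the totally noncompact labellings have that property), $d\psi$ has both horizontal and vertical components, and the mixed terms of $(\nabla d\pi)(d\psi,d\psi)$ are governed by the O'Neill tensor of the submersion, which is nonzero precisely because $[\fm,\fm]\not\subset\ft$. Those mixed terms do vanish, but again only by the grading: the relevant pairings $\<\fg^\tau_{\mp 1},[\fg^\tau_{\pm 1},\fm^\C]\>$ force the index sum into $\fg^\tau_0=\ft^\C$, which is excluded from $\fp$. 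A cleaner route, and the one implicit in the paper's framework, is to note that on the symmetric space the canonical connection is the Levi-Civita connection, that $f^*\beta_2=\theta_\fm$, and that the pulled-back connections differ by $\ad\theta_{\fh\cap\fp}$; then the tension field of $f$ is proportional to $\nabla''\theta'_\fm+[\theta''_{\fh\cap\fp}\wedge\theta'_\fm]$, whose second term lies in $[\fg^\tau_{-1},\fg^\tau_1]\cap\fm^\C\subset\ft^\C\cap\fm^\C=0$. In short, your conclusion is right, but both of the mechanisms you invoke (natural reductivity; totally geodesic fibres alone) are insufficient, and the actual engine of the proof is the $\Z_m$-grading together with the orthogonality of its graded pieces.
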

Our notation anticipates our application, in which $\tilde\Sigma$ will be the universal cover of a compact Riemann surface
$\Sigma$. 
\begin{rem}
In \cite[Lemma 5.4]{Bla}  a more general result is given, which asserts that both maps are equi-harmonic
(harmonic with respect to every $G$-invariant metric). But this level of generality requires the assumption that 
the $T^\C$ action on $\fp^\C$ decomposes
it into distinct irreducible summands. This assumption will not hold when we take $\tau$ to be an outer Coxeter
automorphism, since then $T^\C$ is not maximal abelian in $G^\C$ (see below). Nonetheless for our particular choice of
metric and $F$-structure Black's arguments apply. An alternative proof which applies to our situation, using local frames, 
can be found in \cite[Thm 3.7]{BurP}.
\end{rem}
In fact the map $f$ is weakly conformal harmonic, i.e., a minimal surface. 
\begin{prop}\label{prop:conformal}
The map $f=\pi\circ\psi$ obtained from Theorem \ref{thm:Black} is weakly conformal, i.e., $(f^*\mu)^{(2,0)}=0$ where $\mu$
denotes the metric on $G/H$.
\end{prop}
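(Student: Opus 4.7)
The plan is to reduce the $(2,0)$-vanishing of $f^*\mu$ to the fact that the primitive subspace $\fg^\tau_1$ is totally isotropic for the Killing form. First I would pull back the Maurer--Cartan form: set $\theta = \psi^*\beta \in \caE^1(Q(\fp))$ and split $\fp = \fh_1 + \fm$, where $\fh_1 = \ft^\perp \cap \fh$. Because $\pi : G/T \to G/H$ is a Riemannian submersion whose vertical distribution is $[\fh_1]$, only the $\fm$-component of $\theta$ contributes to $f^*\mu$, so it suffices to analyze $\mu(\theta_\fm, \theta_\fm)$.

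Next I would identify the metric on $\fm$ with the Killing form. Since $\fm = \fg \cap i\fu$, the compact involution $\kappa$ acts as $-1$ on $\fm$, so $\mu(\xi,\eta) = -\<\xi,\kappa\eta\> = \<\xi,\eta\>$ for $\xi,\eta \in \fm$. After extending $\C$-bilinearly to $\fm^\C$, the metric on $G/H$ is simply the Killing form, and the proposition reduces to showing $\<\pi_\fm X, \pi_\fm X\> = 0$ for $X = \theta'(Z) \in \fg^\tau_1$, where $\pi_\fm : \fp^\C \to \fm^\C$ is the projection with kernel $\fh_1^\C$ and $Z$ is any local section of $T'\Sigma$.

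The concluding step uses two ingredients. First, the Cartan decomposition $\fg^\C = \fh^\C \oplus \fm^\C$ is $\tau$-invariant, so $\fg^\tau_1$ splits as $(\fg^\tau_1 \cap \fh^\C) \oplus (\fg^\tau_1 \cap \fm^\C)$, and in particular $\pi_\fm X$ again lies in $\fg^\tau_1$. Second, the orthogonality $\<\fg^\tau_j,\fg^\tau_k\>=0$ whenever $j+k\neq 0 \in \Z_m$, recorded in the previous subsection and applied here with $j=k=1$ (using $m \geq 3$), gives $\<\fg^\tau_1,\fg^\tau_1\>=0$. Combining the two yields $\<\pi_\fm X, \pi_\fm X\> = 0$ as required.

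The main obstacle is the first ingredient above, namely the $\tau$-invariance of the Cartan decomposition, which amounts to $\tau$ commuting with the Cartan involution. In the classification that forms the core of the paper this is built into the notion of compatibility (Theorem \ref{thm:noncompact}); more generally, the standing hypothesis that the fixed subalgebra of $\tau$ sits inside the maximal compact $\fh$ allows one to choose a Cartan involution commuting with $\tau$, after which the argument above applies without modification.
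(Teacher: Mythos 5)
Your proposal is correct and follows essentially the same route as the paper: project $\partial\psi$ onto $[\fm^\C]$ via the splitting $\fp^\C=(\fh^\C\cap\fp^\C)\oplus\fm^\C$, observe that primitivity and the $\tau$-invariance of the Cartan decomposition keep the result in $[\fg^\tau_1]$, and conclude from the isotropy $\<\fg^\tau_1,\fg^\tau_1\>=0$. Your extra care in justifying why the Cartan decomposition is $\tau$-invariant (via the compatibility of $\sigma$ with $\tau$) is a point the paper's proof leaves implicit, but it is the same argument.
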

\begin{proof}
Since $f^*\mu = \psi^*\pi^*\mu$ we have
\[
f^*\mu^{(2,0)} = \<P_\fm\partial\psi,P_\fm\partial\psi\>,
\]
where $P_\fm:[\fp^\C]\to[\fm^\C]$ is the projection derived from the splitting $\fp^\C = (\fh^\C\cap\fp^\C)\oplus\fm^\C$.
Since $\psi$ is $\tau$-primitive $P_\fm\partial\psi$ takes values in $[\fg^\tau_1]$ and now $\<\fg^\tau_1,\fg^\tau_1\>=0$ 
implies $(f^*\mu)^{(2,0)}=0$.
\end{proof}
Now we can apply the theory of subsection \ref{ss:homog} to observe that a $\tau$-primitive map satisfies the equations
\begin{eqnarray}
 F^\nabla +[\varphi\wedge \rho\varphi]&=&0,\label{eq:F1}\\
\nabla''\varphi&=&0,\label{eq:F2}
\end{eqnarray}
where $\varphi = \partial\psi$, using the fact that $\rho\varphi = \bar\partial\psi$. Here we are using $\rho$ to also denote the real
involution on $Q(\fp^\C)$ fixing $Q(\fp)$. The second equation holds because 
$\theta = \varphi+\rho\varphi$ decomposes $\theta$ into $[\fg^\tau_1]$ and $[\fg^\tau_{-1}]$ components, which are independent and
$\nabla$-invariant. In particular, we have the following corollary of Theorem \ref{thm:struct}. 
\begin{cor}\label{cor:Q}
Let $Q\to \Sigma$ be a principal $T$-bundle over a Riemann surface $\Sigma$, with connection $\nabla$. Let
$K_\Sigma$ be the canonical bundle of $\Sigma$ and let $\varphi$ be a holomorphic section of $Q(\fg^\tau_1)\otimes K_\Sigma$ 
where the holomorphic structure is given by $\nabla''$. If $(\nabla,\varphi)$ satisfy
\eqref{eq:F1} and \eqref{eq:F2} then there exists a $\pi_1\Sigma$-equivariant $\tau$-primitive map $\psi:\tilde \Sigma\to G/T$ for which
$\partial\psi = \varphi$. It is unique up to isometries.
\end{cor}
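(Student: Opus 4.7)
The plan is to reduce this to Theorem \ref{thm:struct} by building a real $\fp$-valued 1-form $\theta$ from $\varphi$ and verifying the two structure equations \eqref{eq:struct1} and \eqref{eq:struct2}. Set
\[
\theta = \varphi + \rho\varphi,
\]
where $\rho$ denotes the conjugate-linear involution on $Q(\fg^\C)$ fixing $Q(\fg)$. Since $\rho$ swaps $\fg^\tau_1$ with $\fg^\tau_{-1}$ and conjugates the form degree, $\rho\varphi$ is a $(0,1)$-form valued in $Q(\fg^\tau_{-1})\otimes \bar K_\Sigma$, so $\theta$ is a $\rho$-real 1-form with values in $Q(\fp)$, decomposed into its $(1,0)$ and $(0,1)$ parts.

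The next step is to compute $[\theta\wedge\theta]$ and $\nabla\theta$ using this type decomposition. Because $\Sigma$ is one complex-dimensional, $\varphi\wedge\varphi = 0$ and $\rho\varphi\wedge\rho\varphi = 0$, so
\[
\tfrac12[\theta\wedge\theta] = [\varphi\wedge\rho\varphi].
\]
Since $\varphi$ is valued in $\fg^\tau_1$ and $\rho\varphi$ in $\fg^\tau_{-1}$, the bracket lies entirely in $\fg^\tau_0 = \ft^\C$ by the $\Z_m$-grading \eqref{eq:grading}. Hence $[\theta\wedge\theta]_\fp = 0$ automatically, and $\tfrac12[\theta\wedge\theta]_\ft = [\varphi\wedge\rho\varphi]$. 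Equation \eqref{eq:struct1} therefore reduces exactly to the hypothesis \eqref{eq:F1}. For \eqref{eq:struct2} with $[\theta\wedge\theta]_\fp = 0$ we just need $\nabla\theta = 0$. Splitting by type, $\nabla\varphi$ has no $(2,0)$ component so equals $\nabla''\varphi = 0$ by \eqref{eq:F2}; applying $\rho$ (which commutes with $\nabla$ since $\nabla$ reduces to a $T$-connection on $Q$) gives $\nabla'(\rho\varphi) = 0$, and $\nabla''(\rho\varphi)$ has type $(0,2)$, hence vanishes. Thus $\nabla\theta = 0$.

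With both structure equations verified, Theorem \ref{thm:struct} produces a map $\psi:\tilde\Sigma\to G/T$, equivariant with respect to the holonomy representation of the flat $G$-connection $\nabla + \theta$ on $Q\times_T G$, satisfying $\psi^*\beta = \theta$ and unique up to isometries of $G/T$. The $(1,0)$-part of $\psi^*\beta$ is $\partial\psi$, while the $(1,0)$-part of $\theta$ is $\varphi$, so $\partial\psi = \varphi$ takes values in $[\fg^\tau_1]$, showing $\psi$ is $\tau$-primitive.

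The only genuinely delicate point is bookkeeping: one must confirm that $\rho$ on $Q(\fg^\C)$ is parallel for $\nabla$ (which holds because $Q$ is a real $T$-bundle and the adjoint $T$-action preserves the real form), and that the sign and type conventions are consistent so that $[\theta\wedge\theta]_\fp$ really vanishes rather than cancelling against some spurious contribution. Beyond that, everything is a straightforward unpacking of Theorem \ref{thm:struct} plus the observation that on a Riemann surface there are no $(2,0)$ or $(0,2)$ forms in a wedge of two $(1,0)$- or $(0,1)$-forms respectively.
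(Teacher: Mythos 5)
Your proposal is correct and follows exactly the route the paper intends: the corollary is stated without explicit proof as a direct application of Theorem \ref{thm:struct}, obtained by setting $\theta=\varphi+\rho\varphi$ and reversing the derivation of \eqref{eq:F1}--\eqref{eq:F2} from the structure equations \eqref{eq:struct1}--\eqref{eq:struct2}, which is precisely what you carry out (including the key observations that $[\fg^\tau_1,\fg^\tau_{-1}]\subset\ft^\C$ kills $[\theta\wedge\theta]_\fp$ and that $\rho$ is $\nabla$-parallel).
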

Our particular application of Theorem \ref{thm:Black} and Corollary \ref{cor:Q} will be when $\tau$ is the Coxeter automorphism and $T$ its fixed-point subgroup. 
Then the Toda equations are the conditions that \eqref{eq:F1} and \eqref{eq:F2} hold.

\section{Coxeter automorphisms and noncompact Lie algebras.}\label{sec:Coxeter}

We now want to apply the results above to the situation where $\tau$ is what we will call the Coxeter automorphism for an
affine Dynkin diagram. This requires our noncompact real form be compatible $\tau$. To enable the classification of these, 
we fix first a compact real
form $\fu\subset\fg^\C$, with real involution $\kappa$. All other real forms are identified, up to conjugacy, by their Cartan
involution $\sigma\in\End(\fg^\C)$, for which $\rho=\sigma\kappa=\kappa\sigma$.
We will write $\fh^\C=\fg_0^\sigma$, $\fm^\C=\fg_1^\sigma$ when it becomes necessary to compare Cartan involutions.
Our strategy will be to describe the
Coxeter automorphism $\tau$ on $\fg^\C$ first and then explain which complex involutions $\sigma$ are compatible with it to provide the 
$\tau$-primitive distribution over $G/T$ such that $T< H$.

\subsection{The Coxeter automorphism for an affine diagram.}

To describe the Coxeter automorphism we need to summarise the relevant parts of the theory of affine root systems from 
\cite[Ch.~X, \S 5]{Hel}.
Given $\fg^\C$ as above let $\nu$ be an automorphism of the Dynkin diagram of $\fg^\C$. We include the identity
automorphism, so that $\nu$ has order $n\in\{1,2,3\}$. By fixing canonical generators we can construct an automorphism of $\fg^\C$
of order $n$, which we also call $\nu$, which represents this symmetry (see Appendix \ref{app:roots}).  
The eigenspaces of $\nu$ equip $\fg^\C$ with a $\Z_n$-grading which we will write as
\[
\fg^\C = \bigoplus_{j\in\Z_n} \fg^\nu_j,\quad \fg_j^{\nu} = \{\xi\in\fg^\C: \nu(\xi) = e^{2\pi ij/n}\xi\}.
\]
It can be shown that $\fg_0^\nu$ is a simple Lie subalgebra of $\fg^\C$.
Let $\ft^\C$ be a choice of Cartan subalgebra for $\fg^\nu_0$.
A root for $\fg^\C$ relative to $(\nu,\ft^\C)$ is a
pair $\bar\alpha=(\alpha,j)\in (\ft^\C)^* \times \Z_n$ for which
\[
\fg^{\bar\alpha} = \{\xi\in\fg^\nu_j:[h,\xi]=\alpha(h)\xi\ \forall\ h\in\ft^\C\}\neq \{0\}.
\]
Let $\bar R$ be the set of all these roots and $\bar R^0$ be the subset of those for which $\alpha=0$. 
Then $\fg^{\bar\alpha}$ is one-dimensional for $\bar\alpha\in \bar R-\bar R^0$, whereas  
$\ft^\C = \fg^{(0,0)}$ and the centralizer $\fz(\ft^\C)$ of $\ft^\C$ in $\fg^\C$ is a Cartan subalgebra of $\fg^\C$ with
\[
\fz(\ft^\C) = \bigoplus_{\bar\alpha\in\bar R^0}  \fg^{\bar\alpha}.
\]
Then 
\[
\fg^\C = \fz(\ft^\C) \oplus \left( \bigoplus_{\bar\alpha\in\bar R-\bar R^0} \fg^{\bar\alpha}\right).
\]

The Killing form is
non-degenerate on $\ft^\C$ and hence for every $\bar\alpha\in \bar R-\bar R^0$ there is a unique $h_\alpha\in\ft^\C$ for which
$\alpha = \< h_\alpha,\ \>$ and one defines
\[
\<\alpha,\beta\> = \<h_\alpha,h_\beta\>.
\]
For such roots $[\fg^{\bar\alpha},\fg^{-\bar\alpha}]=\C.h_\alpha$.
The following theorem summarises the results we need regarding these roots (see, e.g.\ Helgason \cite[Ch.~X,\S 5]{Hel}).
\begin{thm}\label{thm:affine}
Let $(\fg^\C,\nu,\ft^\C)$ be as above.
\begin{enumerate}
\item If $\alpha_1,\ldots,\alpha_r\in(\ft^\C)^*$ is a basis of simple roots for 
$\fg_0^\nu$ then $\bar R$ is generated by roots $\bar\alpha_0,\bar\alpha_1,\ldots,\bar\alpha_r$. Here  $\bar\alpha_j =
(\alpha_j,0)$ for $j\neq 0$ and 
\[
\bar\alpha_0=
\begin{cases}(\alpha_0,0)\ \text{when}\ n=1,\\
(\alpha_0,1)\ \text{when}\ n\neq 1, 
\end{cases}
\]
is characterised by the property that 
$\bar\alpha_0-\bar\alpha_j\not\in\bar R$ for $j\neq 0$. 
There are positive integers $m_0=1,m_1,\ldots,m_r$ such that $\sum_{j=0}^rm_j\alpha_j=0$, equally,
\[
\sum_{j=0}^r nm_j\bar\alpha_j = (0,0)\in (\ft^\C)^*\times \Z_n.
\]
\item The affine Cartan matrix for $(\fg^\C,\nu)$ is defined by
\[
\hat C_{jk} = 2\frac{\<\alpha_j,\alpha_k\>}{\<\alpha_k,\alpha_k\>},\quad 0\leq j,k\leq r.
\]
By the previous part it has corank $1$.
\end{enumerate}
\end{thm}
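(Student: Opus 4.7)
The plan is to handle this as a recap of Helgason's results on affine root systems, splitting the argument according to whether $\nu$ is trivial ($n=1$, the untwisted case) or not ($n\in\{2,3\}$, the twisted cases).

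For the untwisted case, $\fg_0^\nu=\fg^\C$ so $\bar R$ reduces to the ordinary root system of $\fg^\C$ with the trivial $\Z_1$-grading. Starting from simple roots $\alpha_1,\ldots,\alpha_r$ and the highest root $\theta=\sum_{j=1}^r m_j\alpha_j$, I would set $\bar\alpha_0=(-\theta,0)$. Then $m_0=1$ is forced, the relation $\sum_{j=0}^rm_j\alpha_j=0$ is immediate, and the characterization $\bar\alpha_0-\bar\alpha_j\not\in\bar R$ follows because $-\theta-\alpha_j$ is never a root (adding $\alpha_j$ to $\theta$ would exceed the highest root).

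For the twisted cases, I would first invoke standard folding results: $\fg_0^\nu$ is simple, each nontrivial $\fg_j^\nu$ is an irreducible $\fg_0^\nu$-module, and $\ft^\C$ is a Cartan subalgebra of $\fg_0^\nu$ whose centralizer in $\fg^\C$ provides the Cartan subalgebra for the full root decomposition. The $\bar\alpha_j=(\alpha_j,0)$ for $j\neq 0$ come from simple roots of $\fg_0^\nu$. To construct $\bar\alpha_0$, I would take $\alpha_0$ to be the negative of the highest weight of $\fg_1^\nu$ regarded as a $\fg_0^\nu$-module (this is the nontrivial graded piece that will produce the affine extension). The characterization $\bar\alpha_0-\bar\alpha_j\not\in\bar R$ is the statement that this weight is a lowest weight of the irreducible module in question, so subtracting a simple root of $\fg_0^\nu$ leaves the weight lattice of the module. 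The linear relation $\sum_{j=0}^r nm_j\bar\alpha_j=(0,0)$ then comes from combining the $\fg_0^\nu$-module weight dependence with the $\Z_n$-grading, since the $\Z_n$-component must sum to $0$ mod $n$, which is why the factor $n$ appears in the twisted cases.

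For part (2), once part (1) is established the argument is direct: pairing the relation $\sum_j m_j\alpha_j=0$ with $\alpha_k$ via the Killing form and dividing by $\tfrac12\<\alpha_k,\alpha_k\>$ gives $\sum_j m_j\hat C_{jk}=0$, exhibiting a nontrivial linear dependence among the rows and hence corank at least $1$. To show the corank is exactly $1$, I would observe that the principal $r\times r$ submatrix $(\hat C_{jk})_{1\le j,k\le r}$ is the ordinary Cartan matrix of $\fg_0^\nu$, hence invertible, because $\alpha_1,\ldots,\alpha_r$ are linearly independent in $(\ft^\C)^*$.

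The main obstacle lies in the twisted cases, where one must verify carefully that the highest weight of $\fg_1^\nu$ as a $\fg_0^\nu$-module produces exactly the right $\bar\alpha_0$ (with the marks $m_0=1, m_1,\ldots,m_r$ matching the tabulated affine Cartan data of $\Gamma^{(n)}$) and that no spurious roots spoil the characterization $\bar\alpha_0-\bar\alpha_j\not\in\bar R$; this is a case-by-case verification driven by the explicit folding of the Dynkin diagrams listed in Appendix \ref{app:diagrams}.
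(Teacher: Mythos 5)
The paper does not actually prove Theorem \ref{thm:affine}: it is stated explicitly as a summary of results quoted from Helgason \cite[Ch.~X, \S 5]{Hel}, so there is no internal proof to compare against, and your sketch is essentially the standard argument from that source (untwisted case via the highest root, twisted cases via the folding description of $\fg_1^\nu$ as an irreducible $\fg_0^\nu$-module, and corank exactly $1$ from the row relation $\sum_j m_j\hat C_{jk}=0$ together with invertibility of the Cartan matrix of $\fg_0^\nu$). The one imprecision is that in the twisted cases $\alpha_0$ should be defined as the \emph{lowest} weight of $\fg_1^\nu$ rather than the negative of its highest weight -- the two coincide here only because the modules occurring ($\fg_0^\nu$ of type $\fb_r$, $\fc_r$, $\ff_4$, $\fg_2$ acting on $\fg_1^\nu$) all happen to be self-dual, and it is the lowest-weight property that directly yields the characterisation $\bar\alpha_0-\bar\alpha_j\not\in\bar R$.
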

We will call $\bar B=\{\bar\alpha_0,\ldots,\bar\alpha_r\}$ a system of \emph{simple affine roots\footnote{This is a slight
abuse of usage, as the affine roots are usually the elements of $(\ft^\C)^*\times\Z$ which reduce to ours under
$\Z\mapsto \Z_n$.} for $\bar R$}.   
The corresponding affine diagram is said to be \emph{of type $n$}.
We will call the integers $m_0,\ldots,m_r$ the \emph{affine coefficients} of the affine diagram corresponding to $\hat C$.
When $\nu$ is the identity, and hence $\fg^\C=\fg_0^\nu$, $\alpha_1,\ldots,\alpha_r$ will be a basis of simple roots for
$\fg^\C$ and $\alpha_0$ will be the lowest root with respect to these.

Set 
\begin{equation}\label{eq:hj}
h_j=\frac{2}{\<\alpha_j,\alpha_j\>}h_{\alpha_j},\quad j=0,1,\ldots,r,
\end{equation}
so that
\[
\alpha_j(h_k) =\hat C_{jk}.
\]
Then we can choose elements $e_j\in\fg^{\bar\alpha_j}$, $f_j\in\fg^{-\bar\alpha_j}$ such that
we have the relations
\begin{equation}\label{eq:Weyl}
[e_j,f_k]=-\delta_{jk} h_{j},\quad [h_j,e_k]= \hat C_{kj}e_k,\quad [h_j,f_k]= -\hat C_{kj}f_k.
\end{equation}
This collection $\{h_j,e_j,f_j:j=0,\ldots,r\}$ is a system of generators for $\fg^\C$. 
Define $m=\sum_{j=0}^r nm_j$ and call this the \emph{Coxeter number} for $(\fg^\C,\nu)$. 
By assigning each generator $e_j$ the weight $1\in\Z_m$ and each $f_j$ weight $-1\in\Z_m$ one obtains 
a $\Z_m$-grading on $\fg^\C$. We will write this grading as
\[
\fg^\C = \bigoplus_{k\in\Z_m} \fg^\tau_k 
\]
and notice that 
\begin{equation}
\fg^\tau_0=\ft^\C,\quad \fg^\tau_1 = \Span_\C\{e_0,\ldots,e_r\},\quad \fg^\tau_{-1} = \Span_\C\{f_0,\ldots,f_r\}.
\end{equation}
\begin{defn}\label{defn:Cox}
The Coxeter automorphism for $(\fg^\C,\nu,\ft^\C,\bar B)$ is the order $m$ automorphism $\tau$ which has $\fg^\tau_k$ as its eigenspace 
of eigenvalue $e^{2\pi i k/m}$.
\end{defn}
In the language of Kac \cite{Kac} (cf.\ \cite[Ch.\ X]{Hel}) our Coxeter automorphism is the unique, up to conjugacy,
automorphism of type $(1,\ldots,1;n)$ on $\fg^\C$. In particular, $\tau$ is an inner automorphism when $n=1$ and 
an outer automorphism otherwise. Clearly there is exactly one Coxeter automorphism up to conjugacy for 
every affine Dynkin diagram (equally, for every affine Cartan matrix).

The next lemma gives an explicit expression for $\tau$ given $(\fg^\C,\nu,\ft^\C,\bar B)$. 
\begin{lem}\label{lem:Cox}
Let $x\in\ft^\C$ be the unique element for which $\alpha_j(x)=1$ for $j=1,\ldots,r$. Then the Coxeter automorphism for
$(\fg^\C,\nu,\ft^\C,\bar B)$ is given by 
\begin{equation}\label{eq:Cox}
\tau = \nu\exp(\tfrac{2\pi i}{m} \ad x) = \exp(\tfrac{2\pi i}{m} \ad x)\nu.
\end{equation}
\end{lem}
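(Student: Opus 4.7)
The plan is to verify that the automorphism $\tau'$ defined by the right-hand side of \eqref{eq:Cox} acts on the generators $\{e_j,f_j:j=0,\ldots,r\}$ with the eigenvalues $e^{\pm 2\pi i/m}$ that the defining grading \eqref{eq:Weyl}-\eqref{eq:Cox} assigns to them. Since these elements generate $\fg^\C$ as a Lie algebra and any algebra automorphism is determined by its action on a generating set, this will identify $\tau'$ with the Coxeter automorphism $\tau$ of Definition \ref{defn:Cox}.

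First I would note that the two expressions in \eqref{eq:Cox} actually agree: because $x\in\ft^\C\subset\fg^\nu_0$, the diagram symmetry $\nu$ fixes $x$, so $\nu$ commutes with $\ad x$ and hence with $\exp(\tfrac{2\pi i}{m}\ad x)$. Thus $\tau'$ is a well-defined automorphism.

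Next I would compute the action of $\tau'$ on each $e_j$. For $j=1,\ldots,r$ we have $\bar\alpha_j=(\alpha_j,0)$, so $e_j\in\fg^\nu_0$ is $\nu$-fixed, and $\ad x\cdot e_j=\alpha_j(x)e_j=e_j$ by the defining property of $x$. Hence $\tau' e_j = e^{2\pi i/m}e_j$, as required. The interesting case is $e_0$, and it splits according to $n$. If $n=1$ then $\bar\alpha_0=(\alpha_0,0)$ and $\nu e_0=e_0$; the affine relation $\sum_{j=0}^r m_j\alpha_j=0$ together with $m_0=1$ gives $\alpha_0(x)=-\sum_{j=1}^rm_j=1-m$, so $\tau' e_0 = e^{2\pi i(1-m)/m}e_0 = e^{2\pi i/m}e_0$. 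If $n\neq 1$ then $\bar\alpha_0=(\alpha_0,1)$ gives $\nu e_0 = e^{2\pi i/n}e_0$; projecting the relation $\sum_{j=0}^r nm_j\bar\alpha_j=(0,0)$ onto $(\ft^\C)^*$ yields $n\alpha_0=-\sum_{j=1}^r nm_j\alpha_j$, so $\alpha_0(x)=-\sum_{j=1}^rm_j=1-m/n$, and therefore
\[
\tau' e_0 = e^{2\pi i/n}\,e^{2\pi i(1-m/n)/m}e_0 = e^{2\pi i/m}e_0.
\]
The same calculation with signs reversed gives $\tau' f_j=e^{-2\pi i/m}f_j$ for all $j$, and $\tau'$ fixes every $h\in\ft^\C$ (since $\nu$ does and $\ad x$ annihilates $h$).

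Finally, as the $e_j$ are assigned weight $1\in\Z_m$ and the $f_j$ weight $-1\in\Z_m$ in the definition of the Coxeter grading, the eigenspace $\fg^\tau_k$ is spanned by iterated Lie brackets of total weight $k$; by the Leibniz-type property of $\tau'$ these eigenspaces coincide with the $e^{2\pi ik/m}$-eigenspaces of $\tau'$. Hence $\tau'=\tau$. The only genuine computation is the evaluation of $\alpha_0(x)$, which is the one place where the affine relation and the Coxeter number $m=\sum nm_j$ enter; the rest is bookkeeping on generators.
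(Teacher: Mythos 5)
Your proposal is correct and follows essentially the same route as the paper: both verify that $\nu\exp(\tfrac{2\pi i}{m}\ad x)$ fixes $\ft^\C$ and acts on each $e_j$ by $e^{2\pi i/m}$, with the key computation being $\alpha_0(x)=1-\tfrac{m}{n}$ from the affine relation (the paper treats $n=1$ and $n\neq 1$ in a single formula rather than splitting cases, but the arithmetic is identical). Your added remarks on why the two factorizations in \eqref{eq:Cox} agree and on the action on the $f_j$ are harmless elaborations of what the paper leaves implicit.
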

\begin{proof}
Clearly the right hand side of \eqref{eq:Cox} fixes $\ft^\C$ pointwise so it suffices to show 
that $\fg^\tau_1$ is an eigenspaces of the correct eigenvalue. 

By definition of $e_j$, $\nu(e_j)=e_j$ for $j=1,\ldots,r$ and $\nu(e_0) = \exp^{2\pi i/n}e_0$.
So for $j=1,\ldots,r$, since $[x,e_j]=\alpha_j(x)e_j=e_j$ 
\[
\nu\exp(\tfrac{2\pi i}{m} \ad x)(e_j) = \nu(e^{2\pi i/m}e_j) = e^{2\pi i/m}e_j.
\]
For $j=0$ we have $\alpha_0=-\sum_{j=1}^r m_j\alpha_j$ so 
\[
[x,e_0] = -\sum_{j=1}^r m_j e_0 = (1-\tfrac{m}{n})e_0.
\]
Therefore
\[
\nu\exp(\tfrac{2\pi i}{m} \ad x)(e_0) = \nu(e^{2\pi i(1-m/n)/m)}e_0) = e^{2\pi i/m}e^{-2\pi i/n}\nu(e_0) =e^{2\pi
i/m}e_0.
\]
\end{proof}
\begin{rem}\label{rem:innerouter}
The only Lie algebras whose Dynkin diagrams admit a non-trivial symmetry are $\fa_r$ ($r\geq 2$), $\fd_r $ ($r\geq 3$) and $\fe_6$. 
These will have one inner Coxeter automorphism, call it $\tau_\inn$, and at least one outer Coxeter
automorphism (with $\fd_4$ being the only case with two distinct outer Coxeter automorphisms). However, it is only for 
$\fa_{2k}$ that the outer Coxeter automorphism equals
$\nu\tau_\inn$. Indeed, this is the only case where the
eigenspace $\fg^\tau_1$ for the outer Coxeter automorphism is contained in that for $\tau_\inn$. This can be seen clearly
in Appendix \ref{app:roots} where  the root space $\fg^{\bar\alpha_0}$ is identified for each case.
\end{rem}

Finally, we want to understand the properties of elements of $\fg_1^\tau$. 
Consider the adjoint action of $T^\C$ on $\fg_1^\tau$. Write $\xi=\sum_{j=0}^r c_je_j$ for $c_j\in\C$. 
Then for every $h\in\ft^\C$
\[
\exp(\ad h)\cdot \xi = \sum_{j=0}^r c_je^{\alpha_j(h)} e_j.
\]
It follows that when $c_j\neq 0$ for all $j$ this orbit is the zero level set of the polynomial
\[
P(X_0,\ldots,X_r)=\prod_{j=0}^r X_j^{m_j}- \prod_{j=1}^r c_j^{m_j} ,
\]
where $X_j:\fg_1^\tau\to \C$ are the coordinates for which $X_j(\xi)=c_j$. Thus the orbit is Zariski closed when $c_j\neq
0$ for all $j$. If $c_j=0$ for any $j$ then  then the Zariski closure of the orbit contains $0$. By results 
from Vinberg \cite{Vin}, an element of $\fg_1^\tau$ is semisimple if and only if its $T^\C$-orbit is Zariski closed 
and nilpotent if and only if the Zariski closure of its orbit contains $0$. Thus we deduce the following.
\begin{lem}
An element $\sum_{j=0}^r c_j e_j \in \fg_1^\tau$ is semisimple if and only if $c_j\neq 0$ for all $j$ and nilpotent
otherwise. 
\end{lem}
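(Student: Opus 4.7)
My plan is to assemble the proof from the ingredients already laid out in the paragraph preceding the lemma, appealing to Vinberg's characterisation of semisimple and nilpotent elements in a graded Lie algebra: an element $\xi\in\fg_1^\tau$ is semisimple iff its $T^\C$-orbit is Zariski closed, and nilpotent iff the Zariski closure of its orbit contains $0$. Thus it suffices to analyse the orbit $T^\C\cdot\xi$ under the adjoint action $\exp(\ad h)\cdot\xi = \sum_j c_j e^{\alpha_j(h)}e_j$.

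First I would treat the case where all $c_j\neq 0$ and show that the orbit coincides with the affine hypersurface $V(P)=\{\eta=\sum c_j'e_j : \prod_{j=0}^r (c_j')^{m_j} = \prod_{j=0}^r c_j^{m_j}\}$. Containment of the orbit in $V(P)$ follows from $\sum_{j=0}^r m_j\alpha_j=0$, which makes $\prod (c_je^{\alpha_j(h)})^{m_j}$ independent of $h$. For the reverse inclusion, write $\lambda_j=c_j'/c_j$; since $\{\alpha_1,\ldots,\alpha_r\}$ is a basis of $(\ft^\C)^*$, for any choice of branches of $\log$ there is $h\in\ft^\C$ with $\alpha_j(h)=\log\lambda_j$ for $j=1,\ldots,r$, and then the relation $\alpha_0=-\sum_{j=1}^r m_j\alpha_j$ (which uses $m_0=1$) together with the constraint $\prod\lambda_j^{m_j}=1$ forces $e^{\alpha_0(h)}=\lambda_0$ automatically. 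Hence the orbit equals $V(P)$, which is Zariski closed, so by Vinberg $\xi$ is semisimple.

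Next I would handle the case where some $c_{j^*}=0$, producing a one-parameter subgroup in $T^\C$ that degenerates $\xi$ to $0$. Since removing any single simple affine root from $\{\alpha_0,\ldots,\alpha_r\}$ still leaves a basis of $(\ft^\C)^*$ (using $\sum m_k\alpha_k=0$ with all $m_k>0$, together with $m_{j^*}\neq 0$), I can choose $h\in\ft^\C$ such that $\alpha_j(h)=-1$ for every $j\neq j^*$. Then $\exp(t\,\ad h)\cdot\xi = \sum_{j\neq j^*} c_j e^{-t}e_j$ (the $j^*$-term vanishes since $c_{j^*}=0$), which tends to $0$ as $t\to +\infty$. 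Thus $0\in\overline{T^\C\cdot\xi}$, and Vinberg gives that $\xi$ is nilpotent.

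The two cases are mutually exclusive and exhaustive, so this also gives the converse implications (an element cannot be both semisimple and nilpotent unless it is zero, which fits the ``some $c_j=0$'' branch). The only real point to get right is the linear algebra in step one, ensuring that the relation $\sum m_j\alpha_j=0$ is exactly the obstruction to solving the system $e^{\alpha_j(h)}=\lambda_j$; the rest is bookkeeping and a direct appeal to Vinberg.
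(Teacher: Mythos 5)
Your proof is correct and follows essentially the same route as the paper: describe the $T^\C$-orbit of $\xi=\sum c_je_j$ explicitly, show it is the level set of $\prod X_j^{m_j}$ (hence Zariski closed) when all $c_j\neq 0$ and that its closure contains $0$ when some $c_j=0$, then invoke Vinberg's orbit criteria. The paper merely asserts the two orbit statements, so your verification of the reverse inclusion via the basis $\{\alpha_1,\ldots,\alpha_r\}$ and the explicit one-parameter degeneration is a welcome filling-in of detail rather than a different argument.
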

We will call elements of $\fg_1^\tau$ \emph{cyclic} when they are semisimple. When $\nu$ is the identity this agrees with Kostant's
definition \cite{Kos} since in that case the root system $\bar R$ is just the standard root system for $\fg^\C$ and
$\alpha_0=-\delta$ where where $\delta$ is the highest positive root relative to the root basis $\{\alpha_1,\ldots,\alpha_r\}$;  this
follows at once from part (1) of Theorem \ref{thm:affine}. It is useful and interesting to understand how the roots in
$\bar R$ are related to a standard root system for $\fg^\C$ when $\nu\neq 1$. This information can be found in 
Appendix \ref{app:roots}
\begin{rem}\label{rem:unique}
It follows from the discussion above that the Coxeter automorphisms are uniquely characterised amongst finite order
automorphisms by the two properties:
\begin{enumerate}
\item $\fg_0^\tau$ is abelian, and
\item $\fg_1^\tau$ admits a non-trivial semisimple element.
\end{enumerate}
\end{rem}

\subsection{Real forms compatible with the Coxeter automorphism.}

Fix the data $(\fg^\C,\nu,\ft^\C,\bar B)$ to obtain a Coxeter automorphism $\tau$.
By \cite[Ch.~X, Thm 5.2]{Hel} there is a $\tau$-invariant compact real form $\fu\subset \fg^\C$. We define 
\[
\ft=\ft^\C\cap \fu=\Span_\R\{ih_0,\ldots,ih_r\}.
\]
As before we denote by $\kappa$ the real involution which fixes $\fu$ pointwise and let $\mu$ be the Hermitian 
inner product \eqref{eq:mu}.
Since $\alpha(h_j)\in\R$ for all roots $\bar\alpha$ it follows that $\kappa:\fg^{\bar\alpha}\to\fg^{-\bar\alpha}$
Now using the same argument as \cite[Ch.~III, Thm 4.2]{Hel} it can be shown that $\fg^{\bar\alpha}$ is orthogonal to
$\fg^{\bar\beta}$ with respect to the Killing form whenever $\bar\alpha+\bar\beta\neq 0$, and
therefore distinct root spaces are orthogonal for $\mu$. Hence the generators $e_j,f_j$ will be $\mu$-orthogonal and
we may choose them so that $f_j=\kappa e_j$. 

Our aim is to classify all the noncompact real forms $\rho$ (equally, all $\sigma=\rho\kappa$) for which $\tau$ preserves 
$\fg$ and $\ft=\fg\cap\ft^\C$ is the $\tau$-fixed
torus in the maximal compact subspace $\fh\subset\fg$. Since $\kappa$ preserves $\ft^\C$ these properties hold
precisely when $\tau$ commutes with $\sigma$ and $\sigma$ fixes $\ft^\C$ pointwise. 
\begin{defn}
We will say a complex involution $\sigma$ is compatible with the Coxeter automorphism $\tau$ for $(\fg^\C,\nu,\ft^\C,\bar
B)$ when $\sigma$ commutes with $\tau$ and fixes $\ft^\C$ pointwise.
\end{defn}
\begin{lem}
A complex involution $\sigma$ is compatible with the Coxeter automorphism $\tau$ for $(\fg^\C,\nu,\ft^\C,\bar B)$ if and only if
it acts by scaling on every root space $\fg^{\bar\alpha}$ and fixes $\ft^\C$ pointwise.
\end{lem}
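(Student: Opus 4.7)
The plan is to prove the two implications by unwinding the formula $\tau=\nu\exp(\tfrac{2\pi i}{m}\ad x)$ from Lemma \ref{lem:Cox} and observing that each root space $\fg^{\bar\alpha}$ is a simultaneous eigenspace for $\ad h$ ($h\in\ft^\C$) and for $\nu$, hence for $\tau$.

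For the ($\Leftarrow$) direction, I would suppose $\sigma$ fixes $\ft^\C$ pointwise and acts by a scalar $\ell(\bar\alpha)\in\C^\times$ on every root space $\fg^{\bar\alpha}$. Since $x\in\ft^\C$, $\sigma$ fixes $x$, so $\sigma\exp(\tfrac{2\pi i}{m}\ad x)=\exp(\tfrac{2\pi i}{m}\ad x)\sigma$. Moreover, any map that is a scalar on each $\fg^{\bar\alpha}=\fg^{(\alpha,j)}$ commutes with $\nu$ on this space (both act by scalars there), and as these subspaces span $\fg^\C$ we conclude $\sigma\nu=\nu\sigma$. Combining the two, $\sigma\tau=\tau\sigma$, so $\sigma$ is compatible.

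For the ($\Rightarrow$) direction, assume $\sigma$ is compatible, so $\sigma$ fixes $\ft^\C$ pointwise and commutes with $\tau$. Since $\sigma(x)=x$, $\sigma$ commutes with $\exp(\tfrac{2\pi i}{m}\ad x)$; combining this with $\sigma\tau=\tau\sigma$ and the formula in Lemma \ref{lem:Cox} gives $\sigma\nu=\nu\sigma$. Thus $\sigma$ preserves the $\nu$-grading $\fg^\C=\oplus_j\fg_j^\nu$. Because $\sigma$ is an algebra automorphism fixing $\ft^\C$ pointwise, for $\xi\in\fg^{(\alpha,j)}$ and $h\in\ft^\C$ we have $[h,\sigma\xi]=\sigma[h,\xi]=\alpha(h)\sigma\xi$, so $\sigma$ also preserves the $\ad\ft^\C$-weight decomposition. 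The intersection of these two decompositions is exactly the root-space decomposition, so $\sigma$ preserves each $\fg^{\bar\alpha}$. For the generic roots $\bar\alpha\notin\bar R^0$ the space $\fg^{\bar\alpha}$ is one-dimensional (by Theorem \ref{thm:affine}), so $\sigma$ necessarily acts by a scalar; on $\fg^{(0,0)}=\ft^\C$ the scalar is $1$; and on any remaining $\fg^{(0,j)}\subset\fz(\ft^\C)$ with $j\neq 0$ the same preservation argument, together with the fact that $\sigma$ is an involution commuting with $\nu$ on the abelian subalgebra $\fz(\ft^\C)$, forces diagonal action on each $\nu$-eigenline of the Cartan, i.e.\ scaling on each $\fg^{(0,j)}$.

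The main obstacle is the last point: checking that $\sigma$ really acts by scaling on the higher-dimensional root spaces $\fg^{(0,j)}$ with $j\neq 0$ that can occur in the twisted cases ($n=2,3$). The clean argument here is that $\sigma$ restricted to the Cartan $\fz(\ft^\C)$ is an involution commuting with $\nu$ and equal to the identity on the $+1$-eigenspace $\ft^\C$; since these two commuting finite-order maps are simultaneously diagonalizable on the abelian $\fz(\ft^\C)$, and $\nu$ already separates $\fz(\ft^\C)$ into its (nontrivial) eigenlines under the $\nu$-action by a primitive $n$-th root of unity, $\sigma$ acts by $\pm 1$ on each such line; passing back to the $\nu$-eigenspace $\fg^{(0,j)}$ gives a scalar action. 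For the applications in \S\ref{sec:Coxeter} it is really this scalar action on the one-dimensional spaces $\fg^{\bar\alpha_j}$, $j\in I$, that matters — these scalars are precisely the labels $\ell(j)\in\{\pm 1\}$ that classify compatible involutions in Theorem \ref{thm:noncompact}.
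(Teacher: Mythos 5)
Your overall organisation via the formula $\tau=\nu\exp(\tfrac{2\pi i}{m}\ad x)$ runs close to the paper's argument. The converse direction is the same observation the paper makes (both $\nu$ and $\exp(\tfrac{2\pi i}{m}\ad x)$, hence $\tau$, act by scalars on each $\fg^{\bar\alpha}$, and operators that are scalar on every summand of a common decomposition commute). In the forward direction your deduction that $\sigma$ commutes with $\nu$, hence preserves both the $\nu$-grading and the $\ad\ft^\C$-weight decomposition and therefore each root space, is sound, and for $\bar\alpha\in\bar R\setminus\bar R^0$ one-dimensionality finishes it; this is a legitimate variant of the paper's route, which instead uses that $\sigma$ preserves $\fg_1^\tau$ together with the distinctness of $\alpha_0,\dots,\alpha_r$, and then handles \emph{all} remaining root spaces at once by a generation argument.

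The genuine gap is exactly where you flagged it: the spaces $\fg^{(0,j)}\subset\fz(\ft^\C)$ with $j\neq 0$. Your claim that $\nu$ ``separates $\fz(\ft^\C)$ into its eigenlines'' is false in general: $\fg^{(0,j)}$ is the whole $e^{2\pi ij/n}$-eigenspace of $\nu$ on the Cartan $\fz(\ft^\C)$, and for $\fa_{2r-1}^{(2)}$ with $r\geq 3$ one has $\dim\fg^{(0,1)}=r-1\geq 2$ (similarly for $\fa_{2r}^{(2)}$ and $\fe_6^{(2)}$). Simultaneous diagonalizability of $\sigma$ and $\nu$ on an abelian subalgebra only produces a common eigenbasis; it does not prevent $\sigma$ from acting with different signs on different lines inside the same $\fg^{(0,j)}$, so your argument does not establish scalar action there, and the lemma (and the subsequent definition of $\ell$ on all of $\bar R$) does require it. The fix is the paper's generation argument: each $\fg^{(0,j)}$ is spanned by iterated brackets of the $e_k,f_k$, on which $\sigma$ acts by products of the signs $\ell(\bar\alpha_k)$; a bracket lands in $\fg^{(0,j)}$ only if its net multiplicities are $cm_k$ with $c\equiv j\pmod n$, so the scalar produced is $\bigl(\prod_k\ell(\bar\alpha_k)^{m_k}\bigr)^{c}$, and this is independent of the bracket chosen because $\prod_k\ell(\bar\alpha_k)^{nm_k}=1$, which is itself forced by $\sigma$ fixing $\ft^\C=\fg^{(0,0)}$ pointwise. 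Replacing your eigenline step by this argument repairs the proof; the rest stands.
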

\begin{proof}
Suppose $\sigma$ is compatible with $\tau$.
Let $\xi\in \fg^{\bar\alpha}$ for $\bar\alpha=(\alpha,j)\in \bar R$. Since $\sigma(h)=h$ for all $h\in\ft^\C$ it follows
that $[h,\sigma(\xi)]=\alpha(h)\sigma(\xi)$ and therefore $\sigma:\fg^{\bar\alpha}\to g^{\bar\beta}$ where $\bar\beta =
(\alpha,j')$. Since $\sigma$ commutes with $\tau$ it preserves the eigenspace $\fg_1^\tau$, and since this is the sum of
root spaces for the simple affine roots $\bar\alpha_0,\ldots,\bar\alpha_r$ (for which $\alpha_0,\ldots,\alpha_r$ are all
distinct) it must preserve each $\fg^{\bar\alpha_j}$. Each of these is one dimensional and hence $\sigma$
acts by scaling (by $\pm 1$). Since every other root space is generated by these roots 
spaces $\sigma$ must act by scaling on every root space.

The converse is clear since $\tau$ also acts by scaling on the root spaces.
\end{proof}
It follows that $\sigma$ is completely determined by its sign $\pm 1$ on the root spaces $\fg^{\bar\alpha}$
and hence $\sigma$ is determined by a map
\begin{equation}\label{eq:ell}
\ell:\bar R \to \{-1,1\}, 
\end{equation}
satisfying
\begin{equation}\label{eq:ell2}
\ell(\bar\alpha+\bar\beta) = \ell(\bar\alpha)\ell(\bar\beta),\ \ell(\bar\alpha)\ell(-\bar\alpha) =1,\
\prod_{j=0}^r\ell(\bar\alpha_j)^{nm_j} = 1.
\end{equation}
The second condition on $\ell$ is equivalent to $\sigma\kappa=\kappa\sigma$.
We could replace the last two conditions by $\ell((0,0))=1$, but they are useful if we choose to describe $\ell$ by its
restriction to $\bar B$.
Clearly at least one sign must be negative on $\bar B$ to give a genuine involution. For example, when $\nu$ is
itself an involution then taking $\sigma=\nu$ corresponds to the map
\[
\ell(\bar\alpha_j) = \begin{cases}
1,\ j=1,\ldots,r\\
-1,\ j=0.
\end{cases}
\]
We will call a root $\bar\alpha$ \emph{compact} when $\ell(\bar\alpha)=1$, since this is equivalent to
$\fg^{\bar\alpha}$ lying in the complexification $\fh^\C$  of the compact summand in the Cartan decomposition of $\fg$. 
When $\ell(\bar\alpha)=-1$ we will say it is \emph{noncompact}.

Given two such maps $\ell,\ell'$ we obtain real forms $G,G'$ of $G^\C$ each containing $T$ in their maximal compact
subgroup. $\tau$-primitive maps into $G/T$ can be identified with those into $G'/T$ when there is an isomorphism $G/T\to G'/T$
identifying primitive distributions. The existence of a real group isomorphism $\phi:G\to G'$ is equivalent to the existence 
of a complex linear automorphism $\chi$ of $\fg^\C$ which maps $\fg$ to $\fg'$: $\chi$ is just the complex linear
extension of the tangent map to $\phi$ at the identity. The additional conditions require that $\chi$ preserves both
$\ft^\C$ and $\fg_1$, and since $\fg_1$ generates $\fg^\C$ this is equivalent to saying $\chi$ 
commutes with $\tau$. 
Now since $\chi$ maps the Cartan decomposition $\fg =\fh+\fm$ to a Cartan decomposition of $\fg'$ we
may, by altering $\chi$ by an inner automorphism if necessary, choose $\chi$ so that 
\[
\chi(\fh) = \fh' = \fg'\cap \fu,\quad \chi(\fm) = \fm'=\fg'\cap \fu.
\]
It follows that $\chi$ commutes with $\kappa$.
Thus the natural equivalence for $\rho,\rho'$ for our situation can be written in terms of $\sigma,\sigma'$, namely, that
there exists $\chi\in\Aut(\fg^\C)$ such that $\sigma'=\chi\sigma\chi^{-1}$ and $\chi$ commutes with $\tau$.
When these conditions holds we will say $\sigma,\sigma'$ are \emph{$\tau$-equivalent}.
This leads us to the following characterisation. 
\begin{prop}\label{prop:classification}
There is a one-to-one correspondence between complex involutions $\sigma$ which are compatible with the Coxeter
automorphism $\tau$ for $(\fg^\C,\nu,\ft^\C,\bar B)$ and additive maps $\ell$ of the type \eqref{eq:ell}. Two
such involutions $\sigma,\sigma'$ are $\tau$-equivalent if and only if $\ell,\ell'$ satisfy $\ell = \ell'\circ w$
where $w:\bar R\to \bar R$ is a symmetry of the affine root system which preserves $\bar B$. All such symmetries come
from symmetries of the affine Dynkin diagram.
\end{prop}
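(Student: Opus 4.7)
The plan is to address the three assertions in sequence: the bijection $\sigma\leftrightarrow\ell$, the characterisation of $\tau$-equivalence via a symmetry $w$, and the identification of such $w$ with affine Dynkin diagram symmetries.

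For the bijection, I would use the preceding lemma, which shows that any compatible $\sigma$ acts by a scalar on each root space $\fg^{\bar\alpha}$; since $\sigma^2=I$ these scalars lie in $\{-1,1\}$ and define $\ell$. Applying $\sigma[\xi,\eta]=[\sigma\xi,\sigma\eta]$ to $\xi\in\fg^{\bar\alpha}$, $\eta\in\fg^{\bar\beta}$ forces $\ell(\bar\alpha+\bar\beta)=\ell(\bar\alpha)\ell(\bar\beta)$ whenever $\bar\alpha+\bar\beta\in\bar R$, while $\kappa\sigma=\sigma\kappa$ combined with $\kappa:\fg^{\bar\alpha}\to\fg^{-\bar\alpha}$ yields $\ell(\bar\alpha)\ell(-\bar\alpha)=1$. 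Since $\bar R\subset(\ft^\C)^*\times\Z_n$ is generated by $\bar B$ modulo the single relation $\sum_{j=0}^r nm_j\bar\alpha_j=(0,0)$ (from Theorem \ref{thm:affine}), additive labels on $\bar R$ are in bijection with maps $\ell|_{\bar B}:\bar B\to\{-1,1\}$ satisfying $\prod_j\ell(\bar\alpha_j)^{nm_j}=1$. Conversely, given such $\ell$, one defines $\sigma$ to be the identity on $\ft^\C$ and scalar multiplication by $\ell(\bar\alpha)$ on $\fg^{\bar\alpha}$; the three conditions of \eqref{eq:ell2} are exactly what is needed to verify that $\sigma$ is a Lie algebra homomorphism, and $\sigma$ commutes with $\tau$ because both are diagonal in the common decomposition $\fg^\C=\ft^\C\oplus\bigoplus_{\bar\alpha\in\bar R-\bar R^0}\fg^{\bar\alpha}$ (with $\ft^\C=\fg_0^\tau$).

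For the equivalence, suppose $\sigma'=\chi\sigma\chi^{-1}$ with $\chi$ commuting with $\tau$. Then $\chi$ preserves $\fg_0^\tau=\ft^\C$, so restricts to a linear automorphism of $\ft^\C$; for $\xi\in\fg^{\bar\alpha}$ the identity $[h,\chi\xi]=\alpha(\chi^{-1}h)\chi\xi$ shows $\chi$ carries root spaces to root spaces, inducing an additive bijection $w:\bar R\to\bar R$. Because $\chi$ also preserves $\fg_1^\tau=\bigoplus_j\fg^{\bar\alpha_j}$ and the simple affine roots have pairwise distinct $\ft^\C$-weights (as $\alpha_1,\ldots,\alpha_r$ are a basis and $\alpha_0=-\sum m_j\alpha_j\neq\alpha_k$), $w$ must permute $\bar B$. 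Evaluating $\sigma'(\chi e_j)=\chi\sigma(e_j)=\ell(\bar\alpha_j)\chi e_j$ then gives $\ell'(w(\bar\alpha_j))=\ell(\bar\alpha_j)$, i.e.\ $\ell=\ell'\circ w$. Conversely, any permutation of $\bar B$ preserving the pairing $\hat C$ lifts to $\chi\in\Aut(\fg^\C)$ by the standard construction that builds automorphisms from symmetries of Chevalley generators, setting $\chi(e_j)=e_{w(j)}$ and $\chi(f_j)=f_{w(j)}$; such $\chi$ preserves both $\fg_0^\tau$ and $\fg_1^\tau$ and therefore commutes with $\tau$.

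The final assertion is then immediate: a linear symmetry of $\bar R$ permuting $\bar B$ is determined by its restriction to $\bar B$, which must preserve the affine Cartan matrix $\hat C$, and this is the definition of a symmetry of the affine Dynkin diagram. The main technical point demanding care is verifying that $\sum_{j=0}^r nm_j\bar\alpha_j=(0,0)$ is the \emph{only} relation among the simple affine roots inside $(\ft^\C)^*\times\Z_n$, so that the single product constraint on $\ell|_{\bar B}$ is both necessary and sufficient for $\sigma$ to extend to an involution of $\fg^\C$. This can be confirmed from the explicit descriptions of the affine root systems collected in Appendix \ref{app:roots}, or by noting that $\alpha_1,\ldots,\alpha_r$ are linearly independent in $(\ft^\C)^*$ so that the only $\Z$-linear combination of $\bar\alpha_0,\ldots,\bar\alpha_r$ vanishing in both components must be a multiple of the one already identified.
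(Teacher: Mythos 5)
Your argument follows essentially the same route as the paper's: the bijection $\sigma\leftrightarrow\ell$ is read off from the preceding lemma together with the single relation $\prod_{j}\ell(\bar\alpha_j)^{nm_j}=1$, and the $\tau$-equivalence is established by extracting the induced symmetry $w:(\alpha,j)\mapsto(\alpha\circ\chi^{-1},j)$ of $\bar R$ from a conjugating automorphism $\chi$ and, conversely, extending a diagram symmetry to $\chi\in\Aut(\fg^\C)$ by the standard isomorphism theorem for Chevalley-type generators. If anything your treatment of the first statement is more explicit than the paper's, which simply declares it clear from the preceding discussion.
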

\begin{proof}
The first statement is clear from the discussion above. Now suppose $\sigma' = \chi\sigma\chi^{-1}$ for some
$\chi\in\Aut(\fg^\C)$ which preserves $\ft^\C$ and $\fg_1$. Clearly $\chi:\fg_j^\sigma\to \fg_j^{\sigma'}$ and therefore
whenever $h\in\ft^\C$ and $\xi\in\fg^{\bar\alpha}$ we have 
\[
[h,\xi]=\alpha(h)\xi\ \implies [\chi(h),\chi(\xi)]=(\alpha\circ\chi^{-1})(\chi(h))\chi(\xi),
\]
so that $(\alpha,j)\in\bar R$ implies $(\alpha\circ\chi^{-1},j)\in\bar R$.
Since every automorphism of $\fg^\C$ leaves the Killing form invariant $\chi$ induces a symmetry of $\bar R$:
\[
w:\bar R\to\bar R;\quad (\alpha,j)\mapsto (\alpha\circ\chi^{-1},j),
\]
for which
\begin{equation}\label{eq:w}
\chi:\fg^{\bar\alpha}\to \fg^{w(\bar\alpha)}.
\end{equation}
Now  let $\xi\in\fg^{\bar\alpha}$ be non-zero, then
\begin{equation}\label{eq:ell'}
\ell(\bar\alpha)\xi = \sigma(\xi) = \chi^{-1}\sigma'(\chi\xi) = \ell'(w(\bar\alpha))\xi.
\end{equation}
Thus $\ell=\ell'\circ w$. Clearly $w$ must preserve $\bar B$ since $\chi$ preserves $\fg_1$. 

Conversely, suppose $\ell=\ell'\circ w$. A standard argument (cf.\ \cite[Ch.~IX, Thm 5.1]{Hel}) shows that $w$ extends to an
automorphism $\chi$ of $\fg^\C$ satisfying \eqref{eq:w}. Now rearranging \eqref{eq:ell'} shows that on every root space
$\sigma'=\chi\sigma\chi^{-1}$. Since the root spaces generate $\fg^\C$ as a Lie algebra, this equation holds on all of $\fg^\C$.
\end{proof}
When we combine this construction of compatible involutions with the classification of noncompact symmetric spaces in
\cite{Hel} we deduce that every noncompact symmetric space has at least one compatible Coxeter automorphism.
\begin{cor}\label{cor:compatible}
For every noncompact symmetric space $G/H$ of Type III there is at least one compatible Coxeter automorphism. 
\end{cor}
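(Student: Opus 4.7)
The plan is to exhibit, for every noncompact Type III real form $(\fg,\sigma)$, an affine Dynkin diagram $\Gamma^{(n)}$ together with data $(\nu,\ft^\C,\bar B)$ giving a Coxeter automorphism $\tau$ compatible with $\sigma$ in the sense of Proposition \ref{prop:classification}. The argument splits into two cases depending on whether $\sigma$ is inner or outer as an element of $\Aut(\fg^\C)$.

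In the inner case $\sigma$ is equal-rank, so a maximal torus $\ft\subset\fh$ complexifies to a Cartan subalgebra $\ft^\C$ of $\fg^\C$ itself. Taking $\nu=\mathrm{id}$ and letting $\alpha_1,\ldots,\alpha_r$ be the simple roots of $(\fg^\C,\ft^\C)$ with respect to any positive Weyl chamber, together with the lowest root $\alpha_0=-\delta$, produces a basis $\bar B$ of simple affine roots of type $\Gamma^{(1)}$. By Lemma \ref{lem:Cox} the Coxeter automorphism is $\tau=\exp(\tfrac{2\pi i}{m}\ad x)$ with $x\in\ft^\C$, so $\tau$ fixes $\ft^\C$ pointwise. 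Since $\sigma$ is inner and fixes $\ft^\C$ pointwise it has the form $\sigma=\Ad\exp(i\pi H)$ for some $H\in i\ft$, and both $\tau$ and $\sigma$ are $\Ad$-exponentials of commuting elements of the abelian subalgebra $\ft^\C$, so they commute and $\sigma$ is compatible with $\tau$.

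In the outer case $\sigma$ projects to a nontrivial element of the outer automorphism group of the Dynkin diagram, which has order $2$ since $\sigma^2=\mathrm{id}$. Pick $\nu$ an order-$2$ diagram automorphism in this class, realised as a canonical-generator automorphism of $\fg^\C$ as in Appendix \ref{app:roots}. Writing $\sigma=\nu\sigma_0$ with $\sigma_0$ inner, the relation $\sigma^2=\mathrm{id}=\nu^2$ forces $\sigma_0$ to commute with $\nu$, so after possibly conjugating $\sigma$ within its $\Aut(\fg^\C)$-orbit (which is the only datum the corollary cares about) we may assume $\sigma_0$ restricts to an inner involution $\sigma_0|_{\fg_0^\nu}$ of the simple subalgebra $\fg_0^\nu$. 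A case-by-case inspection of the three Dynkin types admitting order-$2$ outer symmetries ($\fa_r$, $\fd_r$, $\fe_6$) shows that this restriction is equal-rank on $\fg_0^\nu$, so we may pick a compact Cartan subalgebra $\ft^\C\subset\fg_0^\nu$ contained in $\fh^\C=(\fg^\C)^\sigma$, together with a basis $\bar B$ of simple affine roots for $(\fg^\C,\nu,\ft^\C)$. Lemma \ref{lem:Cox} gives $\tau=\nu\exp(\tfrac{2\pi i}{m}\ad x)$ with $x\in\ft^\C$, so $\tau$ fixes $\ft^\C$ pointwise; and since $\sigma$ commutes with $\nu$, fixes $\ft^\C$ pointwise, and commutes with $\exp(\tfrac{2\pi i}{m}\ad x)$ (because $\sigma_0$ is inner with fixed subalgebra containing $\ft^\C$), it commutes with $\tau$ and is compatible with it.

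The main obstacle is the equal-rank claim for $\sigma_0|_{\fg_0^\nu}$ in the outer case. This is handled most efficiently by tabulating Helgason's outer Type III real forms of $\fa_r$, $\fd_r$ and $\fe_6$ and matching each directly to a labelling $\ell$ of the corresponding affine diagram $\Gamma^{(n)}$ with $n=2$ (or, for a few extra $\fd_4$ forms, $n=3$), using the correspondence in Proposition \ref{prop:classification}. Equivalently, one can invoke the Vogan-diagram classification of noncompact real forms: the painted-vertex and arrow data of a Vogan diagram is exactly the affine labelling $\ell$ on $\Gamma^{(n)}$ satisfying $\prod_{j\in I}\ell(j)^{nm_j}=1$, so the Vogan classification already exhibits each Type III real form as arising from such an $\ell$, and hence from some compatible Coxeter automorphism.
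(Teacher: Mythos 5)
Your argument reaches the right conclusion and, in the end, leans on the same classification the paper does, but by a longer route. The paper's proof is a one-line appeal to the Kac--Helgason classification of involutions \cite[Ch.~X, \S 5]{Hel}: every involution of $\fg^\C$ is conjugate to one of type $(s_0,\ldots,s_r;n)$ with $\sum_j nm_js_j=2$, and such an involution by construction fixes $\ft^\C$ pointwise and acts by $(-1)^{s_j}$ on $\fg^{\bar\alpha_j}$, so setting $\ell(\bar\alpha_j)=(-1)^{s_j}$ immediately produces a compatible labelling. Your inner case is a clean, self-contained special instance of this (equal rank implies $\ft^\C$ is a Cartan subalgebra of $\fg^\C$, so $\sigma$ lies in the torus $\exp(\ad\ft^\C)$, which also contains the inner part of $\tau$), and that is a genuinely more elementary argument for that case. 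Your outer case, however, essentially re-derives the normal form underlying the classification and then concedes that the cleanest finish is to tabulate or cite it; at that point you have gained nothing over the paper's one-liner.

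Two specific inaccuracies to fix. First, $\sigma^2=\mathrm{id}$ with $\sigma=\nu\sigma_0$ gives $\nu\sigma_0\nu^{-1}=\sigma_0^{-1}$, not that $\sigma_0$ commutes with $\nu$; commutativity only follows once you know $\sigma_0$ is itself an involution, which is precisely what the normalization has to deliver. Your ``after possibly conjugating'' is therefore carrying all the weight, and its content is exactly the Kac normal form $\sigma=\nu\exp(i\pi\ad H)$ with $\nu(H)=H$ and $H$ in a $\nu$-stable Cartan subalgebra. Second, the combinatorial object you want to invoke at the end is the Kac diagram --- a labelling of the \emph{affine} diagram $\Gamma^{(n)}$ by $s_j\in\{0,1\}$ with $\sum_j nm_js_j=2$ --- not the Vogan diagram, which lives on the finite Dynkin diagram of $\fg^\C$; for $n=2,3$ the underlying finite diagram of $\Gamma^{(n)}$ is that of $\fg_0^\nu$ rather than of $\fg^\C$, so the two labellings are not the same data. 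With the citation corrected to Kac/Helgason your proof is sound, but it then coincides with the paper's.
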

\begin{proof}
First we recall from \cite[Ch.~X, \S 5]{Hel} that each symmetric space is determined by a involution of type $(s_0,s_1,\ldots,s_r;n)$,
obtained by labelling the $j$-th vertex of the affine Dynkin diagram  $\Gamma^{(n)}$ with $s_j\in\{0,1\}$ such that
$\sum_{j=0}^r nm_js_j=2$. The involution fixes $\ft^\C$ and acts by $(-1)^{s_j}$ on $\fg^{\bar\alpha_j}$. Thus we choose
$\ell$ to have $\ell(\bar\alpha_j) = (-1)^{s_j}$. 
\end{proof}
Finally, we can be quite explicit about what the involution is for a given map $\ell$. First, by \eqref{eq:ell2}
\[
\prod_{j=0}^r\ell(\bar\alpha_j)^{m_j} = \pm 1
\]
and can only equal $-1$ when $n=2$.
\begin{lem}\label{lem:h}
Let $\ell:\bar R\to \{-1,1\}$ satisfy \eqref{eq:ell2} and $\sigma$ be the involution determined by it.
Let $h\in\ft^\C$ be the unique element which satisfies, for $j=1,\ldots,r$,
\[
\alpha_j(h) = \begin{cases} 
0\ \text{when}\ \ell(\bar\alpha_j)=1,\\
1\ \text{when}\ \ell(\bar\alpha_j)=-1.\end{cases}.
\]
If $\prod_{j=0}^r\ell(\bar\alpha_j)^{m_j}=1$ then $\sigma = \exp(i\pi\ad h)$ and is hence an inner automorphism. 
If $\prod_{j=0}^r\ell(\bar\alpha_j)^{m_j}=-1$ then $\sigma = \nu \exp(i\pi\ad h)$ and is therefore an outer automorphism.
\end{lem}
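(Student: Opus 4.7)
The plan is to show both formulas agree with $\sigma$ on the simple affine root spaces $\fg^{\bar\alpha_j}$ for $j=0,\ldots,r$; by the argument used in Proposition \ref{prop:classification}, any automorphism which fixes $\ft^\C$ pointwise and acts on each $\fg^{\bar\alpha_j}$ by a scalar is determined uniquely by those scalars, so this comparison on the generators $e_0,\ldots,e_r$ is sufficient. Both $\sigma$ and each proposed expression are complex-linear Lie algebra automorphisms fixing $\ft^\C$ pointwise: for $\sigma$ by assumption; for $\exp(i\pi\ad h)$ because $\ft^\C$ is abelian and contains $h$; for $\nu$ because $\ft^\C$ is by definition a Cartan subalgebra of $\fg_0^\nu$.

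For $j\geq 1$ the verification is direct: $\bar\alpha_j = (\alpha_j, 0)$ forces $\nu(e_j) = e_j$, and the defining condition on $h$ (which reads $\alpha_j(h) = 0$ or $1$ according to $\ell(\bar\alpha_j) = 1$ or $-1$) gives
\[
\exp(i\pi\ad h)(e_j) = e^{i\pi\alpha_j(h)} e_j = \ell(\bar\alpha_j)e_j,
\]
matching $\sigma(e_j)$ whether or not $\nu$ is applied.

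The main step is $j=0$. Using $\alpha_0 = -\sum_{k=1}^r m_k\alpha_k$ (immediate from $\sum_{k=0}^r m_k\alpha_k = 0$ with $m_0=1$) one computes
\[
\exp(i\pi\ad h)(e_0) = e^{i\pi\alpha_0(h)}e_0 = \prod_{k=1}^r \ell(\bar\alpha_k)^{m_k}\, e_0,
\]
so the lemma's product condition $\prod_{j=0}^r\ell(\bar\alpha_j)^{m_j} = s$ (with $s\in\{\pm 1\}$) is exactly the statement $\exp(i\pi\ad h)(e_0) = s\,\ell(\bar\alpha_0)e_0$. Now one runs through the three possibilities for $n$. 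When $n=1$, \eqref{eq:ell2} forces $s=1$ and $\nu$ acts trivially on $e_0$, so the inner formula gives $\ell(\bar\alpha_0)e_0$ directly. When $n=3$, the identity $\ell(\bar\alpha_j)^{3m_j}=\ell(\bar\alpha_j)^{m_j}$ combined with \eqref{eq:ell2} again forces $s=1$, so only the inner formula is relevant and the same computation applies. When $n=2$, one has $\nu(e_0) = -e_0$; in the inner case ($s=1$) no $\nu$ is applied and $\exp(i\pi\ad h)(e_0) = \ell(\bar\alpha_0)e_0$ as before, whereas in the outer case ($s=-1$) the extra factor of $\nu$ supplies exactly the missing sign, so $\nu\exp(i\pi\ad h)(e_0) = -\prod_{k=1}^r\ell(\bar\alpha_k)^{m_k}\,e_0 = \ell(\bar\alpha_0)e_0$. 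The inner/outer labels are immediate because $\exp(i\pi\ad h)$ lies in the connected adjoint group, while $\nu$ represents a non-trivial outer class by construction.

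The principal obstacle is keeping track of the two sources of sign flexibility acting on $e_0$: the $\nu$-twist, which is non-trivial precisely when $n>1$ because $\bar\alpha_0$ lies in the nontrivial $\Z_n$-graded component, and the sign $s$ permitted by \eqref{eq:ell2}, which can equal $-1$ only when $n=2$. Once one observes that these two ambiguities combine compatibly only in the two configurations given by the lemma, the case analysis collapses into the two stated formulas.
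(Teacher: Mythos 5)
Your proof is correct and follows essentially the same route as the paper's: evaluate $\exp(i\pi\ad h)$ on the generators $e_j$, use $\alpha_0=-\sum_{k=1}^r m_k\alpha_k$ to get $\exp(i\pi\ad h)(e_0)=\bigl(\prod_{k=1}^r\ell(\bar\alpha_k)^{m_k}\bigr)e_0$, and insert $\nu$ to absorb the sign discrepancy in the outer case. Your version is slightly more careful than the paper's — it makes explicit that checking the generators suffices and runs through the $n=1,2,3$ cases separately — but the underlying argument is identical.
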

\begin{proof}
	For simplicity set $\ell_j=\ell(\bar\alpha_j)$. For $j=1,\ldots, r$ 
\[
\exp(i\pi\ad h)\cdot e_j = e^{i\pi\alpha_j(h)}e_j = \ell_je_j.
\]
Since $\alpha_0 = -\sum_{j=1}^r m_j\alpha_j$, when $\prod_{j=0}^r\ell_j^{m_j}=1$ we have
\[
\exp(i\pi\ad h)\cdot e_0 = e^{-i\pi\sum_{j=1}^rm_j\alpha_j(h)}e_j = \prod_{j=1}^r\ell_j^{-m_j}e_j.
\]
Hence when $\prod_{j=0}^r\ell_j^{m_j}=1$ we have $\exp(i\pi\ad h)\cdot e_0=\ell_0e_0$ and therefore
$\sigma=\exp(i\pi \ad h)$. 

When $\prod_{j=0}^r\ell_j^{m_j}=-1$ then $\exp(i\pi\ad h)\cdot e_0=-\ell_0e_0$. But in this case $\nu$ is an
involution with $\nu(e_j)=e_j$ for $j=1,\ldots,r$ and $\nu(e_0)=-e_0$, therefore $\sigma=\nu\exp(i\pi\ad h)$.
\end{proof}
An important consequence of this Lemma is that the symmetric space $G/H$ need not be an outer symmetric space when
the Coxeter automorphism is outer. 
\begin{exam}
As an illustration, consider the diagram $\fa_2^{(2)}$ (see Appendix \ref{app:diagrams}). 
The root system is
\[
\bar R = \{(0,0), (0,1), \pm\bar\alpha_0,\pm\bar\alpha_1,\pm(\bar\alpha_0+\bar\alpha_1)\},
\]
where $\bar\alpha_1=(\alpha_1,1)$ and $\bar\alpha_0=(-2\alpha_1,1)$. This diagram has three distinct labellings:
\begin{enumerate}
\item $(\ell_0,\ell_1)=(-1,1)$ with $\ell_0\ell_1^2=-1$. Hence $\sigma$ is an outer automorphism with fixed-point subalgebra
\[
\fh^\C=\fg^{(0,0)}\oplus \fg^{\bar\alpha_1}\oplus\fg^{-\bar\alpha_1}\simeq \fa_1.
\]
\item $(\ell_0,\ell_1)=(1,-1)$ with $\ell_0\ell_1^2=1$. Hence $\sigma$ is an inner automorphism with fixed-point subalgebra
\[
\fh^\C=\fg^{(0,0)}\oplus\fg^{(0,1)}\oplus \fg^{\bar\alpha_0}\oplus\fg^{-\bar\alpha_0}\simeq \fa_1\oplus\C.
\]
\item $(\ell_0,\ell_1)=(-1,-1)$ with $\ell_0\ell_1^2=-1$. Hence $\sigma$ is an outer automorphism with fixed-point subalgebra
\[
\fh^\C=\fg^{(0,0)}\oplus \fg^{\bar\alpha_0+\bar\alpha_1}\oplus\fg^{-\bar\alpha_0-\bar\alpha_1}\simeq \fa_1.
\]
\end{enumerate}
In the second case the real form $G$ is $PU(2,1)$ with inner symmetric space $\CH^2$, the complex hyperbolic plane.
The first and third cases yield isomorphic real forms $G\simeq G'\simeq PSL(3,\R)$, but there is no isomorphism $G/T\to
G'/T$ which identifies the respective primitive distributions $[\fg_1]$. These labellings give three inequivalent versions of the 
Toda equations, sometimes referred to as equations of Tzitzeica type, with very different geometries and solution
existence properties. In the order given above the equations govern the existence of, respectively, elliptic affine spheres in $\R^3$, 
minimal Lagrangian surfaces in $\CH^2$, and hyperbolic affine spheres in $\R^3$ (equally, convex $\RP^2$
structures). These are discussed quite extensively in the survey article \cite{LofMsurv}.
\end{exam}

\subsection{Totally noncompact pairs.}\label{subsec:totally}

We will say that the pair $(\tau,\sigma)$ is totally noncompact when $\fg_1^\tau\subset \fm$, which is equivalent to $\ell(\bar\alpha_j)=-1$
for $j=0,\ldots,r$. Clearly this determines $\ell$ completely and when $n=1,3$ \eqref{eq:ell2} requires $\sum_{j=0}^rm_j$ to
be even. Hence there exists precisely one totally noncompact pair for each affine Dynkin diagram except for
$\fa_{2k}^{(1)}$. The main reason for paying attention to the totally noncompact case is that it is precisely the case for which 
the theory of principal pairs can be applied to establish the existence of solutions to the Toda equations: 
this is done in \S\ref{sec:pairs}. Therefore it is worthwhile 
giving a classification of the symmetric spaces corresponding to totally noncompact pairs here. We represent
the symmetric spaces by their symmetric pairs $(\fg,\fh)$.
\begin{table}
\begin{center}
\begin{tabular}{|c|c|c|c|c|}
\hline
\text{Diagram} & $\fg$ & $\fh$ & $\rank(G/H)$ & \text{$\fg$ split?} \\ \hline
$\fa^{(1)}_{2k-1}$ & $\fsu(k,k)$ & $\fu(k)\oplus\fsu(k)$ & $k$ & \text{No} \\
$\fb^{(1)}_r$ &$\fso(r+1,r)$ & $\fso(r+1)\oplus \fso(r)$ & $r$ & \text{Yes}\\
$\fc^{(1)}_r$&$\fsp(r,\R)$ & $\fu(r)$ & $r$ & \text{Yes} \\
$\fd^{(1)}_{2k}$&$\fso(2k,2k)$ & $\fso(2k)\oplus\fso(2k)$ & $2k$ & \text{Yes} \\
$\fd^{(1)}_{2k+1}$&$\fso(2k+2,2k)$ & $\fso(2k+2)\oplus\fso(2k)$ & $2k$ & \text{No} \\
$\fe^{(1)}_6$&$\fe_{6(2)}$ & $\fsu(6)\oplus\fsu(2)$ & $4$ & \text{No} \\
$\fe^{(1)}_7$&$\fe_{7(7)}$ & $\fsu(8)$ & $7$ & \text{Yes} \\
$\fe^{(1)}_8$&$\fe_{8(8)}$ & $\fso(16)$ & $8$ & \text{Yes} \\
$\ff^{(1)}_4$&$\ff_{4(4)}$ & $\fsp(3)\oplus \fsu(2)$ & $4$ & \text{Yes} \\
$\fg^{(1)}_2$&$\fg_{2(2)}$ & $\fsu(2)\oplus \fsu(2)$ & $2$ & \text{Yes} \\
$\fa^{(2)}_{r}$ & $\fsl(r+1,\R)$ & $\fso(r+1)$ & $r$ & \text{Yes} \\
$\fd^{(2)}_{r}$&$\fso(r,r)$ & $\fso(r)\oplus\fso(r)$ & $r$ & \text{Yes} \\
%$\fd^{(2)}_{2k+1}$&$\fso(2k+1,2k+1)$ & $\fso(2k+1)\oplus\fso(2k+1)$ & $2k+1$ & \text{Yes} \\
$\fe^{(2)}_6$&$\fe_{6(6)}$ & $\fsp(4)$ & $6$ & \text{Yes} \\
$\fd^{(3)}_4$ & $\fso(4,4)$ & $\fso(4)\oplus\fso(4)$ & $4$ & \text{Yes} \\
\hline
\end{tabular}
\end{center}
\caption{Noncompact symmetric spaces for totally noncompact pairs.}\label{table1}
\end{table}
\begin{thm}
The noncompact symmetric spaces corresponding to totally noncompact pairs are those listed in Table
\ref{table1}. 
\end{thm}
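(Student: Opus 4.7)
The plan proceeds diagram by diagram through the affine Dynkin diagrams of Appendix \ref{app:diagrams}, in three stages.

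First, I would determine which diagrams admit the totally noncompact labelling $\ell\equiv -1$. By Theorem \ref{thm:noncompact} the constraint is $(-1)^{n\sum_j m_j}=1$, i.e.\ the Coxeter number $m=n\sum_j m_j$ must be even. A direct inspection of the affine coefficients shows this fails only for $\fa^{(1)}_{2k}$ (where $n=1$ and $\sum m_j=2k+1$ is odd), leaving exactly the 14 diagrams appearing in Table \ref{table1}.

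Second, for each admissible diagram I would write down the Cartan involution $\sigma$ explicitly using Lemma \ref{lem:h}. Let $h\in\ft^\C$ be the unique element with $\alpha_j(h)=1$ for $j=1,\ldots,r$. Then $\sigma=\exp(i\pi\ad h)$ when $\sum_j m_j$ is even, and $\sigma=\nu\exp(i\pi\ad h)$ otherwise. On any root space $\fg^{\bar\alpha}$ with $\bar\alpha=\sum_k n_k\bar\alpha_k$ the involution acts by an explicit sign $\epsilon_{\bar\alpha}\in\{\pm 1\}$ read off from the expansion of $\bar\alpha$ (with $\nu$ accounted for in the outer case), so the fixed-point subalgebra is
\[
\fh^\C = \ft^\C \;\oplus\; \bigoplus_{\epsilon_{\bar\alpha}=+1}\fg^{\bar\alpha}.
\]
Matching $(\fh^\C,\fm^\C)$ against the classification of Type III symmetric pairs in \cite[Ch.~X, Table V]{Hel} then identifies the real form $\fg$.

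For the classical families the identification is mechanical using the root-system data of Appendix \ref{app:roots}. For instance, in the $\fa^{(1)}_{2k-1}$ case the even-height roots span a $\fgl(k,\C)$ subalgebra of $\fsl(2k,\C)$, recovering $(\fsu(k,k),\fs(\fu(k)\oplus\fu(k)))$; analogous computations handle the $\fb,\fc,\fd$ families in both types $n=1,2$ and also $\fd^{(3)}_4$. The main obstacle is the exceptional cases $\fe^{(1)}_{6,7,8}$, $\ff^{(1)}_4$, $\fg^{(1)}_2$ and $\fe^{(2)}_6$, where $(\fh^\C,\fm^\C)$ is not immediately recognisable from its abstract description. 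For these I would compute the readily available invariants---$\dim\fh^\C$ (the number of even-sign roots plus $r$), the centre of $\fh^\C$, and whether the resulting real form is split---and match them against the small number of Type III real forms of each exceptional $\fg^\C$ in Helgason's table; these invariants are enough to pin down a unique entry, producing the remaining rows of Table \ref{table1}.
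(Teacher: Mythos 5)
Your proposal is correct in outline, and its first two stages (the parity condition $(-1)^{m}=1$ ruling out only $\fa^{(1)}_{2k}$, and the explicit formula $\sigma=\exp(i\pi\ad h)$ or $\nu\exp(i\pi\ad h)$ from Lemma \ref{lem:h}) coincide with the paper's setup. Where you genuinely diverge is in identifying the symmetric pair: you propose to compute $\fh^\C$ case by case for all fourteen diagrams, via root-system calculations for the classical families and invariant matching ($\dim\fh^\C$, centre, splitness) for the exceptional ones. The paper instead observes that the element $x$ with $\alpha_j(x)=1$ is the semisimple element of a principal three-dimensional subalgebra (Appendix \ref{app:roots} is needed to cover the outer diagrams, where the $\alpha_j$ are restrictions of simple roots of $\fg^\C$), so that $\exp(i\pi\ad x)$ is rotation by $\pi$ in the principal $\fsu(2)$ and $\sigma$ is exactly Hitchin's involution from \cite{Hit}. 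Since Hitchin's real form is split and each simple $\fg^\C$ has a unique split form, this identifies eleven of the fourteen rows at a stroke --- in particular every exceptional case except $\fe_6^{(1)}$ --- and only $\fa^{(1)}_{2k-1}$, $\fd^{(1)}_{2k+1}$ and $\fe^{(1)}_6$ (where $\sigma$ is forced to be inner while the split form of these algebras requires an outer involution) need the hand computation you describe. Your route buys uniformity and independence from \cite{Hit}; the paper's buys brevity and simultaneously establishes the last column of Table \ref{table1}. Two details to watch if you carry out your plan: for the outer diagrams the zero-weight root spaces $\fg^{(0,j)}$, $j\neq 0$, also carry a sign and must be counted when computing $\dim\fh^\C$; and in the $\fa^{(1)}_{2k-1}$ case the compact roots together with $\ft^\C$ generate $\fs(\fgl(k,\C)\oplus\fgl(k,\C))\simeq\fa_{k-1}\oplus\fa_{k-1}\oplus\C$, not a single $\fgl(k,\C)$, though your stated conclusion $(\fsu(k,k),\fu(k)\oplus\fsu(k))$ is the correct one.
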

The notation in Table \ref{table1} follows Helgason \cite[Ch.~X, Table V]{Hel}. In particular, $\fg_{r(\delta)}$ is the
noncompact real form of type $\fg$, rank $r$ and character $\delta$. This is a split real form (called \emph{normal} in \cite{Hel})
precisely when $r=\delta$. 
\begin{proof}
Since the involution $\sigma$ with $\ell(\bar\alpha_j)=-1$ for all $j=0,\ldots,r$ is unique, up to equivalence, when it exists
it is determined by its fixed point subalgebra $\fh^\C$.
Let $x\in\ft^\C$ be the unique element for which $\alpha_j(x)=1$ for $j=1,\ldots,r$. Then $x$ is the semisimple element in
a principal three dimensional subalgebra. For diagrams of type $1$ this is immediate from \cite[Lemma 5.2]{Kos} and for
the diagrams of type $2$ or $3$ it follows from the fact that, for $j=1,\ldots,r$, $\alpha_j$ is the restriction of a simple root to $\ft^\C$
(see Appendix \ref{app:roots}).  
By Lemma \ref{lem:h} the totally noncompact cases have involution $\sigma=\exp(i\pi\ad x)$ when $\sigma$ is inner (i.e, for
affine diagrams of type $1$ or $3$) and $\sigma=\nu\exp(i\pi\ad x)$ 
when $\sigma$ is outer. It is easy to check that
$\exp(i\pi\ad x)$ is rotation by $\pi$ in the associated principal $\fsu(2)$ subalgebra, whose semisimple element is $ix$. Hence these cases
include all the involutions introduced by Hitchin \cite[Remarks 6.11]{Hit}. Since the real form for Hitchin's involution is split and 
there is exactly one split real form for every simple Lie
algebra, this provides all the split real form cases in Table \ref{table1}. In particular, since $\fd_4^{(3)}$ cannot
produce an outer involution, its involution must be the same as $\fd_4^{(1)}$.

For those cases where $\sigma$ is not Hitchin's involution (i.e., $\fa^{(1)}_{2k-1}$, $\fd^{(1)}_{2k+1}$ and $\fe^{(1)}_6$)
we check $\fh^\C$ by hand by considering the root system generated by the roots with
$\ell(\bar\alpha)=1$.

For $\fa^{(1)}_{2k-1}$ it is easy to see that the roots with $\ell(\bar\alpha)=1$ provide two independent simple root systems, each of
type $\fa_{k-1}$, generated by roots of the form $\alpha_j+\alpha_{j+1}$, the distinct cases being where $j$ is odd or $j$ is even. Since
$\ft^\C$ has rank $2k-1$ we have $\fh^\C\simeq \fa_{k-1}\oplus\fa_{k-1}\oplus\C $.

To deal with $\fd^{(1)}_{2k+1}$ we note first that the roots $\alpha_j$, $j\neq 0$, generate a
$\fd_{2k+1}$ system. The root system for $\fd_r$ can be explicitly represented by
\[
R(\fd_{r}) = \{\alpha\in \Z^{r}:|\alpha|^2=2\},
\]
where the length is the standard Euclidean length in $\R^r$ (see, for example, \cite[Ch.\ X,\ \S 3]{Hel}). Let $\e_1,\ldots,\e_r$ 
be the standard basis for $\R^r$, then the generators for $R(\fd_r)$ are 
\[
\alpha_j = \e_j-\e_{j+1}\ (j=1,\ldots,r-1)\ ,\alpha_r=\e_{r-1}+\e_r,
\]
It follows that for $R(\fd_{2k+1})$ we get two independent subsystems generated by
\[
\{\alpha_{2j-1}+\alpha_{2j}:j=1,\ldots, k\}\cup\{\alpha_{2k-1}+\alpha_{2k+1}\}, \quad
\{\alpha_{2j}+\alpha_{2j+1}:j=1,\ldots, k-1\},
\]
which are therefore of type $\fd_{k+1}$ and $\fd_k$ respectively.
The affine roots
with $\ell(\bar\alpha)=1$ are generated by sums of pairs of simple roots, and therefore for $\fd^{(1)}_{2k+1}$ we have
$\fh^\C\simeq \fd_{k+1}\oplus\fd_k$.

For $\fe^{(1)}_6$ one checks that there are $16$ positive roots with $\ell(\bar\alpha)=1$ and these are generated by 
\begin{eqnarray*}
&\gamma_1=\alpha_3+\alpha_4,\ \gamma_2=\alpha_5+\alpha_6,\ \gamma_3=\alpha_2+\alpha_4,\ 
\gamma_4=\alpha_1+\alpha_3,\ \gamma_5=\alpha_4+\alpha_5,\\
&\gamma_6 = \alpha_2+\alpha_3+\alpha_4+\alpha_5.
\end{eqnarray*}
The first five roots generate a system of type $\fa_5$ and $\gamma_6$ is independent.
Thus $\fh^\C\simeq \fa_5\oplus\fa_1$.
\end{proof}
\begin{rem}
A real form $\fg$ is \emph{quasisplit} when the centralizer of a Cartan subspace of $\fm^\C$ is a Cartan
subalgebra of $\fg^\C$. This happens if and only if $\fm^\C$ contains a regular semisimple element of $\fg^\C$.
All the real forms in Table \ref{table1} are quasisplit because $\sum_{j=0}^r e_j$ is regular semisimple. However, there
are quasisplit real forms which do not appear in this table, namely $\fsu(k+1,k)$ and $\fso(2k+3,2k+1)$. 
\end{rem}

\section{Geometric Toda equations.}
In this section we will derive the geometric Toda equations as given in Definition \ref{defn:geomToda}.
From now on we assume that $\Sigma$ is a compact Riemann 
surface of genus $g$ and equip it with a metric of constant curvature
$2-2g$, so that the  K\"{a}hler form satisfies $\int_\Sigma \omega_\Sigma =2\pi$. 
Motivated by Higgs bundles and more generally principal pairs \cite{Ban,BraGM,GarGM}, we want to view equations \eqref{eq:F1} and
\eqref{eq:F2} as equations on the gauge orbit of an $T$-connection $\nabla$ in a holomorphic $T^\C$-bundle $Q^\C$, given
$\varphi\in H^0(Q^\C(\fg_1)\otimes K_\Sigma)$. The gauge group is $\caG=C^\infty(\Sigma,T^\C)$ and these equations will be our geometric Toda 
equations: they are the equations which
ensure that the reduction of structure group $Q\subset Q^\C$ corresponds to a $\tau$-primitive harmonic map using
Corollary \ref{cor:Q}.  Since $T$ acts unitarily on $\fg^\tau_1$
when it is equipped with the metric $\mu$ from \eqref{eq:mu}, a reduction of structure group $Q\subset Q^\C$
equips the holomorphic bundle $Q^\C(\fg^\tau_1)$ with a Chern connection.
\begin{rem}
Note that any complex automorphism of $\fg^\C$ which fixes $\ft^\C$ pointwise, such as $\tau$ or $\sigma$, induces a 
holomorphic automorphism on $Q^\C(\fg^\C)$ which we will denote by the same name. For every reduction of structure group
$Q\subset Q^\C$ the bundle $Q(\fg^\C)$ also inherits the real involutions $\rho,\kappa$.
What follows also applies when $Q^\C$ is a holomorphic $(T')^\C$-bundle where $T'<  T$. Such a reduction can apply when
considering Toda solutions with additional symmetry. Henceforth we will assume that this is understood.
\end{rem}

Given $(Q^\C,\varphi)$, choose an initial $T$-bundle $Q_0\subset Q^\C$. Let $\|\cdot\|$ denote the norm of this initial
metric and $\nabla$ denote the corresponding Chern connection.
The action of the gauge group $\caG$ on connections is such that, for $s\in\caG$,
$s\cdot\nabla$ is the Chern connection compatible with $s\nabla''s^{-1}$. Specifically this means
\[
s\cdot\nabla = \nabla -(s^{-1}\bar\partial s +\rho(s^{-1}\partial s)).
\]
Here $d=\partial+\bar\partial$ is the type decomposition of the exterior derivative on $\Sigma$. To simplify notation
define $\bar s = \rho(s)^{-1} =\kappa(s)^{-1}$, and then we have
\[
s\cdot\nabla = \nabla - (\bar\partial\log s - \partial\log \bar s).
\]
Note that $\Ad s\cdot \varphi$ is holomorphic with respect to $(s\cdot\nabla)''$. Hence
\[
F^{s\cdot\nabla} = F^\nabla - 2\partial\bar\partial\log(s\bar s),
\]
and note that $2\partial\bar\partial = i*\Delta_\Sigma$. Now observe that
\[
[\Ad s\cdot\varphi\wedge\rho(\Ad s\cdot\varphi)] = [\Ad s\bar s\cdot \varphi\wedge \rho\varphi],
\]
since $T^\C$ is abelian. From now on set $u=s\bar s$. Since $\rho=\kappa$ on $T^\C$ it follows that $u:\Sigma\to \exp(i\ft)$
and $\log(u)$ is well-defined.
Using the root spaces we may decompose $Q^\C(\fg^\tau_1) = \oplus_{j=0}^r Q^\C(\fg^{\bar\alpha_j})$ and
hence write $\varphi = \sum_{j=0}^r \varphi_j$ for $\varphi_j\in H^0(Q^\C(\fg^{\bar\alpha_j})\otimes K_\Sigma)$.  
\begin{lem}
For $0\leq j\leq r$,
\begin{equation}
[\Ad u\cdot\varphi_j\wedge\rho\varphi_j] = ie^{\alpha_j(\log u)}\|\varphi_j\|^2\ell_j\frac{|\alpha_j|^2}{2}h_j\omega_\Sigma,
\end{equation}
where $\ell_j=\ell(\bar\alpha_j)$ determine the real involution $\rho$.
\end{lem}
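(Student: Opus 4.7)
The plan is to reduce the identity to a pointwise computation by picking a local trivialization of $Q^\C$ in which $\varphi_j = e_j\otimes \phi_j$ for some local $(1,0)$-form $\phi_j$. Since $T^\C$ acts on $\fg^{\bar\alpha_j}$ with character $e^{\alpha_j(\cdot)}$, one immediately has $\Ad u\cdot\varphi_j = e^{\alpha_j(\log u)}\varphi_j$; note that $\alpha_j(\log u)\in\R$ because $\log u\in i\ft$ and roots are real-valued on $i\ft$. Next, because $\sigma$ acts by $\ell_j$ on $\fg^{\bar\alpha_j}$, because $\ell(-\bar\alpha_j) = \ell(\bar\alpha_j) = \ell_j$ by \eqref{eq:ell2}, and because our normalization is $\kappa(e_j)=f_j$, one gets $\rho(e_j) = \sigma\kappa(e_j) = \sigma(f_j) = \ell_j f_j$, hence $\rho\varphi_j = \ell_j f_j\otimes\bar\phi_j$.

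Combining these with the bracket relation $[e_j,f_j]=-h_j$ from \eqref{eq:Weyl} (the scalar factors pull outside the bracket since $T^\C$ is abelian) yields
\[
[\Ad u\cdot \varphi_j \wedge \rho\varphi_j] \;=\; -\,e^{\alpha_j(\log u)}\,\ell_j\, h_j\,\bigl(\phi_j\wedge\bar\phi_j\bigr).
\]
Two elementary conversions now finish the proof. First, a direct computation in an isothermal coordinate gives the pointwise identity $\phi_j\wedge\bar\phi_j = -i|\phi_j|^2\omega_\Sigma$ for any $(1,0)$-form. Second, one must relate $|\phi_j|^2$ to $\|\varphi_j\|^2$ via $\|\varphi_j\|^2 = \mu(e_j,e_j)\,|\phi_j|^2$. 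Here $\mu(e_j,e_j) = -\<e_j,\kappa e_j\> = -\<e_j,f_j\>$, and the standard root-space identity $[e_\alpha,e_{-\alpha}] = \<e_\alpha,e_{-\alpha}\>\,h_\alpha$ combined with $[e_j,f_j] = -\tfrac{2}{|\alpha_j|^2}h_{\alpha_j}$ forces $\<e_j,f_j\> = -2/|\alpha_j|^2$. Therefore $\|e_j\|^2 = 2/|\alpha_j|^2$ and $|\phi_j|^2 = \tfrac{|\alpha_j|^2}{2}\|\varphi_j\|^2$.

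Substituting the two conversions produces exactly
\[
[\Ad u\cdot \varphi_j\wedge\rho\varphi_j] \;=\; ie^{\alpha_j(\log u)}\|\varphi_j\|^2\ell_j\tfrac{|\alpha_j|^2}{2}h_j\,\omega_\Sigma,
\]
as desired. The argument is independent of the chosen trivialization so the local identity extends globally.

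The only real obstacle is sign bookkeeping: the minus sign in the paper's unusual normalization $[e_j,f_j]=-h_j$, together with the sign from $\phi_j\wedge\bar\phi_j = -i|\phi_j|^2\omega_\Sigma$, must combine with the $\ell_j$ from $\rho(e_j)=\ell_j f_j$ to produce the overall $+i\ell_j h_j$ in the answer. Once the conventions are pinned down (including the verification that $\ell(-\bar\alpha_j)=\ell(\bar\alpha_j)$ which is where \eqref{eq:ell2} enters), everything else is routine.
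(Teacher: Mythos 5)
Your proof is correct and follows essentially the same route as the paper: trivialize locally, use the character action of $T^\C$ on $\fg^{\bar\alpha_j}$, compute $[e_j,\rho e_j]=-\ell_j h_j$, convert $\phi_j\wedge\bar\phi_j$ to $-i\|\phi_j\|^2\omega_\Sigma$, and normalize via $\mu(e_j,e_j)=2/|\alpha_j|^2$. The only cosmetic difference is that you derive $\<e_j,f_j\>=-2/|\alpha_j|^2$ from the identity $[e_\alpha,e_{-\alpha}]=\<e_\alpha,e_{-\alpha}\>h_\alpha$ rather than the paper's direct Killing-form manipulation, which is an equivalent calculation.
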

\begin{proof}
First we note that 
\[
\Ad u\cdot\varphi_j = e^{\alpha_j(\log u)}\varphi_j. 
\]
It is also clear that $[\varphi_j\wedge \rho\varphi_k]=0$ for $k\neq j$. Now let $\tau_j$ be a $\mu$-unitary local section
of $Q^\C(\fg^{\bar\alpha_j})$ and write $\varphi_j  = A_j\tau_j dz$ locally, where $A_j$ is a locally smooth function. 
Then
\[
[\varphi_j\wedge\rho\varphi_j] = [\tau_j,\rho\tau_j]|A_j|^2 dz\wedge d\bar z.
\]
Now $\|\varphi_j\|^2 = |A_j|^2\|dz\|^2$ and $dz\wedge d\bar z = -i\|dz\|^2\omega_\Sigma$ so that
\[
|A_j|^2dz\wedge d\bar z = -i\|\varphi_j\|^2\omega_\Sigma.
\]
Finally, $[\tau_j,\rho\tau_j]$ is a local section of the adjoint bundle $Q^\C(\ft^\C)$ which is trivial since $\ft^\C$ is abelian.
To calculate this we need a unit length vector in each $\fg^{\bar\alpha_j}$. We note that
\[
\mu(e_j,e_j) = -\<e_j,f_j\> = -\tfrac12\<[h_j,e_j],f_j\> =\tfrac12\<h_j,h_j\> = \frac{2}{|\alpha_j|^2}, 
\]
by \eqref{eq:hj}.
Therefore relative to the $\mu$-induced metric on $Q^\C(\fg^\tau_1)\simeq Q\times_T \fg^\tau_1$ we can take $\tau_j$ to be the
equivalence class of 
\[
(q,\frac{|\alpha_j|}{\sqrt{2}}e_j)\in Q^\mu\times \fg^{\bar\alpha_j}.
\]
Hence 
\[
[\tau_j,\rho\tau_j] = \frac{|\alpha_j|^2}{2}[e_j,\rho e_j] = -\ell(\bar\alpha_j)\frac{|\alpha_j|^2}{2}h_j.
\]
\end{proof}
Hence \eqref{eq:F1} holds if and only if
\begin{equation}\label{eq:Todagen}
\Delta_\Sigma\log(u) - \sum_{j=0}^r \ell_j e^{\alpha_j(\log u)}\|\varphi_j\|^2\frac{|\alpha_j|^2}{2}h_j+i*F^\nabla=0.
\end{equation}
Since $u:\Sigma\to \exp(i\ft)$ it has a well-defined logarithm $w=\log(u)$. To make this look a little more familiar we define
functions 
\begin{equation}\label{eq:w_j}
w_j = \alpha_j(\log(u)),
\end{equation} 
and partition the index set $I=\{0,1,\ldots,r\}$ into the union of
\[
I_+ = \{j\in I:\ell_j = 1\},\quad I_-=\{j\in I:\ell_j=-1\}.
\]
Then equation \eqref{eq:Todagen} is equivalent to the system of equations
\begin{equation}\label{eq:Toda1}
\Delta_\Sigma w_j = \sum_{k\in I_+}\|\varphi_k\|^2 \tfrac{|\alpha_k|^2}{2} e^{w_k}\hat C_{jk} - 
\sum_{k\in I_-}\|\varphi_k\|^2 \tfrac{|\alpha_k|^2}{2} e^{w_k}\hat C_{jk} - \alpha_j(i*F^\nabla),\quad j=0,\ldots,r.
\end{equation}
These variables satisfy the relation $\sum_{j=0}^r m_jw_j=0$.
\begin{rem}\label{rem:metric}
Note that after the gauge transformation $s$ the new metric $\|\cdot\|_s^2$ on $Q^\C(\fg^{\bar\alpha_j})$ is given by
$\|\cdot\|_s^2 = e^{w_j}\|\cdot\|^2$.
\end{rem}

We can simplify the term $\alpha_j(*F^\nabla)$ by choosing the initial connection as
follows. First we observe that, since we have chosen $G$ to have trivial centre, the basis $\epsilon_1,\ldots,\epsilon_r\in
i\ft$ dual to the root basis $\alpha_1,\ldots,\alpha_r$ for $\Hom(i\ft,\R)$ generates the kernel $\Gamma$ of the exponential map
\[
\be=\exp(2\pi i \cdot):\ft^\C\to T^\C,
\]
(see, e.g., \cite[Ch VIII]{Ser}). Therefore the root basis freely generates the weight lattice
\[
\hat\Gamma = \{\lambda\in(\ft^\C)^*:\lambda(\Gamma)\subset \Z\},
\]
and this is isomorphic to the character group of $T^\C$ by
\[
\hat\Gamma\to\Hom(T^\C,\Ct); \quad \lambda\mapsto \hat\lambda= \be\circ\lambda\circ\be^{-1}.
\]
Therefore we use the root basis to identify $T^\C$ with $(\Ct)^r$. Thus our holomorphic $T^\C$-bundle $Q^\C$ can be canonically
identified with a product of holomorphic $\Ct$-bundles
\[
Q^\C\simeq Q_1\times_\Sigma\ldots\times_\Sigma Q_r;\quad Q_j = Q^\C\times_{\hat\alpha_j} \Ct.
\]
Now let $Q_j(\C)$ denote the line bundle associated to the $\Ct$-bundle $Q_j$. Then 
\[
Q^\C(\fg_1) = \oplus_{j=0}^r Q_j(\C).
\] 
It follows that
\[
Q_0(\C) \simeq \bigotimes_{j=1}^r Q_j(\C)^{-m_j}.
\]
Set $d_j=\deg(Q_j)$, so $d_0=-\sum_{j=1}^r d_jm_j$. Then in $H^2(\Sigma,\Z)$
\[
[\frac{i}{2\pi}\alpha_j(F^{\nabla})] = [\frac{d_j}{2\pi}\omega_\Sigma], \quad j=0,\ldots,r.
\]
Since we are working with line bundles we may choose the initial metric so that $i\alpha_j(F^{\nabla}) =d_j\omega_\Sigma$, 
and therefore
\[
\alpha_j(i*F^\nabla) =  d_j,\ 0\leq j\leq r.
\]
With this choice of initial metric we finally arrive at the system of equations
\begin{equation}\label{eq:geomToda1}
\Delta_\Sigma w_j = \sum_{k\in I_+}\|\varphi_k\|^2\tfrac{|\alpha_k|^2}{2} e^{w_k}\hat C_{jk} - 
\sum_{k\in I_-}\|\varphi_k\|^2\tfrac{|\alpha_k|^2}{2} e^{w_k}\hat C_{jk} - d_j, \quad j\in I,
\end{equation}
for functions $w_j:\Sigma\to \R$ with $\sum_{j=0}^rm_jw_j=0$. These are the \emph{geometric Toda equations} 
of Definition \ref{defn:geomToda}. 
\begin{rem}\label{rem:constantgauge}
We choose not to absorb the multipliers $|\alpha_k|^2/2$ into either $e^{w_k}$, by a
change of variable, or $\|\varphi_k\|^2$, so as to retain the clear link between the initial data and the equations. 
For simply-laced diagrams (those with no multiple edges) $|\alpha_k|^2/2=1$ for all $k$. 
Note also that the curvature only specifies the initial metric $Q\subset Q^\C$ up to constant scaling on each $Q_j(\C)$,
$j=1,\ldots,r$, but any two such 
metrics are equivalent under the action of the constant gauge transformations $g\in T^\C$, $\varphi\mapsto\Ad g\cdot\varphi$.
With the initial metric fixed, the constant gauge transformations
act on solutions by constant translation, $w\mapsto w+\log(g\kappa(g)^{-1})$.
\end{rem}
As one knows from the earlier studies of Toda systems there is a significant qualitative change in solutions when 
one or more $\varphi_k$ is identically zero.  
\begin{defn}
We will call the system \eqref{eq:geomToda} \emph{cyclic} whenever $\varphi_k$ is not identically zero for all
$k=0,\ldots,r$, and refer to $(Q^\C,\varphi)$ as a cyclic Toda pair. Otherwise we will say it is \emph{non-cyclic}. 
We will say it is \emph{simple non-affine} when $\varphi_k=0$ only for $k=0$.
\end{defn}
An obvious necessary condition for the existence of a cyclic pair is that for $k=0,\ldots,r$ each line bundle $Q_k(\C)\otimes K_\Sigma$ must
admit a non-trivial globally holomorphic section $\varphi_k$ and must therefore have non-negative degree. This immediately leads to 
restrictions on the degrees $d_k$. 
\begin{lem}\label{lem:dineq}
A necessary condition for a $T^\C$-bundle $Q^\C$ to admit a cyclic Toda pair is that the vector of degrees $(d_1,\ldots,d_r)\in\Z^r$
lies in the closed convex polytope given by
\begin{equation}\label{eq:d_k}
2-2g\leq d_k,\quad \sum_{k=1}^r m_kd_k\leq 2g-2,\quad k=1,\ldots,r.
\end{equation}
\end{lem}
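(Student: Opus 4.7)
The plan is to deduce the inequalities directly from the basic fact that a holomorphic line bundle over a compact Riemann surface which admits a non-trivial global holomorphic section must have non-negative degree. The cyclic condition forces each $\varphi_k$ to be a non-trivial element of $H^0(Q_k(\C)\otimes K_\Sigma)$ for $k=0,1,\ldots,r$, so each line bundle $Q_k(\C)\otimes K_\Sigma$ must satisfy
\[
\deg(Q_k(\C)\otimes K_\Sigma)= d_k+(2g-2)\geq 0.
\]

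First I would treat the indices $k=1,\ldots,r$ directly: the inequality above rearranges to $d_k\geq 2-2g$, yielding the first family of constraints in \eqref{eq:d_k}. Next I would handle the case $k=0$ separately, using the isomorphism $Q_0(\C)\simeq \bigotimes_{j=1}^rQ_j(\C)^{-m_j}$ established earlier in the section. Taking degrees gives $d_0=-\sum_{j=1}^rm_jd_j$, and substituting into $d_0+(2g-2)\geq 0$ yields the second constraint $\sum_{k=1}^rm_kd_k\leq 2g-2$.

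Finally I would remark that the polytope defined by these $r+1$ linear inequalities is automatically closed and convex as the intersection of finitely many closed half-spaces in $\R^r$, and that combining the lower bounds $d_k\geq 2-2g$ with the single upper bound $\sum m_kd_k\leq 2g-2$ shows it is indeed bounded (each coordinate is sandwiched between $2-2g$ and $(2g-2+\sum_{j\neq k}m_j(2g-2))/m_k$). There is no substantive obstacle here: the entire content is the one-line observation about degrees of effective divisors, together with the relation $m_0d_0+\sum_{j=1}^rm_jd_j=0$ that follows from $\otimes_{k\in I}Q_k(\C)^{m_k}\simeq \uC$ (which uses $m_0=1$).
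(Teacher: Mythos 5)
Your argument is correct and is essentially the paper's own: the paper justifies the lemma in the sentence immediately preceding it, by observing that each $Q_k(\C)\otimes K_\Sigma$ must have non-negative degree for $\varphi_k$ to be non-trivial, and the $k=0$ case combined with $d_0=-\sum_{j=1}^r m_jd_j$ gives the second inequality. Your additional remarks on closedness, convexity and boundedness of the polytope are accurate but not needed beyond what the paper states.
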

\begin{cor}
For $g=0$ there are no cyclic pairs. For $g=1$ there is, up to gauge equivalence, a real one-parameter family of cyclic pairs.
\end{cor}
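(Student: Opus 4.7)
The plan is to apply Lemma \ref{lem:dineq} and analyze how the constraint polytope \eqref{eq:d_k} degenerates in low genus, then parameterize the residual gauge orbits in the $g=1$ case.

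For $g=0$ the defining inequalities become $d_k\ge 2$ (for $k=1,\ldots,r$) together with $\sum_{k=1}^rm_kd_k\le -2$. Since each $m_k\ge 1$ and each $d_k\ge 2$, the left-hand side of the second inequality is at least $2\sum_{k=1}^r m_k>0$, which contradicts the required upper bound. Hence the polytope is empty and no cyclic Toda pair exists.

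For $g=1$ both bounds collapse to zero, so $d_k\ge 0$ and $\sum_{k=1}^r m_kd_k\le 0$ force $d_k=0$ for every $k=1,\ldots,r$, and hence $d_0=-\sum_{k\ge 1}m_kd_k=0$ as well. Because $K_\Sigma\simeq\uC$ in genus one, the demand that each $\varphi_k\in H^0(Q_k(\C)\otimes K_\Sigma)=H^0(Q_k(\C))$ be nonvanishing forces the degree-zero line bundle $Q_k(\C)$ to be holomorphically trivial, so $Q^\C$ is the trivial $T^\C$-bundle and a cyclic Toda pair reduces to the data $(\varphi_0,\ldots,\varphi_r)\in(\C^\times)^{r+1}$.

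To finish I would quotient by the residual holomorphic automorphism group of $Q^\C$. On a compact Riemann surface every holomorphic map $\Sigma\to T^\C$ is constant, so this group is just $T^\C$, acting by $\varphi_k\mapsto\hat\alpha_k(g)\varphi_k$. Using the identification $T^\C\cong(\C^\times)^r$ via $g\mapsto(\hat\alpha_1(g),\ldots,\hat\alpha_r(g))$ from the discussion preceding Definition \ref{defn:geomToda}, one uses a unique $g\in T^\C$ to set $\varphi_k=1$ for $k=1,\ldots,r$; then $\varphi_0$ is rescaled to the single invariant $c=\prod_{k=0}^r\varphi_k^{m_k}\in\C^\times$, with $T^\C$-invariance of this product being ensured by the relation $\sum_{k=0}^rm_k\alpha_k=0$ of Theorem \ref{thm:affine}(1), equivalently $\hat\alpha_0=\prod_{k=1}^r\hat\alpha_k^{-m_k}$. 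The main delicate step is checking that this parameter $c$ exhausts the moduli and that the stabilizer of a generic $\varphi$ is trivial, so that the quotient is a genuine one-parameter family as claimed.
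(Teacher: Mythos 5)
Your treatment of $g=0$ and the reduction for $g=1$ are exactly the paper's argument: the inequalities of Lemma \ref{lem:dineq} are incompatible for $g=0$, and for $g=1$ they force $d_k=0$, $Q_k(\C)\simeq\uC$ and each $\varphi_k$ constant, so a cyclic pair is just a cyclic element of $\fg_1^\tau$ taken modulo the constant gauge group $T^\C$. Up to that point the proposal is correct and coincides with the paper's proof.

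The gap is in the final count. Your normalisation is computed correctly: prescribing $\hat\alpha_k(g)=\varphi_k^{-1}$ for $k=1,\ldots,r$ determines $g$ uniquely, the stabiliser of a cyclic element is trivial, and the complete invariant of the $\Ad_{T^\C}$-orbit is $c=\prod_{k=0}^r\varphi_k^{m_k}\in\Ct$. But $\Ct$ is a \emph{complex} one-parameter, i.e.\ real two-parameter, family, whereas the statement asserts a \emph{real} one-parameter family, and you never bridge that discrepancy --- ``a genuine one-parameter family'' is left ambiguous at exactly the point where precision is needed. (The paper's own proof normalises each orbit to $c\sum_{k=0}^r e_k$ with $c\in\R\setminus\{0\}$; by your own computation such a representative exists only when $c^M=\prod_k\varphi_k^{m_k}$, $M=\sum_k m_k$, admits a real root, i.e.\ only for orbits whose invariant is real, so the reduction from $\Ct$ to a real parameter is not achieved by holomorphic gauge alone.) To reach a real one-parameter family you must add the observation that the phase of $c$ is immaterial to the object being classified: the geometric Toda equations \eqref{eq:geomToda} depend on $\varphi$ only through the norms $\|\varphi_k\|^2$, and the vector $(\|\varphi_0\|,\ldots,\|\varphi_r\|)\in(\R^+)^{r+1}$, taken modulo the induced action of the constant gauge transformations (which scales the norms by $|\hat\alpha_k(g)|$ and acts on solutions by the translation of Remark \ref{rem:constantgauge}), is classified by the single positive real number $\prod_k\|\varphi_k\|^{m_k}$. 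Without some identification of this kind your argument establishes a real two-parameter family of gauge classes, not the statement as given.
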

\begin{proof}
The first statement is immediate from \eqref{eq:d_k}. For $g=1$ we are forced to have $d_k=0$ and $Q_k(\C)\simeq \uC$ for all $k$, 
and thus each $\varphi_k$ is constant, so that we may consider $\varphi\in \fg_1^\tau$. Now it is easy to see that each $\Ad_{T^\C}$-orbit
of a cyclic element of $\fg_1^\tau$ contains exactly one element of the form $c\sum_{k=0}^r e_k$ for some
$c\in\R\setminus\{0\}$.
\end{proof}
%\begin{rem}\label{rem:g=1}
%When $g=1$ we may choose $\|\varphi_k\|^2=2/|\alpha_k|^2$ 
%to obtain the system
%\begin{equation}\label{eq:g=1}
%\Delta_\Sigma w_j = \sum_{k\in I_+} e^{w_k}\hat C_{jk} - 
%\sum_{k\in I_-}e^{w_k}\hat C_{jk}, \quad j\in I.
%\end{equation}
%This is the simplest form of the cyclic Toda equations for a noncompact simple Lie group.
%\end{rem}
Now let us consider the non-cyclic cases. 
\begin{prop}\label{prop:non-cyclic}
Suppose $(Q^\C,\varphi)$ is a non-cyclic Toda pair with $\varphi_k=0$ for $k\not\in J$ where $J\subset I$ is proper and non-empty.
Then \eqref{eq:geomToda} reduces to a union of simple non-affine Toda systems for $\{w_j:j\in J\}$ while for $l\not\in J$, $w_l$ and
$d_l$ are determined by a linear combination of $\{w_j:j\in J\}$ and $\{d_j:j\in J\}$ respectively.  
\end{prop}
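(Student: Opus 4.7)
My plan is to exploit the fact that, once $\varphi_k \equiv 0$ for all $k \notin J$, every right-hand side in \eqref{eq:geomToda}---whether the equation is indexed by $j \in J$ or by $l \notin J$---becomes a linear combination of the same exponential terms $\|\varphi_k\|^2 \tfrac{|\alpha_k|^2}{2} e^{w_k}$ with $k \in J$, weighted by the columns $\hat C_{\cdot\,k}$ of the affine Cartan matrix. Thus the equations for $l \notin J$ differ from those for $j \in J$ only in how they combine these fixed columns. The task then becomes to read off both the decoupling of the $J$-subsystem and the linear dependence of $(w_l,d_l)$ on $(w_j,d_j)_{j\in J}$ from the linear algebra of $\hat C|_{J\times J}$ together with the trace constraint $\sum_{j=0}^r m_j w_j = 0$.

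The key step I would carry out first is to establish that any proper principal submatrix $\hat C|_{J \times J}$, for $\emptyset \neq J \subsetneq I$, is invertible. This is the linear-algebraic shadow of the classical fact that deleting any nonempty set of vertices from an affine Dynkin diagram leaves a (possibly disconnected) finite-type Dynkin diagram: the symmetrized affine Cartan matrix is positive semidefinite with one-dimensional kernel spanned by $(m_0,\ldots,m_r)$, and strict positivity of all $m_j$ prevents this kernel from meeting any coordinate hyperplane, so every proper principal submatrix is positive definite. Given this, for each $l \notin J$ I would solve
\[
\hat C_{lk} \;=\; \sum_{j\in J} a^{(l)}_j\, \hat C_{jk}, \qquad k \in J,
\]
uniquely for the scalars $a^{(l)}_j$. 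Substituting into the $w_l$-equation and subtracting the corresponding combination of the $w_j$-equations with $j \in J$ yields
\[
\Delta_\Sigma\Bigl(w_l - \sum_{j\in J} a^{(l)}_j w_j\Bigr) \;=\; \sum_{j\in J} a^{(l)}_j\, d_j - d_l,
\]
and this constant right-hand side must vanish after integration over the compact surface $\Sigma$. That immediately gives the linear relation $d_l = \sum_{j\in J} a^{(l)}_j d_j$ among the degrees; the left-hand side then exhibits $w_l - \sum_{j\in J} a^{(l)}_j w_j$ as harmonic on $\Sigma$ and hence constant, and I would pin down this constant using $\sum_{j=0}^r m_j w_j = 0$.

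To finish, I must show that the $J$-subsystem genuinely decomposes as a union of simple non-affine systems. Since the subdiagram of $\Gamma^{(n)}$ supported on $J$ is finite-type by the preceding remark, it is a disjoint union of connected Dynkin diagrams $\Gamma_1,\ldots,\Gamma_p$ of simple Lie algebras, and $\hat C_{jk} = 0$ whenever $j,k$ lie in different components; consequently the $J$-equations uncouple into $p$ independent Toda systems, one per component, each with all associated $\varphi_k$ nonzero, hence each a simple non-affine system in the sense of Definition \ref{defn:geomToda}. The hard part, such as it is, is really only the invertibility of $\hat C|_{J\times J}$; once that is in hand the rest is routine manipulation, with the only minor nuisance being a compatibility check showing that the additive constants appearing when one expresses $w_l$ via $\{w_j\}_{j\in J}$ are indeed fixed by the trace condition.
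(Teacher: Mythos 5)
Your proposal follows essentially the same route as the paper: rewrite every equation as a linear combination of the exponential terms indexed by $J$, invert the principal submatrix $\hat C|_{J\times J}$ (the paper simply cites invertibility of the finite-type Cartan matrix of the subdiagram, where you supply the positive-definiteness argument via the kernel $(m_0,\ldots,m_r)$ of the symmetrized affine matrix), and integrate over the compact surface to extract the relation determining $d_l$; your coefficients $a^{(l)}_j$ are exactly the paper's $\sum_{k\in J}\hat C_{lk}A_{kj}$. The one small discrepancy is at the very end: the trace relation $\sum_{j}m_jw_j=0$ yields only the single condition $\sum_{l\notin J}m_lc_l=0$ on the harmonic (hence constant) differences $c_l=w_l-\sum_{j\in J}a^{(l)}_jw_j$ (the $w_j$-terms cancel identically because $\sum_{l\notin J}m_la^{(l)}_j=-m_j$), so it cannot fix each $c_l$ individually when $|I\setminus J|>1$; the paper instead absorbs these constants by a residual constant gauge transformation in $T^\C$ leaving $\varphi$ invariant.
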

\begin{proof}
Set $f_k = \ell(\bar\alpha_k)\|\varphi_k\|^2\tfrac{|\alpha_k|^2}{2} e^{w_k}$. Since $f_k=0$ for $k\not\in J$ we can write \eqref{eq:geomToda} simply as
\begin{equation}\label{eq:gT}
\Delta_\Sigma w_l+d_l = \sum_{k\in J} \hat C_{lk}f_k, \quad l\in I.
\end{equation}
Let $B$ be the submatrix of $\hat C$ obtained by removing the rows and columns not indexed by $J$. This is the (possibly
decomposable) Cartan matrix for the subdiagram of the affine Dynkin diagram for $\hat C$ obtained by removing the vertices
labelled by $I\setminus J$. Therefore 
\[
\Delta_\Sigma w_j+d_j = \sum_{k\in J} B_{jk}f_k,\quad j\in J.
\]
This is a union of simple non-affine Toda systems. Since every Cartan matrix is invertible, the matrix $B$ has inverse $A$
so that
\[
\sum_{j\in J} A_{kj}(\Delta_\Sigma w_j+d_j) = f_k,\quad k\in J.
\]
Substituting this into \eqref{eq:gT} for $l\not\in J$ leads to 
\[
\Delta_\Sigma (w_l - \sum_{j,k\in J} \hat C_{lk} A_{kj} w_j) = \sum_{j,k\in J} \hat C_{lk}A_{kj} d_j -d_l.
\]
Since $\Sigma$ is compact the integral of the left hand side is zero and therefore the right hand side is zero, which
determines each $d_l$. Then, after possibly a constant gauge transformation which leaves $\varphi$ invariant, 
\[
w_l = \sum_{j,k\in J} \hat C_{lk} A_{kj} w_j.
\]
\end{proof}

\subsection{Non-cyclic Toda pairs which are not simple non-affine.}\label{sec:non-affine}
Here is the geometric interpretation of non-cyclic Toda pairs which are not simple non-affine. On the Dynkin diagram
$\Gamma^{(n)}$ label the vertices by $s_j=1$ if $j\in I\setminus J$ (i.e., $\varphi_j= 0$) and $s_j=0$ otherwise.
Using the method of Kac \cite{Kac}
this provides an automorphism $\gamma\in\Aut(\fg^\C)$ of type $(s_0,\ldots,s_r;n)$
which commutes with $\sigma$, $\kappa$ (and hence $\rho$) and $\tau$. 
It has order $o(\gamma)=\sum_{j=0}^rnm_js_j$. Inspection shows that 
$o(\gamma)=1$ if and only if $n=1$ and the Toda pair is equivalent to a simple non-affine Toda pair by a symmetry of the extended
Dynkin diagram $\Gamma^{(1)}$. Otherwise the fixed point subalgebra $\fa=\fg_0^\gamma$ is 
a reductive proper subalgebra of $\fg^\C$ which we can write as a Lie algebra direct sum
\[
\fa = \fa_1\oplus\ldots\oplus\fa_k\oplus\fz.
\]
Here each $\fa_j$ is simple and is determined by a connected subdiagram of the nodes labelled by $J$. The ideal
$\fz$ is the centre and it is contained in $\ft^\C$. Let $A=G^\gamma < G$ have Lie algebra $\fa\cap\fg$, and
similarly $A_l,Z<G$ have Lie algebras $\fa_l\cap\fg$ and $\fz\cap \fg$ respectively. Then
\[
A\simeq A_1\times \ldots\times A_k \times Z.
\]
Moreover, $T< A$ and we can write $T\simeq T_1\times \ldots\times T_k\times Z$ where $T_l<A_l$ is the $\tau$-fixed torus in $A_l$. 
The factor $A_l$ will be noncompact provided $\ell(\bar\alpha_j)=-1$ for some node on the connected subdiagram
corresponding to $\fa_l$.
Clearly, for each $A_l$ we have a simple non-affine Toda system and each determines a primitive harmonic map into
$A_j/T_j$ (using the primitive distribution for non-affine roots). Now
\[
A/T\simeq (A_1/T_1)\times\ldots\times (A_k/T_k),
\]
since $T/Z\simeq T_1\times\ldots\times T_k$. Therefore the product of maps into $A_l/T_l$ corresponds to the map obtained
from the original non-cyclic Toda pair and factors through the (totally geodesic) embedding $A/T\subset G/T$. 

\subsection{Inner versus outer Coxeter automorphisms.}\label{sec:innerouter}
Most of the literature considers only the Toda equations for an extended Dynkin diagram (i.e., the inner Coxeter
automorphism). It was observed by Olive \&
Turok \cite{OliT} that when the Dynkin diagram admits a non-trivial symmetry this can
be imposed upon those Toda equations to obtain a reduced system.
The aim here is to explain how to interpret the geometry of this when $\nu$ is an involution, which is the
only case relevant to a discussion of real forms. 

Suppose the Dynkin diagram of $\fg^\C$ admits a non-trivial involution.
As before let $\nu$ denote both the symmetry on the nodes
of the diagram and the corresponding outer automorphism of $\fg^\C$. Let $\tau'=\nu\tau_\inn$ and recall from Remark
\ref{rem:innerouter} that this is not a Coxeter automorphism unless $\fg^\C\simeq \fa_{2k}$. When $\tau_\inn$ has order
$m$ the order of $\tau'$ is $m$ if $m$ is even and $2m$ otherwise. If $\tau_\inn(\xi)=\lambda\xi$ and $\nu(\xi)=\xi$ then
$\tau'(\xi)=\lambda\xi$ and therefore
\[
\fg_1^{\tau_\inn}\cap \fg_0^\nu \supset \fg^{\tau'}_j,\quad j= 
\begin{cases} 1,\ o(\tau')=m,\\ 2,\ o(\tau')=2m.\end{cases}
\]
For the Toda equations \eqref{eq:geomToda} to admit the symmetry of $\nu$ we require $\ell(\alpha_j) =
\ell(\alpha_{\nu(j)})$ and the Toda pair $(Q^\C,\varphi)$ to admit this symmetry, i.e., there is an isomorphism
from $Q_j(\C)$ to $Q_{\nu(j)}$ which identifies $\varphi_j$ with $\varphi_{\nu(j)}$.
In particular the involution $\sigma$
corresponding to $\ell$ commutes with $\nu$ and $\sigma'=\nu\sigma$ is an outer involution. Let $G<G^\C$ be the
real form determined by $\rho=\sigma\kappa$ and $G'$ the real form determined by $\rho'=\sigma'\kappa$, 
with maximal compact subgroups $H,H'$ and
in each of these maximal toral subgroups $T,T'$, where $T'<T$ is the $\nu$-fixed subgroup. Using $\fg^{\tau'}_j$ above we
can equip $G'/T'$ with a horizontal $F$-structure.

Now set $A=G\cap G'$ with Lie
algebra $\fa=\fg\cap\fg'\subset \fg_0^\nu$ and notice $T'<A$. Choose a $(T')^\C$-bundle $Q^\C$ 
and $\varphi\in H^0(Q^\C(\fg_j^{\tau'})\otimes K)$, then $\nu\varphi=\varphi$.
The Toda equations for this
Toda pair $(Q^\C,\varphi)$ are the equations which govern the existence of a $T'$-subbundle $Q\subset
Q^\C$ so that the connection 
\[
\nabla+\varphi+\rho\varphi =\nabla+\varphi+\rho'\varphi,
\]
is flat. Because of the symmetry this is an $A$-connection and provides an equivariant map into $A/T'$ with holonomy
representation $\chi:\pi_1\Sigma\to A$. When we post-compose with $A/T'\to G/T$ we obtain a primitive harmonic map which
projects onto a harmonic map $f_1:\tilde\Sigma\to G/H$ into the inner symmetric space. On the other hand, post-composition with $A/T'\to
G'/T'$ gives an $F$-holomorphic map which projects down to a harmonic map $f_2:\tilde\Sigma\to G'/H'$ into
the outer symmetric space. Note that these maps $f_1,f_2$ are equivariant with respect to \emph{the same} holonomy
representation $\chi$.  When $\varphi$ is cyclic the map $f_2$ does not agree with any map obtained using
a cyclic Toda pair for an outer Coxeter automorphism except when $\fg^\C\simeq \fa_{2k}$ since the two primitive distributions are inequivalent.

\section{Existence of solutions via principal pairs.}\label{sec:pairs}

To apply the theory of principal pairs \cite{BraGM,GarGM} (cf.\ \cite{Ban})
it is necessary to have $\fg^\tau_1\subset\fm^\C$. This is because the principal pair equations for
this situation are 
\begin{eqnarray*}
 F^\nabla -[\varphi\wedge \kappa\varphi]&=&0,\\
\nabla''\varphi&=&0,
\end{eqnarray*}
and therefore we require $\rho\varphi=-\kappa\varphi$ for the first equation to agree with \eqref{eq:F1}.
Thus we need $\sigma=-1$ on $\fg_1^\tau$ and therefore the pair $(\tau,\sigma)$ is totally noncompact.
Given this, a pair $(Q^\C,\varphi)$
consisting of a holomorphic $T^\C$-bundle $Q^\C$ and a holomorphic section $\varphi$  of $Q^\C(\fg^\tau_1)\otimes K_\Sigma$, is
an example of a $K_\Sigma$-twisted principal pair \cite{GarGM}. We will refer to $(Q^\C,\varphi)$
as a \textit{totally noncompact Toda pair}.

Because the Toda equations \eqref{eq:geomToda} are equivalent to the system \eqref{eq:F1} and \eqref{eq:F2} they possess a solution
when $(Q^\C,\varphi)$ is a $0$-polystable principal pair. Since $T^\C$ is abelian the conditions for $0$-stability are relatively easy
to formulate, following \cite{GarGM}. For each $\chi\in\Hom(i\ft,\R)$ let $h_\chi\in i\ft$ be given by $\chi=\<h_\chi,\cdot\>$. Define
\[
B_\chi^-= \{\xi\in\fg^\tau_1: \Ad\exp(th_\chi)\cdot \xi \text{ is bounded as } t\to\infty,\ t\in\R^+\}
\]
and its subset
\[
B_\chi^0= \{\xi\in\fg^\tau_1: \Ad\exp(th_\chi)\cdot \xi =\xi\ \text{ for all } t\in\R\}.
\]
Note that these are both $T^\C$-invariant and therefore the associated bundles 
\[
Q^\C(B_\chi^0)\subset Q^\C(B_\chi^-)\subset Q^\C(\fg_1^\tau)
\]
are well-defined. We also write $\chi = \sum_{j=1}^r x_j\alpha_j$, where $x_j\in\R$, and define
\[
\deg(Q^\C)_\chi = \sum_{j=1}^r x_jd_j.
\]
\begin{defn}[cf. \cite{GarGM}]
The pair $(Q^\C,\varphi)$ is $0$-polystable if for every non-zero $\chi\in\Hom(i\ft,\R)$ for which $\varphi\in H^0(Q^\C(B_\chi^-)\otimes
K_\Sigma)$ we have $\deg(Q^\C)_\chi\geq 0$, with equality only if $\varphi\in H^0(Q^\C(B_\chi^0)\otimes K_\Sigma)$.
When $\deg(Q^\C)_\chi>0$ we say the pair is $0$-stable. 
\end{defn}
Note that in the language of \cite{GarGM} $\chi$ is an antidominant character. The main theorem we wish to apply can be
stated as follows, based on the exposition in \cite{GarGM}.
\begin{thm}
Let $(Q^\C,\varphi)$ be a totally noncompact Toda pair which is $0$-polystable. Then the Toda equations \eqref{eq:geomToda} 
possess a solution, and there
is a corresponding equivariant harmonic map $f:\caD\to G/T$. This solution is uniquely determined when $(Q^\C,\varphi)$ is
$0$-stable.
\end{thm}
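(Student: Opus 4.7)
The plan is to identify the geometric Toda equations \eqref{eq:geomToda} with the Hermite--Einstein (symplectic vortex) equations for the $K_\Sigma$-twisted principal pair $(Q^\C,\varphi)$ and then to invoke the Hitchin--Kobayashi correspondence for principal pairs of García-Prada, Gothen and Mundet i Riera \cite{GarGM}.

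First I would verify that the passage from \eqref{eq:F1}--\eqref{eq:F2} to \eqref{eq:geomToda} carried out in Section 4 is reversible in the totally noncompact case. Since $\ell_j=-1$ for every $j\in I$, we have $\sigma=-1$ on $\fg_1^\tau$, so $\rho=-\kappa$ on $Q^\C(\fg_1^\tau)$, and \eqref{eq:F1} becomes $F^\nabla-[\varphi\wedge\kappa\varphi]=0$, which is precisely the Hermite--Einstein equation for the principal pair set up in \cite{GarGM} (once one accounts for the degree terms $-d_j$ coming from the curvature of the initial metric, as in the derivation leading to \eqref{eq:geomToda1}). Equation \eqref{eq:F2} is automatic since $\varphi$ is holomorphic and $\nabla''$ is the Dolbeault operator of any reduction $Q\subset Q^\C$. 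Consequently a metric reduction solves the Hermite--Einstein equation if and only if the associated functions $w_j$ of \eqref{eq:w_j} solve \eqref{eq:geomToda}.

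Next I would apply the main theorem of \cite{GarGM}: a $K_\Sigma$-twisted principal $T^\C$-pair admits a Hermite--Einstein reduction if and only if it is $0$-polystable, and the reduction is unique up to the automorphism group of the pair when $(Q^\C,\varphi)$ is $0$-stable. Because $T^\C$ is abelian and reductive, antidominant characters reduce to $\chi\in\Hom(i\ft,\R)$, and the Hilbert--Mumford flags degenerate to the $T^\C$-invariant subspaces $B_\chi^-\supset B_\chi^0\subset \fg_1^\tau$ defined in the excerpt, so the formulation of $0$-(poly)stability given above is the specialisation of the general one. Given the resulting solution of \eqref{eq:geomToda}, Corollary \ref{cor:Q} produces the $\pi_1\Sigma$-equivariant $\tau$-primitive harmonic map $\psi:\tilde\Sigma\to G/T$, which is the claimed $f$. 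Uniqueness up to isometry in the stable case follows by combining the uniqueness clauses of \cite{GarGM} and Theorem \ref{thm:struct}.

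The main obstacle is the careful matching of conventions. One must verify that the moment map used in \cite{GarGM} for the linear $T^\C$-action on $\fg_1^\tau$ equipped with the Hermitian form $\mu$, when written out in a $T$-gauge, reproduces the exact coefficients $\tfrac{|\alpha_k|^2}{2}\|\varphi_k\|^2 e^{w_k}\hat C_{jk}$ and the constants $-d_j$ in \eqref{eq:geomToda}; this essentially repeats the root-space computation of Section 4 inside the moment-map formalism. A secondary subtlety is that $\fg_1^\tau$ is not an irreducible $T^\C$-representation when $\nu\neq 1$, but since $T^\C$ is abelian it splits diagonally as $\bigoplus_{j\in I}\fg^{\bar\alpha_j}$ with each root line weighted by a distinct character, a setting in which the stability criterion of \cite{GarGM} is known to be sharp. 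Finally, the constant $T^\C$-gauge freedom noted in Remark \ref{rem:constantgauge} must be identified with the automorphism group of $(Q^\C,\varphi)$ referred to in the uniqueness clause of \cite{GarGM}, so that ``uniquely determined'' in the stable case is interpreted modulo this finite-dimensional action, matching the isometry ambiguity in Theorem \ref{thm:struct}.
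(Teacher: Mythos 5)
Your proposal is correct and follows essentially the same route as the paper: the paper states this theorem as the specialisation to abelian $T^\C$ of the Hitchin--Kobayashi correspondence for $K_\Sigma$-twisted principal pairs in \cite{GarGM}, relying on the identification of \eqref{eq:F1}--\eqref{eq:F2} with the principal pair equations (valid exactly in the totally noncompact case, since $\rho=-\kappa$ on $\fg_1^\tau$) and on the Section 4 computation translating these into \eqref{eq:geomToda}. Your additional remarks on matching the moment-map coefficients, the splitting of $\fg_1^\tau$ into root lines, and the residual constant gauge freedom are exactly the bookkeeping the paper leaves implicit.
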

It is easy to check that cyclic Toda pairs are always $0$-stable.
\begin{prop}\label{prop:cyclic}
Every totally noncompact cyclic Toda pair $(Q^\C,\varphi)$ is $0$-stable. 
\end{prop}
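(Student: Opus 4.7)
The plan is to show that, for a totally noncompact cyclic Toda pair, there is in fact no nonzero antidominant character $\chi$ satisfying the hypothesis $\varphi\in H^0(Q^\C(B_\chi^-)\otimes K_\Sigma)$, so the stability condition is vacuously satisfied (with strict inequality).

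First, I would unpack $B_\chi^-$ explicitly. The adjoint action of $T^\C$ preserves each root space $\fg^{\bar\alpha_k}$ and acts on it by a character, so for $\xi=\sum_{k=0}^r c_k e_k\in\fg_1^\tau$ we have
\[
\Ad\exp(th_\chi)\cdot\xi = \sum_{k=0}^r e^{t\alpha_k(h_\chi)}c_k e_k.
\]
Boundedness as $t\to +\infty$ is therefore equivalent to $\alpha_k(h_\chi)\leq 0$ for every $k$ with $c_k\neq 0$, which gives
\[
B_\chi^- = \bigoplus_{k:\,\alpha_k(h_\chi)\leq 0}\fg^{\bar\alpha_k}.
\]
Correspondingly $Q^\C(B_\chi^-)$ is the direct sum of those line subbundles $Q_k(\C)$ for which $\alpha_k(h_\chi)\leq 0$.

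Second, I would use cyclicity. The holomorphic section $\varphi$ decomposes as $\varphi=\sum_k\varphi_k$ with $\varphi_k\in H^0(Q_k(\C)\otimes K_\Sigma)$, and cyclicity means $\varphi_k\not\equiv 0$ for every $k\in I$. Therefore $\varphi\in H^0(Q^\C(B_\chi^-)\otimes K_\Sigma)$ forces $\alpha_k(h_\chi)\leq 0$ for \emph{every} $k=0,1,\ldots,r$.

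The main (and only) step is then the affine relation. Because $\sum_{k=0}^r m_k\alpha_k=0$ with all $m_k>0$, evaluating at $h_\chi$ gives
\[
0 = \sum_{k=0}^r m_k\alpha_k(h_\chi),
\]
a sum of non-positive terms whose total vanishes. Hence each summand vanishes: $\alpha_k(h_\chi)=0$ for all $k$. Restricting to $k=1,\ldots,r$, the simple roots $\alpha_1,\ldots,\alpha_r$ form a basis of $(\ft^\C)^*$, so $h_\chi=0$, and the Killing-form isomorphism $\Hom(i\ft,\R)\to i\ft$, $\chi\mapsto h_\chi$, gives $\chi=0$, contradicting the assumption that $\chi\neq 0$. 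Consequently no nonzero $\chi$ meets the hypothesis of the stability definition, so $(Q^\C,\varphi)$ is $0$-stable (indeed, $\deg(Q^\C)_\chi>0$ is required only for a set of $\chi$ that turns out to be empty).

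The argument is essentially combinatorial once the setup is in place; there is no real obstacle beyond correctly identifying $B_\chi^-$ as a sum of root spaces and invoking positivity of the affine coefficients $m_k$. The one point worth double-checking in the write-up is the identification of $\alpha_0$: in all cases (whether $n=1$, $2$ or $3$) the first-component identity $\sum_{k=0}^r m_k\alpha_k=0$ holds, so the same relation drives the proof uniformly across inner and outer Coxeter automorphisms.
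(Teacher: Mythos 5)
Your proof is correct and follows essentially the same route as the paper: cyclicity forces $\langle\alpha_k,\chi\rangle\leq 0$ for every $k\in I$, and the affine relation $\sum_k m_k\alpha_k=0$ with $m_k>0$ then forces $\chi=0$, so the stability condition is vacuous. Your write-up is slightly more explicit about identifying $B_\chi^-$ as a sum of root spaces, but the argument is the same.
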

\begin{proof}
We will show that $0$-stability is vacuously satisfied by a cyclic pair. Suppose $\chi\in \Hom(i\ft,\R)$, then 
\begin{equation}\label{eq:Adexp}
\Ad\exp(th_\chi)\cdot\varphi = \sum_{k=0}^r\Ad\exp(th_\chi)\cdot\varphi_k = \sum_{k=0}^r \exp(t\<\alpha_k,\chi\>)\varphi_k.
\end{equation}
This is bounded as $t\to\infty$ if and only if
\[
\<\alpha_k,\chi\>\leq 0\ \text{for all}\ k=0,\ldots,r.
\]
Since $\alpha_0=-\sum_{j=1}^r m_j\alpha_j$ and $m_j>0$ this cannot hold when $\chi\neq 0$.
\end{proof}
By Prop.\ \ref{prop:non-cyclic} the non-cyclic case is entirely determined by simple non-affine cases. To understand the
stability conditions for the non-affine case we need the following ideas. 
Recall that $\epsilon_1,\ldots,\epsilon_r\in i\ft$ is the dual basis, $\alpha_j(\epsilon_k) =\delta_{jk}$. The roots
determine the closure $W$ of the positive Weyl chamber and the interior $\INT(W^o)$ of its polar cone:
\[
W = \{h\in i\ft:\alpha_j(h)\geq 0\},\quad \INT(W^o)=\{t\in i\ft:\<t,h\>< 0\ \forall\ h\in W\}.
\]
\begin{prop}\label{prop:non-affine}
A totally noncompact simple non-affine Toda pair $(Q^\C,\varphi)$ is $0$-polystable if and only if it is $0$-stable, which
occurs if and only if the degrees $d_1,\ldots,d_r$ are such that $2-2g\leq d_j$ and $\sum_{j=1}^r d_j\epsilon_j$ lies in $\INT(W^o)$.
Equally,
\begin{equation}\label{eq:Rd}
2-2g\leq d_j,\quad \sum_{k=1}^rR_{jk}d_k<0,
\end{equation}
where $R_{jk}$ are the entries to the inverse of the matrix whose entries are $\<\alpha_j,\alpha_k\>$.
In particular, this requires $g\geq 2$.
\end{prop}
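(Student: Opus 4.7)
The plan is to parallel the proof of Proposition \ref{prop:cyclic} but now to exploit the fact that $\varphi_0=0$, so that the obstruction to nonzero antidominant characters disappears and stability becomes a genuine condition on the degrees. First I would identify the admissible $\chi$. For $\chi\in\Hom(i\ft,\R)$ the same computation as in \eqref{eq:Adexp} gives $\Ad\exp(th_\chi)\cdot\varphi_k = e^{t\alpha_k(h_\chi)}\varphi_k$. Since $\varphi_k\neq 0$ precisely for $k=1,\ldots,r$, the condition $\varphi\in H^0(Q^\C(B^-_\chi)\otimes K_\Sigma)$ is equivalent to $\alpha_k(h_\chi)\leq 0$ for $k=1,\ldots,r$, i.e.\ $-h_\chi\in W$.

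Next I would show that $0$-polystability collapses to $0$-stability in this setting. The equality case $\varphi\in H^0(Q^\C(B^0_\chi)\otimes K_\Sigma)$ forces $\alpha_k(h_\chi)=0$ for every $k=1,\ldots,r$; since $\alpha_1,\ldots,\alpha_r$ is a basis of $(\ft^\C)^*$ this implies $h_\chi=0$, hence $\chi=0$. Thus only the excluded trivial character attains equality, so polystability is equivalent to the strict inequality $\deg(Q^\C)_\chi>0$ on every nonzero admissible $\chi$.

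The main computation then rewrites the degree pairing in the dual basis. Writing $-h_\chi=\sum_{j=1}^r\lambda_j\epsilon_j$ with $\lambda_j\geq 0$ and not all zero, and relating the coordinates $x_j$ of $\chi=\sum x_j\alpha_j$ to the $\lambda_j$ via the inverse $R=M^{-1}$ of the Gram matrix $M_{jk}=\<\alpha_j,\alpha_k\>$, one obtains
\[
\deg(Q^\C)_\chi \;=\; -\sum_{k=1}^r\lambda_k\Bigl(\sum_{j=1}^r R_{jk}d_j\Bigr).
\]
Strict positivity for every nonzero $(\lambda_j)\in\R_{\geq 0}^r$ is then equivalent to $\sum_j R_{jk}d_j<0$ for each $k=1,\ldots,r$. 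The geometric reformulation $\sum d_j\epsilon_j\in\INT(W^o)$ follows from the elementary identity $\<\epsilon_j,\epsilon_k\>=R_{jk}$ together with the presentation $W=\{\sum\lambda_k\epsilon_k:\lambda_k\geq 0\}$.

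Finally, the bound $d_j\geq 2-2g$ is forced by the nontriviality of $\varphi_j\in H^0(Q_j(\C)\otimes K_\Sigma)$. For the last assertion I would invoke the classical fact that the inverse of any finite-type Cartan matrix has strictly positive entries; since $R$ differs from such an inverse by a positive diagonal rescaling by squared root lengths, every $R_{jk}>0$. Combined with $d_j\geq 2-2g$ and $\sum_j R_{jk}d_j<0$ for all $k$, this forces at least one $d_j$ to be strictly negative, hence $2-2g<0$ and $g\geq 2$. The main obstacle is essentially the linear-algebra dictionary between the antidominant-character formulation and the polar-cone formulation; the $g\geq 2$ consequence then rests on the classical positivity of entries of the inverse Cartan matrix.
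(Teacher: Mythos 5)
Your proof is correct and follows essentially the same route as the paper's: the same identification of the admissible antidominant characters via $-h_\chi\in W$, the same collapse of $0$-polystability to $0$-stability using that $\alpha_1,\ldots,\alpha_r$ is a basis, and the same dual-basis computation identifying $\<\epsilon_j,\epsilon_k\>$ with $R_{jk}$. The only substantive addition is your explicit derivation of the final claim $g\geq 2$ from the positivity of the entries of the inverse Cartan matrix, a point the paper states in the proposition but does not argue in its proof.
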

\begin{proof}
From \eqref{eq:Adexp} we see that $\varphi\in H^0(Q^\C(B_\chi^-)\otimes K_\Sigma)$ precisely when 
\begin{equation}\label{eq:chi}
\<\alpha_k,\chi\>\leq 0\ \text{for all}\ k=1,\ldots,r.
\end{equation}
In particular $\Ad\exp h_\chi\cdot\varphi=\varphi$ implies $\chi= 0$, hence $0$-polystable implies $0$-stable.
Now \eqref{eq:chi} is equivalent to $-h_{\chi}\in W$. If we set $\chi = x_j\alpha_j$ and $d = \sum_{j=1}^rd_j\epsilon_j$ then
we require
\[
0>-\sum_{j=1}^r x_jd_j = -\chi(d) = \<-h_\chi,d\>,\ \forall -h_\chi\in W,
\]
and therefore $d\in \INT(W^o)$. Clearly $h\in W$ if and only if $h=\sum_{k=1}^ra_k\epsilon_k$ with $a_k\geq 0$. Hence the
condition above is equivalent to 
\[
\sum_{j=1}^r d_j\<\epsilon_j,\epsilon_k\> <0,\ \forall\ k=1,\ldots,r.
\]
Now $h_{\alpha_k}=\sum_{k=1}^r\<\alpha_l,\alpha_k\>\epsilon_l$ and therefore
\[
\delta_{kj} = \<h_{\alpha_k},\epsilon_j\> = \sum_{l=1}^r\<\alpha_l,\alpha_k\>\<\epsilon_l,\epsilon_j\>.
\]
Thus we obtain \eqref{eq:Rd}.
\end{proof}
\begin{rem}\label{rem:totally}
It is not necessary for $(\tau,\sigma)$ to be totally noncompact for Prop.\ \ref{prop:non-affine} to hold. Clearly all that is required is that the
roots $\bar\alpha_1,\ldots,\bar\alpha_r$ are noncompact ($\ell(\bar\alpha_0)$ can have either sign). 
\end{rem}

\section{Toda pairs and Higgs bundles.}\label{sec:Higgs}

Whenever the geometric Toda equations \eqref{eq:geomToda} admit solutions there is a corresponding primitive harmonic map
$\psi:\tilde\Sigma\to G/T$ from the universal cover $\tilde\Sigma$ of $\Sigma$ by Theorem \ref{thm:struct}. This is
equivariant with respect to some representation $\chi:\pi_1\Sigma\to G$. By 
Theorem \ref{thm:Black} and Proposition \ref{prop:conformal} after homogeneous projection $\pi:G/T\to G/H$ we obtain an equivariant 
harmonic map $f:\tilde\Sigma\to G/H$. This has a corresponding $G$-Higgs bundle, 
i.e., a holomorphic principal $H^\C$ bundle
$P^\C$ over $\Sigma$ equipped with a holomorphic section $\Phi\in H^0(P^\C(\fm^\C)\otimes K_\Sigma)$. 
For practical purposes we
can take the Higgs bundle to be the pair $(E,\Phi)$ where $E=P^\C(\fm^\C)$.

We can describe fairly easily the relationship between the
Toda pair $(Q^\C,\varphi)$ for $\psi$ and the Higgs bundle $(E,\Phi)$ of $f=\pi\circ\psi$. To achieve this we need to compare the
reductive decompositions of $\fg^\C$ provided by $\tau$ and by $\sigma$. Write
\[
\fg^\C = \ft^\C\oplus\fp^\C = \fh^\C \oplus \fm^\C,
\]
for these. Then $\fp^\C = \fk^\C\oplus\fm^\C$ where $\fk^\C=\fp^\C\cap\fh^\C$. In particular, $\Ad_{T^\C}$ preserves both
$\fk^\C$ and $\fm^\C$ so we have a bundle decomposition $Q^\C(\fp^\C)=Q^\C(\fk^\C)\oplus Q^\C(\fm^\C)$ and the
corresponding decomposition on smooth sections
\begin{eqnarray*}
\Gamma(Q^\C(\fp^\C)\otimes K_\Sigma) & = & \Gamma(Q^\C(\fk^\C)\otimes K_\Sigma) \oplus \Gamma(Q^\C(\fm^\C)\otimes K_\Sigma)\\
\varphi & = & \varphi_\fk + \varphi_\fm.
\end{eqnarray*}
Note that since $\Ad_{T^\C}$ preserves root spaces, for a section $\varphi$ of $Q^\C(\fg_1)\otimes K_\Sigma$
written as $\sum_{j\in I}\varphi_j$ we have
\[
\varphi_\fk = \sum_{j\in I_+}\varphi_j,\quad \varphi_\fm = \sum_{j\in I_-}\varphi_j,
\]
when $\sigma$ corresponds to the partition $I=I_+\cup I_-$. 
\begin{prop}\label{prop:Higgs}
Let $(Q^\C,\varphi)$ be a Toda pair which provides a primitive harmonic map $\psi:\tilde\Sigma\to G/T$. Let $\nabla$ be
the connection and $\rho$ the real involution this induces on $Q^\C(\fm^\C)$ by the reduction of
structure group $Q\subset Q^\C$. Then the Higgs
bundle $(E,\Phi)$ for its projection $f=\pi\circ\psi:\tilde\Sigma\to G/H$ is such that
$E\simeq Q^\C(\fm^\C)$ as a smooth bundle but equipped with the holomorphic structure
$\nabla'' +\rho\varphi_\fk$, and $\Phi = \varphi_\fm$.
\end{prop}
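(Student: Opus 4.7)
The plan is to read off the Higgs bundle data by decomposing the flat $G$-connection $\nabla+\varphi+\rho\varphi$ on $Q(\fg)$ first according to the Cartan decomposition $\fg=\fh\oplus\fm$ and then according to type with respect to the complex structure on $\Sigma$.

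First I would set up the underlying bundles. The reduction $Q\subset Q^\C$ extends via $T\hookrightarrow H$ to a principal $H$-bundle $P=Q\times_T H$, and this is canonically isomorphic to $f^{-1}G$ for $f=\pi\circ\psi$. Its complexification $P^\C$ is the $H^\C$-bundle underlying the $G$-Higgs bundle attached to $f$. Because $H^\C$ acts on $\fm^\C$ by restriction to $T^\C$, the associated bundle $E=P^\C(\fm^\C)$ is canonically smoothly isomorphic to $Q^\C(\fm^\C)$, establishing the identification of smooth bundles in the statement.

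Second, recall the general construction of the Higgs bundle from an equivariant harmonic map $f:\tilde\Sigma\to G/H$: the flat $G$-connection on $Q\times_T G$ decomposes, with respect to the Cartan splitting, as $D+\Psi$, where $D$ is the pull-back of the canonical $H$-connection on $G\to G/H$ (i.e.\ the Chern connection for the reduction $P\subset P^\C$ determined by the metric) and $\Psi\in\caE^1(\Sigma,P(\fm))$ is $df$. The Higgs field is $\Phi=\Psi^{(1,0)}$ and the holomorphic structure on $E$ is $\bar\partial_E=D^{(0,1)}$; the flatness of $\nabla+\varphi+\rho\varphi$, combined with $\nabla''\varphi=0$, then forces $\bar\partial_E\Phi=0$.

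Applying this recipe to our data, write $\varphi=\varphi_\fk+\varphi_\fm$ and $\rho\varphi=\rho\varphi_\fk+\rho\varphi_\fm$ using $\fp^\C=\fk^\C\oplus\fm^\C$. Since $\nabla$ is $\ft$-valued and $\ft\subset\fh$, the $\fh$- and $\fm$-valued parts of the flat connection are
\[
D=\nabla+\varphi_\fk+\rho\varphi_\fk, \qquad \Psi=\varphi_\fm+\rho\varphi_\fm.
\]
Splitting by type and using that $\varphi$ is of type $(1,0)$ while $\rho\varphi$ is of type $(0,1)$, one reads off
\[
\Phi=\varphi_\fm, \qquad \bar\partial_E=\nabla''+\rho\varphi_\fk,
\]
as claimed. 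The main obstacle is making the second step rigorous, i.e.\ identifying the $\fh$-component $D=\nabla+\theta_\fk$ of the flat connection with the pull-back of the canonical $H$-connection on $G\to G/H$. This reduces to comparing the canonical connection forms $\omega_\ft$ on $G\to G/T$ and $\omega_\fh=\omega_\ft+\omega_{\fh\cap\fp}$ on $G\to G/H$ and observing that $\psi^*\omega_{\fh\cap\fp}=\theta_\fk=\varphi_\fk+\rho\varphi_\fk$; once this is in place, the type decomposition in the third step is routine and the verification that $\bar\partial_E\Phi=0$ reduces to the $\fm^\C$-component of $\nabla''\varphi+[\rho\varphi\wedge\varphi]=0$, which is implied by \eqref{eq:F1} and \eqref{eq:F2}.
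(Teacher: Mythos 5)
Your proposal is correct and follows essentially the same route as the paper: identify $P=f^{-1}G\simeq Q\times_T H$ so that $E\simeq Q^\C(\fm^\C)$ smoothly, then compare the two splittings $\omega_G=\omega_\ft+\omega_\fp=\omega_\fh+\omega_\fm$ of the Maurer--Cartan form to get $D-\nabla=(\psi^*\beta_1)_\fk=\varphi_\fk+\rho\varphi_\fk$ and $f^*\beta_2=(\psi^*\beta_1)_\fm$, from which the holomorphic structure $\nabla''+\rho\varphi_\fk$ and $\Phi=\varphi_\fm$ follow by type decomposition. The step you flag as the ``main obstacle'' is exactly the step the paper carries out, and your sketch of it ($\psi^*\omega_{\fh\cap\fp}=\theta_\fk$) is the paper's argument.
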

Note that the assumption that $(Q^\C,\varphi)$ corresponds to a primitive harmonic map implies that $(E,\Phi)$ is a
polystable $G$-Higgs bundle since it corresponds to the harmonic map $f$.
\begin{proof}
Recalling \S \ref{ss:homog}, the connections and Maurer-Cartan $1$-forms of respectively $G/T$ and $G/H$ come from the two ways of splitting 
the left Maurer-Cartan form of $G$:
\[
\omega_G = \omega_\ft+\omega_\fp = \omega_\fh+\omega_\fm.
\]
Let us write $\beta_1:T(G/T)\to G\times_\ft\fp$ and $\beta_2:T(G/H)\to G\times_\fh\fm$ for the respective Maurer-Cartan
$1$-forms of these spaces. 
By definition $Q=\psi^{-1}G/\pi_1\Sigma$ and $\varphi+\rho\varphi = \psi^*\beta_1$. Define $P=f^{-1}G/\pi_1\Sigma$, 
then $P\simeq Q\times_TH$ and hence $E\simeq Q^\C(\fm^\C)$ as a smooth bundle. 
This bundle carries two connections: a connection $D$ from $\omega_\fh$ and, since $\Ad_T$ preserves the splitting $\fp=\fk+\fm$, the
connection $\nabla$ on $Q^\C(\fp^\C)$ coming from $\omega_\ft$ restricted to $E$. It follows that
\[
D-\nabla = (\psi^*\beta_1)_\fk = (\varphi+\rho\varphi)_\fk.
\]
Therefore the holomorphic structure of $E$ is $D''=\nabla'' +\rho\varphi_{\fk}$.  
Using a local lift $F:U\to G$ of $\psi$, and hence of $f$, it is easy to see that $f^*\beta_2 = (\psi^*\beta_1)_\fm$ and
therefore
\[
\Phi=(f^*\beta_2)'= (\varphi+\rho\varphi)_\fm' = \varphi_\fm.
\]
\end{proof}
\begin{rem}\label{rem:Higgs}
In general we need to know $Q\subset Q^\C$, i.e.\ have solved the geometric Toda equations, to find $\rho\varphi_\fk$ to
obtain the holomorphic structure of $E$. However, when $\varphi_\fk =0$ (equally $\varphi=\varphi_\fm$), which includes 
the totally noncompact case, the holomorphic 
structure of $E$ comes directly from $Q^\C$ without solving the Toda equations, and $\Phi=\varphi$. 
\end{rem}
An important class of $G$-Higgs bundles are those which are invariant under the $\C^\times$ action $e^t\cdot
(E,\Phi)=(E,e^t\Phi)$. These are usually referred to as \emph{variations of Hodge structure} (or simply \emph{Hodge
bundles}).   
\begin{prop}\label{prop:Hodge}
Suppose $(E,\Phi)$ is a $G$-Higgs bundle obtained from a Toda pair $(Q^\C,\varphi)$ in the manner above. 
Then $(E,\Phi)$ is a Hodge bundle if and only if $(Q^\C,\varphi)$ is non-cyclic.
\end{prop}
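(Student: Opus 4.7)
The plan is to translate the Hodge bundle condition into an algebraic condition on a constant element $z_0 \in \ft^\C$ and then separate the cyclic from the non-cyclic cases using the affine linear relation $\sum_{j=0}^r m_j \alpha_j = 0$.

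\emph{Setup.} A polystable $G$-Higgs bundle $(E,\Phi)$ is a Hodge bundle if and only if some one-parameter family of holomorphic automorphisms of the underlying $H^\C$-bundle $P^\C$ rescales $\Phi$; this is equivalent to the existence of a holomorphic section $z$ of $P^\C(\fh^\C)$ satisfying $[z,\Phi]=\Phi$. By Proposition \ref{prop:Higgs}, $P^\C(\fh^\C)$ identifies smoothly with $Q^\C(\fh^\C)$ carrying the holomorphic structure $\nabla''+[\rho\varphi_\fk,\cdot]$.

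\emph{Reduction to $\ft^\C$.} Since $\sigma$ commutes with $\tau$ the subalgebra $\fh^\C$ is $\tau$-invariant, and since $\ft\subset\fh$ one has $\fh^\C=\ft^\C\oplus\bigoplus_{l\neq0\bmod m}(\fh^\C\cap\fg^\tau_l)$. Decompose $z=\sum_l z_l$ accordingly. Because $[z_l,\Phi]$ lies in the $\fg^\tau_{l+1}$-component, and $\Phi\in\fg^\tau_1$, comparing $\tau$-graded components of $[z,\Phi]=\Phi$ forces in particular $[z_0,\Phi]=\Phi$. Since $\Ad T^\C$ acts trivially on $\ft^\C$, the bundle $Q^\C(\ft^\C)$ is holomorphically trivial, so $z_0$ is a constant element of $\ft^\C$. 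Holomorphicity of $z_0$ with respect to $\nabla''+[\rho\varphi_\fk,\cdot]$ additionally requires $[z_0,\rho\varphi_\fk]=0$. Written in the root basis these two conditions become
\begin{equation*}
\alpha_j(z_0)=1\ \text{for all}\ j\in I_-\ \text{with}\ \varphi_j\not\equiv0,\qquad \alpha_j(z_0)=0\ \text{for all}\ j\in I_+\ \text{with}\ \varphi_j\not\equiv0.
\end{equation*}

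\emph{Cyclic case.} If $(Q^\C,\varphi)$ is cyclic then $\varphi_j\not\equiv0$ for every $j\in I$, so the constraints cover the whole index set. Pairing $z_0$ with the affine relation $\sum_{j=0}^r m_j\alpha_j=0$ yields $\sum_{j\in I_-} m_j=0$, which is impossible because every $m_j>0$ and $I_-\neq\emptyset$ (the latter because at least one simple affine root must be noncompact). Hence no such $z_0$, and therefore no such $z$, exists, so $(E,\Phi)$ is not a Hodge bundle.

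\emph{Non-cyclic case.} Let $J=\{j\in I:\varphi_j\not\equiv0\}\subsetneq I$. If $0\notin J$, the constraints involve only the linearly independent $\alpha_1,\ldots,\alpha_r$ and a solution $z_0\in\ft^\C$ clearly exists. If $0\in J$, pick $j^*\in\{1,\ldots,r\}\setminus J$; the value $\alpha_{j^*}(z_0)$ is free, and can be chosen via $\alpha_0=-\sum_{k\geq1}m_k\alpha_k$ so that the required value of $\alpha_0(z_0)$ is realised. With this $z_0$, the family $g_t=\exp(tz_0):\Sigma\to T^\C$ is a constant gauge transformation of $Q^\C$ which preserves $\nabla''$ and $\rho\varphi_\fk$ (hence the holomorphic structure of $E$) while scaling $\Phi=\varphi_\fm$ by $e^t$. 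This exhibits $(E,\Phi)$ as a Hodge bundle.

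\emph{Main obstacle.} The substantive step is the reduction from a general holomorphic section $z$ of $P^\C(\fh^\C)$ to a constant element of $\ft^\C$. This rests on two inputs: the $\tau$-grading, which isolates the $\fg^\tau_1$-component of $[z,\Phi]=\Phi$ and eliminates all $z_l$ with $l\neq0$; and the triviality of $Q^\C(\ft^\C)$, which forces the surviving piece $z_0$ to be constant. Once this reduction is in place the remainder is a short linear computation on the affine Cartan data.
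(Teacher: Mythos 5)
Your argument is the infinitesimal version of the paper's proof: the paper works with a one-parameter family of gauge transformations $g_t$ and reduces, ``without loss of generality'', to constant $g_t=\exp(\gamma_t)$ with $\gamma_t\in\ft^\C$, after which the computation with $\sum_j m_j\alpha_j=0$ and the cyclic/non-cyclic dichotomy is exactly yours. Your non-cyclic direction is complete and matches the paper's (you are in fact slightly more careful than the paper in separating the conditions on $J_+$ from those on $J_-$ when building $z_0$).

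The step you yourself single out as the crux --- the reduction to a constant $z_0\in\ft^\C$ with $[z_0,\rho\varphi_\fk]=0$ --- is not justified by the reasons you give. The holomorphic structure on $P^\C(\fh^\C)$ is $D''=\nabla''+\ad\rho\varphi_\fk$, and $\ad\rho\varphi_\fk$ shifts the $\tau$-grading by $-1$; hence the graded components of $D''z=0$ are the coupled equations $\nabla'' z_l+[\rho\varphi_\fk,z_{l+1}]=0$. In particular $\nabla'' z_0=-[\rho\varphi_\fk,z_1]$, so $z_0$ is not a $\nabla''$-holomorphic section of the trivial bundle $Q^\C(\ft^\C)$ and need not be constant; likewise $[z_0,\rho\varphi_\fk]=-\nabla'' z_{-1}$ need not vanish. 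What survives without further work is the pointwise identity $[z_0,\Phi]=\Phi$, which gives $\alpha_j(z_0)\equiv 1$ for $j\in J_-$; this already kills the cyclic case when $I_+=\emptyset$ (e.g.\ totally noncompact pairs, where $\varphi_\fk=0$ and your reduction is airtight), but when $I_+\cap J\neq\emptyset$ the relation $\sum_j m_j\alpha_j(z_0)=0$ only yields $\sum_{j\in I_+}m_j\alpha_j(z_0)=-\sum_{j\in I_-}m_j$, and you still need $\alpha_j(z_0)=0$ on $I_+$ --- precisely the part whose justification fails. To be fair, the paper's own proof performs the same reduction by fiat; but since you flag this as the substantive step, it needs an actual argument (for instance an integration-by-parts vanishing argument using $\nabla'(\rho\varphi_j)=0$ once constancy is established, or an appeal to the structure of the automorphism group of a polystable Higgs bundle).
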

\begin{proof}
$(E,\Phi)$ is a Hodge bundle when there is a one-parameter family of gauge transformations $g_t:\Sigma\to H^\C$  for which
\[
\Ad g_t\cdot D'' = D'',\quad \Ad g_t\cdot\Phi = e^t\Phi.
\]
Hence $\Ad g_t$ must restrict to a holomorphic section on each $\End(Q_j(\C))\simeq\uC$ for which $\varphi_j\neq 0$. Therefore without
loss of generality we may assume $g_t$ is constant over $\Sigma$ and $g_t=\exp(\gamma_t)$ for $\gamma_t\in\ft^\C$. This must satisfy
\[
\Ad \exp(\gamma_t)\cdot \varphi_\fm = e^t\varphi_\fm.
\]
Let $J=\{j\in I:\varphi_j\neq 0\}$ and write its partition as $J=J_+\cup J_-$. Then for the equations above to hold we require
integers $k_j\in \Z$, $j\in J$, such that
\begin{equation}\label{eq:alphas}
\alpha_j(\gamma_t) = t + 2\pi  ik_j,\quad j\in J_-.
\end{equation}
If $J=I$ (the cyclic case) then these must satisfy the single equation
\[
0=\sum_{j=0}^rm_j\alpha_j(\gamma_t) = 2\pi i\sum_{j=0}^rm_jk_j + t\sum_{j\in I_-}m_j.
\]
This cannot hold for all $t$ since $\sum_{j\in I_-}m_j\neq 0$.
However, if $J\neq I$ then there is no relation between the $\alpha_j$ for $j\in J$ and therefore \eqref{eq:alphas} can 
be solved for any $t$ (e.g., for all $j\in J$ take $k_j=0$ and choose $\gamma_t=t\xi$ where $\alpha_j(\xi)=1$ for $j\in J$). 
\end{proof}
\begin{exam}
We can illustrate Propositions \ref{prop:Higgs}, \ref{prop:Hodge} and Remark \ref{rem:totally} using the example of $\fa_2^{(1)}$. Let $E_{jk}\in
\fgl(3,\C)$ be the matrix whose only non-zero entry is a $1$ in the $j$-th row and $k$-column. Define
\begin{eqnarray*}
&h_1=E_{22}-E_{11},\ h_2=E_{33}-E_{22},\ h_0=E_{11}-E_{33},\\
&e_1 = E_{21},\ e_2 = E_{32},\ e_0 = E_{13},\quad
 f_1 = E_{12},\ f_2= E_{23},\ f_0 = E_{31}.
\end{eqnarray*}
Then $\{h_j,e_j,f_j:j=0,1,2\}$ are the generators satisfying the relations \eqref{eq:Weyl} for the extended Cartan matrix
\[
\hat C = \begin{pmatrix} 2 & -1 & -1\\ -1 & 2 & -1 \\ -1&-1&2\end{pmatrix}.
\]
There is only one admissible labelling
$\ell:\{0,1,2\}\to\{\pm 1\}$, up to the $\Z_3$ symmetry of the affine diagram
(see Table \ref{table:diagrams}), and we choose this so that $\ell_0=-1$, $\ell_1=1$, $\ell_2 =-1$. The corresponding
Cartan decomposition is schematically
\[
\fh^\C = \left\{\begin{pmatrix} * & * & 0\\ * & * & 0\\ 0&0&*\end{pmatrix}\right\},\quad
\fm^\C = \left\{\begin{pmatrix} 0 & 0 & *\\ 0 & 0 & *\\ *&*&0\end{pmatrix}\right\}.
\]
The noncompact real form is $\fsu(2,1)$ and therefore the noncompact symmetric space is $PU(2,1)/U(2)$. 
The corresponding geometric Toda equations are 
\begin{eqnarray}\label{eq:a21Toda}
\Delta_\Sigma w_0 & = & -\|\varphi_1\|^2 e^{-w_0-w_2} -2 \|\varphi_0\|^2e^{w_0} +\|\varphi_2\|^2e^{w_2} - d_0,\\
\Delta_\Sigma w_2 & = & -\|\varphi_1\|^2 e^{-w_0-w_2} + \|\varphi_0\|^2e^{w_0} -2\|\varphi_2\|^2 e^{w_2}- d_2,\notag
\end{eqnarray}
suppressing the redundant equation for $w_1=-w_0-w_2$. 

It will be convenient to think of each $PU(2,1)$-Higgs bundle as a projective equivalence class of rank $3$ Higgs vector bundles,
of the form $(V\oplus\uC,\Phi)$ where $V$ is a holomorphic rank $2$ bundle, and the Higgs field $\Phi$ is a holomorphic section of 
\[
(\Hom(\uC,V)\oplus \Hom(V,\uC))\otimes K\simeq Q^\C(\fm^\C)\otimes K.
\]
Using the representation of $\fa_2$ above and Prop.\ \ref{prop:Higgs} $V$ is isomorphic 
to $Q_0(\C)\oplus Q_2(\C)^{-1}$ equipped with the holomorphic structure
\begin{equation}\label{eq:dbar}
\begin{pmatrix} \bar\partial_0 & \rho\varphi_1\\ 0 & \bar\partial_2^*\end{pmatrix},
\end{equation}
where $\bar\partial_j$ are $\bar\partial$-operators for the holomorphic structures on $Q_j(\C)$ (with $\bar\partial_j^*$
for the dual line bundle). The Higgs field is 
\[
\Phi =\begin{pmatrix} 0&0&\varphi_0\\ 0&0&0\\ 0&\varphi_2&0\end{pmatrix}.
\]
All such Higgs fields have $\Tr(\Phi^2)=0$ and therefore lie in the nilpotent cone of the $PU(2,1)$-Higgs bundle moduli
space. 

The labelling means the pair $(\tau,\sigma)$ is not totally noncompact, so 
unless $\varphi_1=0$ we cannot apply the existence results of \S \ref{sec:pairs}.
In fact we know from \cite[Prop.\ 5.8]{LofM13} that with the restrictions $w_0=w_2$, $Q_j(\C)\otimes K_\Sigma\simeq\uC$ for 
$j=0,2$, and $\|\varphi_0\|=\|\varphi_2\|=1$ (which produces a version of the $\fa_2^{(2)}$ equations for labelling $(\ell_0,\ell_1)=(1,-1)$) 
there is no Toda solution if $\|\varphi_1\|^2$ is too large. But when $\varphi_1=0$ we can apply
Remark \ref{rem:totally} to Prop.\ \ref{prop:non-affine}. The relevant matrix $R=(R_{jk})$ in the inequalities \eqref{eq:Rd}
is easily computed to be
\[
R = \tfrac23\begin{pmatrix} 2 & 1\\ 1 & 2\end{pmatrix}.
\]
With this matrix the inequalities \eqref{eq:Rd} are 
\begin{equation}\label{eq:d0d2}
-2(g-1)\leq d_0,\quad -2(g-1)\leq d_2,\quad 2d_0+d_2 <0,\quad d_0+2d_2<0.
\end{equation}
which are equivalent to the conditions given in \cite{LofM18} for the Hodge bundle with $\varphi_1=0$ (and $\varphi_j\neq 0$, $j=0,2$) to be
a stable Higgs bundle. The corresponding maps $f:\tilde\Sigma\to \CH^2\simeq PU(2,1)/U(2)$ are exactly the superminimal maps which are not
$\pm$-holomorphic \cite[\S 5.2]{LofM18}.
\end{exam}
\begin{rem}\label{rem:cyclotomic}
In \cite{Sim09} Simpson introduced the notion of a $k$-cyclotomic Higgs bundle.  This is a $G^\C$-Higgs bundle $(E,\Phi)$, with $\Phi\in
H^0(\fg^\C\otimes K_\Sigma)$, which is a fixed point of the action of $\Z_k$ as a subgroup of $\C^\times$. 
It is not hard to see that the Higgs bundles in Prop.\ \ref{prop:Higgs} are only $m$-cyclotomic when $\varphi_\fk=0$. 
\end{rem}

\subsection{Baraglia's cyclic Higgs bundles.}

In \cite{Bar} Baraglia pointed out that certain real forms of the Toda equations arise from special cases within 
the family of Higgs bundles constructed by Hitchin in \cite{Hit}. In our language Baraglia's examples correspond
to totally noncompact Toda pairs for an inner Coxeter automorphism when $Q_j(\C)\simeq
K_\Sigma^{-1}$ for $j=1,\ldots,r$. In that case $\varphi_j$ is constant for $j\neq 0$ and $\varphi_0$ is a holomorphic
differential, a section of
\[
Q_0(\C)\otimes K_\Sigma \simeq \otimes_{j=0}^n K_\Sigma^{m_j} = K_\Sigma^{m/n}.
\]
However, the real form in Hitchin's construction is always the
split real form and for Lie algebras of type $\fa_r$, $\fd_r$ and $\fe_6$ this is the real form for an outer Cartan
involution. But Hitchin's construction also implicitly applies $\nu$-symmetry upon the Higgs field, and therefore
even though Baraglia's examples satisfy the inner version of the Toda equations they also provide harmonic maps into
the outer symmetric space of the split real form: Baraglia's explanation of this is essentially the same as the mechanism 
described in \S \ref{sec:innerouter} above. 
The purpose of this section is to show what happens if we drop the insistence that the Coxeter automorphism be inner.
By Table \ref{table2} (or Remark \ref{rem:innerouter}) the outer Coxeter automorphisms provide different Toda pairs from 
Baraglia's except when $\fg^\C\simeq \fa_{2r}$.

Suppose $(\tau,\sigma)$ is a totally noncompact pair.
We begin by adapting Kostant's construction of the principal three dimensional subgroup \cite{Kos} to make it compatible
with the $\Z_m$ grading for either inner or outer Coxeter automorphisms. For any quadruple $(\fg^\C,\nu,\ft^\C,\bar B)$ 
let $x\in i\ft\subset\fg_0^\tau$ be the 
unique element for which $\alpha_j(x)=1$ for $j=1,\ldots,r$. Then there
are positive constants $c_j\in\R$ for which $x=\sum_{j=1}^r c_j h_j$. Define $e\in\fg_1^\tau$ by
\[
e = \sum_{j=1}^r \sqrt{c_j}e_j,
\]
and set $f=\rho e$.  Then the triple $\{x,e,f\}$ satisfies
\[
[x,e]=e,\quad [x,f] = -f,\quad [e,f]=x,
\]
and generates a principal three dimensional subalgebra $\fs^\C\subset \fg_0^\nu$. In fact from Appendix \ref{app:roots} we see that 
$e=\sum_{j=1}^l a_jE_j$, $a_j\neq 0$, with respect to canonical generators for $\fg^\C$ and therefore  $e$ is also principal nilpotent
for $\fg^\C$. Hence $\fs^\C$ is also a principal three dimensional subalgebra for $\fg^\C$. 

Now take
$Q_j(\C)= K_\Sigma^{-1}$ for $j=1,\ldots,r$ and $\varphi = e$. Note that because we normalised the K\"ahler
metric on $\Sigma$ to have constant curvature $2-2g$ the initial metric for this choice of $Q_j(\C)$ 
is just this K\"ahler metric. This simple non-affine Toda pair $(Q^\C,\varphi)$ is
$0$-stable by Prop.\ \ref{prop:non-affine}. 
\begin{lem}
For $g\geq 2$ the solution to \eqref{eq:geomToda} corresponding to this Toda pair is given by $w_j=\log(2g-2)$ for all
$j=1,\ldots,r$. The metric on each $Q_j(\C)\simeq K_\Sigma^{-1}$ is just the metric on $\Sigma$ of constant curvature $-1$. The
corresponding harmonic map into the symmetric space is a totally geodesic embedding of the Poincar\'e disc into $G/H$.
\end{lem}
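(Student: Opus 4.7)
The plan is to verify the ansatz $w_j = \log(2g-2)$ directly, and then to read off its geometric content. Since $\varphi_0 = 0$ the totally noncompact Toda system \eqref{eq:TodaTotNon} reduces to its simple non-affine form indexed by $j=1,\ldots,r$, with $d_j = \deg K_\Sigma^{-1} = 2-2g$. The constant ansatz kills the Laplacian and reduces the equations to the purely algebraic identity
\[
\sum_{k=1}^r \|\varphi_k\|^2\,\tfrac{|\alpha_k|^2}{2}\,\hat C_{jk} = 1, \qquad j=1,\ldots,r.
\]
Because $Q_k(\C)\otimes K_\Sigma = K_\Sigma^{-1}\otimes K_\Sigma \simeq \uC$ canonically and the Chern metrics on $K_\Sigma^{-1}$ and $K_\Sigma$ are mutually dual (both being induced from the K\"ahler metric of curvature $2-2g$), the canonical trivializing section has unit norm. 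Hence under the identification $\varphi_k\leftrightarrow\sqrt{c_k}e_k$ one has $\|\varphi_k\|^2 = c_k\,\mu(e_k,e_k) = 2c_k/|\alpha_k|^2$ by the calculation made just before \eqref{eq:geomToda}. Substituting collapses the identity to $\sum_k c_k\hat C_{jk} = \alpha_j(x) = 1$, which holds by the very definition of $x$. Uniqueness comes from Proposition \ref{prop:non-affine}: since $x=\sum c_k h_k$ with $c_k>0$, the quantities $2c_k/|\alpha_k|^2$ are the (positive) row sums of $R$, so the degrees $d_k=2-2g<0$ satisfy \eqref{eq:Rd} when $g\geq 2$.

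For the metric claim, Remark \ref{rem:metric} says the Toda-solved metric on each $Q_j(\C)\simeq K_\Sigma^{-1}$ is obtained from the initial one by the constant rescaling $e^{w_j}=2g-2$. Rescaling the K\"ahler metric on $\Sigma$ by this same factor divides its constant curvature by $2g-2$, giving $-1$, and the induced Chern metric on $K_\Sigma^{-1}$ rescales correspondingly; hence the new metric on $Q_j(\C)$ is precisely the one induced by the hyperbolic metric on $\Sigma$.

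For the totally geodesic Poincar\'e disc, the flat $G$-connection $\nabla+\varphi+\rho\varphi$ takes values in the complex three-dimensional subalgebra $\fs^\C=\Span_\C\{x,e,f\}$. The main step, and the principal obstacle, is to identify the real form $\fs=\fs^\C\cap\fg$ as $\fsl_2(\R)$ rather than $\fsu(2)$; this uses the totally noncompact labelling in an essential way, since $\ell(\bar\alpha_k)=-1$ for every $k$ forces $\sigma(e_k)=-e_k$ and $\sigma(f_k)=-f_k$, giving $\rho(x)=-x$ and $\rho(e)=f$. A direct computation then shows the Killing form has indefinite signature $(+,+,-)$ on the $\rho$-fixed basis $\{ix,\ e+f,\ i(e-f)\}$, confirming $\fs\simeq\fsl_2(\R)$ with $\fs\cap\fh=\R\cdot ix \simeq \fso(2)$ and $\fs\cap\fm=\Span_\R\{e+f,\ i(e-f)\}$. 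As a symmetric subpair, $\fs\cap\fm$ is automatically a Lie triple, so the orbit of $eH$ under $S=\exp(\fs)$ is a totally geodesic copy of the Poincar\'e disc $SL_2(\R)/SO(2)$ inside $G/H$. The harmonic map $f$ has holonomy in $S$ and factors through this disc; since both $\tilde\Sigma$ and $SL_2(\R)/SO(2)$ carry the hyperbolic metric, this first factor is a local isometry and coincides with the uniformization map.
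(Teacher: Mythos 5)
Your proof is correct and follows essentially the same route as the paper: verifying the constant ansatz via $\sum_k c_k\hat C_{jk}=\alpha_j(x)=1$, reading off the rescaled metric from Remark \ref{rem:metric}, and exhibiting the real principal subalgebra spanned by $ix$, $e+f$, $i(e-f)$ (isomorphic to $\fsu(1,1)\simeq\fsl(2,\R)$) whose orbit gives the totally geodesic Poincar\'e disc. The only quibble is cosmetic: in the ordered basis $\{ix,\,e+f,\,i(e-f)\}$ the Killing form has signature $(-,+,+)$ rather than $(+,+,-)$, since $ix\in\ft\subset\fu$ has negative Killing norm; indefiniteness is all that is needed, so the conclusion is unaffected.
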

\begin{proof}
Since $\varphi_k=\sqrt{c_k}e_k$ and $\|e_k\|^2=2/|\alpha_k|^2$ the equations \eqref{eq:geomToda} are
\[
\Delta_\Sigma w_j + \sum_{k=1}^r \hat C_{jk} c_ke^{w_k} = 2g-2,\quad j=1,\ldots,r.
\]
Now 
\[
\sum_{k=1}^r\hat C_{jk}c_k = \sum_{k=1}^r 2\frac{\<\alpha_j,\alpha_k\>}{|\alpha_k|^2}c_k = \alpha_j(x)=1.
\]
Therefore taking $w_j=\log(2g-2)$ for all $j$ is a solution. Therefore the solution metric on each $Q_j(\C)$ is the
K\"ahler metric of $(2g-2)\omega_\Sigma$, which has constant curvature $-1$. 

Now $ix$, $e+f$ and $i(e-f)$ are $\rho$-fixed and generate a Lie subalgebra $\fs\subset\fg$ 
isomorphic to $\fsu(1,1)$. Let $S<G$ be the corresponding subgroup, with $T_1<T$ the $U(1)$ subgroup tangent to
$ix$. Then $S/T_1\simeq SU(1,1)/U(1)$ is isometric to the Poincar\'e disc and the map $\psi:S/T_1\to G/T$ is the primitive
harmonic map determined by the Toda pair above. Its projection $f:S/T_1\to G/H$ is a totally geodesic embedding.
\end{proof}
To obtain the cyclic version assume $g\geq 1$ and take $\varphi = e + \varphi_0e_0$ where $\varphi_0\in H^0(K_\Sigma^{m/n})$
is non-trivial.  The equations then take the form \eqref{eq:KToda}.
The Higgs bundle $(E,\Phi)$ obtained from this Toda pair has $\Phi=\varphi$ by Prop.\ \ref{prop:Higgs}.
When $\tau$ is inner these are Baraglia's cyclic Higgs bundles \cite{Bar}. 
When $\tau$ is outer the real form is split (see Table \ref{table1}) and the Higgs bundle still fits into 
Hitchin's construction \cite{Hit}, because $[e_0,f]=0$ means $e_0$ is a lowest weight vector for the representation of
of $\fs^\C$, but by Table \ref{table2} it is only the root vector for a lowest root of $\fg^\C$ when $\fg^\C\simeq \fa_{2r}$.

For the $\fa_{2k-1}^{(1)}$, $\fd_r^{(1)}$ and $\fe_6^{(1)}$ cases the real form is not split but $(Q,\varphi)$ is
$\nu$-invariant and the construction of \S \ref{sec:innerouter} applies. This means that $(Q^\C,\varphi)$ 
produces two different types of harmonic map: one into
an inner symmetric space $G/H$ and one into an outer symmetric space $G'/H'$. 
As explained in \S \ref{sec:innerouter} these two maps have the same holonomy representation, taking values in $G\cap G'$. 
The maps into $G'/H'$ come
from Hitchin's construction and their holonomy representations are known as Hitchin representations. 
\begin{exam}
To illustrate the differences between inner and outer Toda equations for the totally noncompact Toda pairs above, let us compare the 
equations for the diagrams $\fa_3^{(1)}$ and $\fa_3^{(2)}$ in the case where the former has the $\nu$-symmetry imposed.

For $\fa_3^{(1)}$ with the $\nu$-symmetry implies $w_3=w_1$ and $w_0=-w_2-2w_1$. A computation gives $c_1=3/2$ and
$c_2=2$. Choosing an arbitrary $\varphi_0\in H^0(K_\Sigma^4)$ gives the Toda equations 
\begin{eqnarray}
\Delta_\Sigma w_1 + 3e^{w_1} -2e^{w_2} - \|\varphi_0\|^2e^{-2w_1-w_2} & = & 2g-2, \notag\\
\Delta_\Sigma w_2 - 3e^{w_1} +4e^{w_2} & = & 2g-2.
\end{eqnarray}
In the standard representation of $\fsl(4,\C)$ we can write the Higgs field as
\[
\begin{pmatrix} 0&\sqrt{c_1} &0&0\\ 0&0&\sqrt{c_2}&0\\ 0&0&0&\sqrt{c_1}\\ \varphi_0 &0&0&0\end{pmatrix}.
\]
These equations correspond to primitive harmonic maps into $PU(2,2)/U(1)^3$ which project down onto the 
inner symmetric space $PU(2,2)/P(U(2)\times U(2))$. But the additional $\nu$-symmetry means this also produce a 
map, as in Baraglia's construction \cite{Bar}, into an outer symmetric space isomorphic to $PSL(4,\R)/H$ where $H\simeq
SO(4)/\{\pm I_4\}$. 

For $\fa_3^{(2)}\simeq\fd_3^{(2)}$ we have $w_0=-w_1-w_2$ and $c_1 = 2$, $c_2=3/2$. Choosing an arbitrary $\varphi_0\in H^0(K_\Sigma^3)$ gives
the Toda equations
\begin{eqnarray}
\Delta_\Sigma w_1 + 3e^{w_1} -2e^{w_2} - \|\varphi_0\|^2e^{-2w_1-w_2} & = & 2g-2, \notag\\
\Delta_\Sigma w_2 - 2e^{w_1} +3e^{w_2} & = & 2g-2.
\end{eqnarray}
In the standard representation the Higgs field for this case is
\[
\begin{pmatrix} 0&\sqrt{c_1} &0&0\\ 0&0&\sqrt{c_2}&0\\ \varphi_0&0&0&\sqrt{c_1}\\ 0 &\varphi_0&0&0\end{pmatrix}.
\]
The Toda pair gives a primitive harmonic map into $PSL(4,\R)/T$ where $T<H$ is a maximal torus, and this projects
to the harmonic map into $PSL(4,\R)/H$ given by Hitchin's construction 
using the Higgs field above.
\end{exam}

\appendix

\section{Simple affine roots when $\nu$ is not the identity.}\label{app:roots}

Given $\fg^\C$ fix a Cartan subalgebra $\fc\subset\fg^\C$ with root system $R\subset\fc^*$. Choose simple roots 
$\beta_1,\ldots,\beta_l\in R$ and
canonical generators $\{H_j,E_j,F_j:1,\ldots,l\}$ which satisfy
\[
[E_j,F_k]=-\delta_{jk}H_j,\quad [H_j,E_k]= C_{kj}E_k,\quad [H_j,F_k]=-C_{jk}F_k,
\]
where $C_{jk}$ are the entries of the Cartan matrix for $\fg^\C$. Let $\nu\in\Aut(\fg^\C)$ be non-trivial and coming from a
symmetry of the Dynkin diagram of $\fg^\C$. Specifically
\[
\beta_{\nu(j)}=\nu^*\beta_j,\ \nu(H_j)=H_{\nu(j)},\ \nu(E_j)=E_{\nu(j)},\ \nu(F_j)=F_{\nu(j)},
\]
where we are also using $\nu$ to denote the symmetry of the labelled vertices on the Dynkin diagram. 
Let $\delta\in R$ denote the highest root.

Our aim is to describe for each such $\nu$ the affine roots
$\bar\alpha_j$, $j=0,\ldots,r$, and their root spaces $\fg^{\bar\alpha_j}$. Most of this information comes directly 
from Helgason \cite[Ch.\ X,\S 5]{Hel}
and the rest follows by fairly straightforward calculation from the same source.  
Let $\ft^\C$ denote the $\nu$-fixed subspace of $\fc$
and $\hat\beta$ denote the restriction of any root $\beta$ to $\ft^\C$. 

First we describe the affine roots for $j\geq 1$, which have the form $(\alpha_j,0)$.
For the $\fa$-type and $\fd$-type cases where $\nu$ is an involution, 
if $\nu(j)=j$ then $\bar\alpha_j=\hat\beta_j$ is a simple
affine root, with root space $\fg^{\bar\alpha_j}=\fg^{\beta_j}$. If $j<\nu(j)$ then $\alpha_j=\hat\beta_j$ but
the root space is
\[
\fg^{\bar\alpha_j}=\Span_\C\{E_j+\nu(E_j)\}.
\]

For $\fe_6^{(2)}$ the involution is $\nu(1)=6$, $\nu(3)=5$, $\nu(2)=2$, $\nu(4)=4$.  We set 
\[
\alpha_1 = \beta_1,\ \alpha_2=\beta_3,\ \alpha_3=\beta_4,\ \alpha_4 = \beta_2.
\]
The root spaces are
\[
\fg^{\bar\alpha_1} = \Span_\C\{E_1+\nu(E_1)\},\ \fg^{\bar\alpha_2} = \Span_\C\{E_3+\nu(E_3)\},\
\fg^{\bar\alpha_3}=\fg^{\beta_4},\ \fg^{\bar\alpha_4} = \fg^{\beta_2}.
\]
%Then 
%\[
%\bar\alpha_0= (-2\alpha_1-3\alpha_2-2\alpha_3-\alpha_4,1) = (\hat\beta_0,1), 
%\]
%where
%\[
%\beta_0 =\beta_2+\beta_3+\beta_4-\delta.
%\]

For $\fd_4^{(3)}$ the $3$-fold symmetry is $\nu(1)=3$, $\nu(2)=2$, $\nu(3)=4$, $\nu(4)=1$. We can take
\[
\alpha_1=\beta_1,\quad \alpha_2 = \beta_2,
\]
with corresponding roots spaces
\[
\fg^{\bar\alpha_1} = \C.\{E_1+\nu(E_1)+\nu^2(E_1)\},\quad \fg^{\bar\alpha_2}=\fg^{\beta_2}.
\]
The highest root of $\fd_4$ is $\delta=\beta_1+2\beta_2+\beta_3+\beta_4$.

It remains to describe $\bar\alpha_0$ for each case.
\begin{prop}
For an affine diagram of type $2$ or $3$, write $\bar\alpha_0=(\hat\beta_0,1)$. Then $\beta_0$ is given by the Table \ref{table2}. 

\begin{table}[h]
\begin{center}
\begin{tabular}{|c|c|}
\hline
Affine diagram & $\beta_0$ \\
\hline
$\fa^{(2)}_{2r}$ & $-\delta$ \\
\hline
$\fa^{(2)}_{2r-1}$ & $\beta_{2r-1}-\delta$  \\
\hline
$\fd^{(2)}_{r+1}$ & $-\sum_{j=1}^r \beta_j$\\
\hline
$\fe^{(2)}_6$ & $\beta_2+\beta_3+\beta_4-\delta$\\
\hline
$\fd^{(3)}_4$ & $\beta_2+\beta_4-\delta$\\
\hline
\end{tabular}
\end{center}
\caption{The roots $\beta_0$ for which $\alpha_0$ is the restriction of $\beta_0$ to $\ft^\C$.}\label{table2}
\end{table}
\end{prop}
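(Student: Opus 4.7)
The plan is to verify each row of Table \ref{table2} using Theorem \ref{thm:affine}. Projecting the affine relation $\sum_{j=0}^r nm_j\bar\alpha_j = (0,0)$ onto its first coordinate, and using $m_0=1$, gives
\[
\alpha_0 = -\sum_{j=1}^r m_j \alpha_j,
\]
while the second coordinate of $\bar\alpha_0$ is $1$, forcing $\fg^{\bar\alpha_0} \subset \fg_1^\nu$. So for each of the five affine diagrams of type $2$ or $3$ it suffices to exhibit a root $\beta_0\in R$ of $\fg^\C$ satisfying (a) $\hat\beta_0 = -\sum_{j=1}^r m_j\alpha_j$, once the $\alpha_j$ are written via the restricted-root identifications given at the start of this appendix and the $m_j$ are read off the diagrams in Appendix \ref{app:diagrams}, and (b) some element supported on the $\nu$-orbit of $\beta_0$ lies in $\fg_1^\nu$.

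For (b) the generic principle is that if the $\nu$-orbit of $\beta_0$ has size $n$ and $\nu$ permutes the corresponding root vectors $\nu^k(E_{\beta_0})$ cyclically, then
\[
E := \sum_{k=0}^{n-1} e^{-2\pi ik/n}\,\nu^k(E_{\beta_0}) \;\in\; \bigoplus_{k=0}^{n-1} \fg^{(\nu^*)^k\beta_0}
\]
is a nonzero element of $\fg_1^\nu$ which spans $\fg^{\bar\alpha_0}$. This covers the cases $\fa_{2r-1}^{(2)}$, $\fd_{r+1}^{(2)}$, $\fe_6^{(2)}$ and $\fd_4^{(3)}$, in each of which the claimed $\beta_0$ has $\nu$-orbit of full size $n$. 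The case $\fa_{2r}^{(2)}$ is exceptional: $\delta$ is $\nu$-invariant, but in the normalization of $\nu$ used to realize $\fa_{2r}^{(2)}$ (chosen so that $\fg_0^\nu\simeq\fb_r$) the automorphism acts by $-1$ on $\fg^{\pm\delta}$, placing $F_\delta$ directly in $\fg_1^\nu$.

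For (a) the verification is a finite calculation in each root system: once the affine coefficients $m_j$ and the identifications of $\alpha_j$ as restrictions $\hat\beta_k$ (or, in the exceptional cases, as sums of such restrictions) are substituted, one obtains $-\sum m_j\alpha_j$ as an explicit linear combination in $(\ft^\C)^*$ which one compares with $\hat\beta_0$ for the entry in Table \ref{table2}. The main obstacle is the bookkeeping in the exceptional diagrams $\fe_6^{(2)}$ and $\fd_4^{(3)}$, where $\alpha_j$ is itself a sum of several $\hat\beta_k$; a concrete coordinate realization of $R(\fe_6)$ and of $R(\fd_4)$ makes the check routine. Uniqueness of $\bar\alpha_0$ up to Dynkin diagram symmetry, via Prop.\ \ref{prop:classification}, then ensures that Table \ref{table2} exhausts the possibilities.
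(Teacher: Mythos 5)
Your verification (a) is exactly the paper's proof: using $\alpha_0=-\sum_{j=1}^r m_j\alpha_j$ and the fact that each $\alpha_j$, $j\geq 1$, is the restriction $\hat\beta_k$ of a single simple root, one checks $\hat\beta_0=-\sum_{j=1}^r m_j\hat\beta_j$ case by case from the labelled affine diagrams; your step (b), the twisted average $\sum_{k=0}^{n-1}e^{-2\pi ik/n}\nu^k(E_{\beta_0})$, simply reproduces the paper's description of $\fg^{\bar\alpha_0}$ given immediately after the proposition and is not logically required, since Theorem \ref{thm:affine} already guarantees that $(\alpha_0,1)\in\bar R$, so any root of $\fg^\C$ restricting to $\alpha_0$ will do. Two small slips that do not affect the argument: even in the exceptional cases $\fe_6^{(2)}$ and $\fd_4^{(3)}$ no $\alpha_j$ with $j\geq 1$ is a \emph{sum} of restrictions (only a relabelled single $\hat\beta_k$), and the closing appeal to Prop.\ \ref{prop:classification} is both unnecessary and the wrong reference, as the uniqueness of $\bar\alpha_0$ comes from Theorem \ref{thm:affine}(1).
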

\begin{proof}
Since $\alpha_0=-\sum_{j=1}^rm_j\alpha_j$ and $\alpha_j=\hat\beta_j=\hat\beta_{\nu(j)}$ for $j\leq \nu(j)$, it suffices to check that 
Table \ref{table2} satisfies 
\[
\hat\beta_0 = -\sum_{j=1}^r m_j\hat\beta_j.
\]
In each case this can be read off the labelled affine diagrams below.
\end{proof}
We finish by describing the corresponding root spaces $\fg^{\bar\alpha_0}\subset\fg_1^\nu$.
For $\fa^{(2)}_{2r}$ clearly $\nu(\beta_0)=\beta_0$ so $\fg^{\bar\alpha_0}=\fg^{\beta_0}=\fg^{-\delta}$. For all the other
cases $\nu(\beta_0)\neq \beta_0$ so that
\[
\fg^{\bar\alpha_0} = \Span_\C\{E_{\beta_0}-\nu(E_{\beta_0})\}
\]
for $n=2$ and
\[
\fg^{\bar\alpha_0} =\Span_\C\{E_{\beta_0} + \omega^2 \nu(E_{\beta_0}) + \omega\nu^2(E_{\beta_0})\}
\]
for $n=3$ with $\omega = e^{2\pi i/3}$.

\newpage
\section{Table of affine diagrams.}\label{app:diagrams}

\begin{figure}[h]\label{table:diagrams}
\begin{tikzpicture}[scale=0.9]

%LEFT SIDE
\draw(-0.5,0) node {$\fe_8^{(1)}$};
\draw(1,0) circle (5pt);
\draw(2,0) circle (5pt);
\draw(3,0) circle (5pt);
\draw(4,0) circle (5pt);
\draw(5,0) circle (5pt);
\draw(6,0) circle (5pt);
\draw(7,0) circle (5pt);
\draw(8,0) circle (5pt);
\draw(6,1) circle (5pt);
\draw(1.2,0)--(1.8,0);
\draw(2.2,0)--(2.8,0);
\draw(3.2,0)--(3.8,0);
\draw(4.2,0)--(4.8,0);
\draw(5.2,0)--(5.8,0);
\draw(6.2,0)--(6.8,0);
\draw(7.2,0)--(7.8,0);
\draw(6,0.20)--(6,0.80);
\draw(1,-0.4) node {\footnotesize{$\alpha_0$}};
\draw(2,-0.4) node {\footnotesize{$\alpha_8$}};
\draw(3,-0.4) node {\footnotesize{$\alpha_7$}};
\draw(4,-0.4) node {\footnotesize{$\alpha_6$}};
\draw(5,-0.4) node {\footnotesize{$\alpha_5$}};
\draw(6,-0.4) node {\footnotesize{$\alpha_4$}};
\draw(7,-0.4) node {\footnotesize{$\alpha_3$}};
\draw(8,-0.4) node {\footnotesize{$\alpha_1$}};
\draw(6.5,1) node {\footnotesize{$\alpha_2$}};

\draw(1,0) node {\tiny{$1$}};
\draw(2,0) node {\tiny{$2$}};
\draw(3,0) node {\tiny{$3$}};
\draw(4,0) node {\tiny{$4$}};
\draw(5,0) node {\tiny{$5$}};
\draw(6,0) node {\tiny{$6$}};
\draw(7,0) node {\tiny{$4$}};
\draw(8,0) node {\tiny{$2$}};
\draw(6,1) node {\tiny{$3$}};

\draw(-0.5,2) node {$\fe_7^{(1)}$};
\draw(1,2) circle (5pt);
\draw(2,2) circle (5pt);
\draw(3,2) circle (5pt);
\draw(4,2) circle (5pt);
\draw(5,2) circle (5pt);
\draw(6,2) circle (5pt);
\draw(7,2) circle (5pt);
\draw(4,3) circle (5pt);
\draw(1,1.6) node {\footnotesize{$\alpha_7$}};
\draw(2,1.6) node {\footnotesize{$\alpha_6$}};
\draw(3,1.6) node {\footnotesize{$\alpha_5$}};
\draw(4,1.6) node {\footnotesize{$\alpha_4$}};
\draw(5,1.6) node {\footnotesize{$\alpha_3$}};
\draw(6,1.6) node {\footnotesize{$\alpha_1$}};
\draw(7,1.6) node {\footnotesize{$\alpha_0$}};
\draw(4.5,3) node {\footnotesize{$\alpha_2$}};
\draw(1.2,2)--(1.8,2);
\draw(2.2,2)--(2.8,2);
\draw(3.2,2)--(3.8,2);
\draw(4.2,2)--(4.8,2);
\draw(5.2,2)--(5.8,2);
\draw(6.2,2)--(6.8,2);
\draw(4,2.2)--(4,2.8);
\draw(1,2) node {\tiny{$1$}};
\draw(2,2) node {\tiny{$2$}};
\draw(3,2) node {\tiny{$3$}};
\draw(4,2) node {\tiny{$4$}};
\draw(5,2) node {\tiny{$3$}};
\draw(6,2) node {\tiny{$2$}};
\draw(7,2) node {\tiny{$1$}};
\draw(4,3) node {\tiny{$2$}};

\draw(-0.5,4) node {$\fe_6^{(1)}$};
\draw(1,4) circle (5pt);
\draw(2,4) circle (5pt);
\draw(3,4) circle (5pt);
\draw(4,4) circle (5pt);
\draw(5,4) circle (5pt);
\draw(3,5) circle (5pt);
\draw(3,6) circle (5pt);
\draw(1,3.6) node {\footnotesize{$\alpha_6$}};
\draw(2,3.6) node {\footnotesize{$\alpha_5$}};
\draw(3,3.6) node {\footnotesize{$\alpha_4$}};
\draw(4,3.6) node {\footnotesize{$\alpha_3$}};
\draw(5,3.6) node {\footnotesize{$\alpha_1$}};
\draw(3.5,5) node {\footnotesize{$\alpha_2$}};
\draw(3.5,6) node {\footnotesize{$\alpha_0$}};
\draw(1.2,4)--(1.8,4);
\draw(2.2,4)--(2.8,4);
\draw(3.2,4)--(3.8,4);
\draw(4.2,4)--(4.8,4);
\draw(3,4.2)--(3,4.8);
\draw(3,5.2)--(3,5.8);
\draw(1,4) node {\tiny{$1$}};
\draw(2,4) node {\tiny{$2$}};
\draw(3,4) node {\tiny{$3$}};
\draw(4,4) node {\tiny{$2$}};
\draw(5,4) node {\tiny{$1$}};
\draw(3,5) node {\tiny{$2$}};
\draw(3,6) node {\tiny{$1$}};

\draw(-0.5,8) node {$\fd_r^{(1)}$};
\draw(-0.5,7.5) node {\scriptsize{$r>3$}};
\draw(1,8) circle (5pt);
\draw(2,8) circle (5pt);
\draw(3,8) node {$\ldots$};
\draw(4,8) circle (5pt);
\draw(5,8) circle (5pt);
\draw(1,7) circle (5pt);
\draw(1,9) circle (5pt);
\draw(5,7) circle (5pt);
\draw(5,9) circle (5pt);
\draw(1,8) node {\tiny{$2$}};
\draw(1,7) node {\tiny{$1$}};
\draw(1,9) node {\tiny{$1$}};
\draw(2,8) node {\tiny{$2$}};
\draw(4,8) node {\tiny{$2$}};
\draw(5,8) node {\tiny{$2$}};
\draw(5,7) node {\tiny{$1$}};
\draw(5,9) node {\tiny{$1$}};
\draw(0.5,8) node {\footnotesize{$\alpha_2$}};
\draw(0.5,7) node {\footnotesize{$\alpha_1$}};
\draw(0.5,9) node {\footnotesize{$\alpha_0$}};
%\draw(2,7.6) node {\tiny{$2$}};
%\draw(4,7.6) node {\tiny{$2$}};
\draw(5.7,8) node {\footnotesize{$\alpha_{r-2}$}};
\draw(5.7,7) node {\footnotesize{$\alpha_{r-1}$}};
\draw(5.5,9) node {\footnotesize{$\alpha_r$}};
\draw(1.2,8)--(1.8,8);
\draw(1,8.2)--(1,8.8);
\draw(1,7.2)--(1,7.8);
\draw(2.2,8)--(2.6,8);
\draw(3.4,8)--(3.8,8);
\draw(4.2,8)--(4.8,8);
\draw(3,4.2)--(3,4.8);
\draw(3,5.2)--(3,5.8);
\draw(5,8.2)--(5,8.8);
\draw(5,7.2)--(5,7.8);

\draw(-0.5,11) node {$\fc_r^{(1)}$};
\draw(-0.5,10.5) node {\scriptsize{$r>1$}};
\draw(1,11) circle (5pt);
\draw(2,11) circle (5pt);
\draw(3,11) node {$\ldots$};
\draw(4,11) circle (5pt);
\draw(5,11) circle (5pt);
\draw[double](1.20,11)--(1.8,11); \draw(1.7,10.90)--(1.8,11); \draw(1.7,11.1)--(1.8,11);
\draw(2.2,11)--(2.6,11);
\draw(3.4,11)--(3.8,11);
\draw[double](4.2,11)--(4.8,11); \draw(4.3,11.1)--(4.2,11); \draw(4.3,10.9)--(4.2,11);
\draw(1,10.6) node {\footnotesize{$\alpha_0$}};
\draw(2,10.6) node {\footnotesize{$\alpha_1$}};
\draw(4,10.6) node {\footnotesize{$\alpha_{r-1}$}};
\draw(5,10.6) node {\footnotesize{$\alpha_r$}};
\draw(1,11) node {\tiny{$1$}};
\draw(2,11) node {\tiny{$2$}};
\draw(4,11) node {\tiny{$2$}};
\draw(5,11) node {\tiny{$1$}};

\draw(-0.5,13) node {$\fb_r^{(1)}$};
\draw(-0.5,12.5) node {\scriptsize{$r>2$}};
\draw(1,13) circle (5pt);
\draw(2,13) circle (5pt);
\draw(3,13) node {$\ldots$};
\draw(4,13) circle (5pt);
\draw(5,13) circle (5pt);
\draw(1,12) circle (5pt);
\draw(1,14) circle (5pt);
\draw(1,13) node {\tiny{$2$}};
\draw(1,12) node {\tiny{$1$}};
\draw(1,14) node {\tiny{$1$}};
\draw(2,13) node {\tiny{$2$}};
\draw(4,13) node {\tiny{$2$}};
\draw(5,13) node {\tiny{$2$}};
\draw(1.2,13)--(1.8,13);
\draw(1,13.2)--(1,13.8);
\draw(1,12.2)--(1,12.8);
\draw(2.2,13)--(2.6,13);
\draw(3.4,13)--(3.8,13);
\draw[double](4.2,13)--(4.8,13); \draw(4.7,13.1)--(4.8,13); \draw(4.7,12.9)--(4.8,13);
\draw(0.5,13) node {\footnotesize{$\alpha_2$}};
\draw(0.5,12) node {\footnotesize{$\alpha_1$}};
\draw(0.5,14) node {\footnotesize{$\alpha_0$}};
%\draw(2,12) node {\footnotesize{$\alpha_1$}};
\draw(4,12.6) node {\footnotesize{$\alpha_{r-1}$}};
\draw(5,12.6) node {\footnotesize{$\alpha_r$}};

\draw(-0.5,15) node {$\fa_r^{(1)}$};
\draw(-0.5,14.5) node {\scriptsize{$r>1$}};
\draw(1,15) circle (5pt);
\draw(2,15) circle (5pt);
\draw(3,15) node {$\ldots$};
\draw(4,15) circle (5pt);
\draw(5,15) circle (5pt);
\draw(3,16) circle (5pt);
\draw(1.2,15)--(1.8,15);
\draw(2.2,15)--(2.6,15);
\draw(3.4,15)--(3.8,15);
\draw(4.2,15)--(4.8,15);
\draw(1.2,15.1)--(2.8,16);
\draw(3.2,16)--(4.8,15.1);
\draw(1,15) node {\tiny{$1$}};
\draw(3,16) node {\tiny{$1$}};
\draw(2,15) node {\tiny{$1$}};
\draw(4,15) node {\tiny{$1$}};
\draw(5,15) node {\tiny{$1$}};
\draw(1,14.6) node {\footnotesize{$\alpha_1$}};
\draw(2,14.6) node {\footnotesize{$\alpha_2$}};
\draw(3,16.4) node {\footnotesize{$\alpha_0$}};
\draw(4,14.6) node {\footnotesize{$\alpha_{r-1}$}};
\draw(5,14.6) node {\footnotesize{$\alpha_r$}};

\draw(-0.5,17) node {$\fa_1^{(1)}$};
\draw(1,17) circle (5pt);
\draw(2,17) circle (5pt);
\draw[double] (1.2,17.05) -- (1.8,17.05);
\draw[double] (1.2,16.95) -- (1.8,16.95);
\draw(1,17) node {\tiny{$1$}};
\draw(2,17) node {\tiny{$1$}};
\draw(1,16.6) node {\footnotesize{$\alpha_0$}};
\draw(2,16.6) node {\footnotesize{$\alpha_1$}};

%RIGHT SIDE
\draw(9.5,17) node {$\ff_4^{(1)}$};
\draw(11,17) circle (5pt);
\draw(12,17) circle (5pt);
\draw(13,17) circle (5pt);
\draw(14,17) circle (5pt);
\draw(15,17) circle (5pt);
\draw(11.2,17)--(11.8,17);
\draw(12.2,17)--(12.8,17);
\draw[double](13.2,17)--(13.8,17); \draw(13.7,17.1)--(13.8,17); \draw(13.7,16.9)--(13.8,17);
\draw(14.2,17)--(14.8,17);
\draw(11,17) node {\tiny{$1$}};
\draw(12,17) node {\tiny{$2$}};
\draw(13,17) node {\tiny{$3$}};
\draw(14,17) node {\tiny{$4$}};
\draw(15,17) node {\tiny{$2$}};
\draw(11,16.6) node {\footnotesize{$\alpha_0$}};
\draw(12,16.6) node {\footnotesize{$\alpha_1$}};
\draw(13,16.6) node {\footnotesize{$\alpha_2$}};
\draw(14,16.6) node {\footnotesize{$\alpha_3$}};
\draw(15,16.6) node {\footnotesize{$\alpha_4$}};

\draw(9.5,15) node {$\fg_2^{(1)}$};
\draw(11,15) circle (5pt);
\draw(12,15) circle (5pt);
\draw(13,15) circle (5pt);
\draw(11.2,15)--(11.8,15);
\draw(12.2,15)--(12.8,15); \draw(12.2,15.07)--(12.75,15.07); \draw(12.2,14.93)--(12.75,14.93); %triple bond
\draw(12.7,15.15)--(12.8,15); \draw(12.7,14.85)--(12.8,15);
\draw(11,15) node {\tiny{$1$}};
\draw(12,15) node {\tiny{$2$}};
\draw(13,15) node {\tiny{$3$}};
\draw(11,14.6) node {\footnotesize{$\alpha_0$}};
\draw(12,14.6) node {\footnotesize{$\alpha_2$}};
\draw(13,14.6) node {\footnotesize{$\alpha_1$}};

\draw(10,14) -- (15,14);

\draw(9.5,13) node {$\fa_2^{(2)}$};
\draw(11,13) circle (5pt);
\draw(12,13) circle (5pt);
\draw[double] (11.2,13.05) -- (11.75,13.05);
\draw[double] (11.2,12.95) -- (11.75,12.95);
\draw(11.7,13.15)--(11.8,13); \draw(11.7,12.85)--(11.8,13);
\draw(11,13) node {\tiny{$1$}};
\draw(12,13) node {\tiny{$2$}};
\draw(11,12.6) node {\footnotesize{$\alpha_0$}};
\draw(12,12.6) node {\footnotesize{$\alpha_1$}};

\draw(9.5,11) node {$\fa_{2r}^{(2)}$};
\draw(9.5,10.5) node {\scriptsize{$r>1$}};
\draw(11,11) circle (5pt);
\draw(12,11) circle (5pt);
\draw(13,11) node {$\ldots$};
\draw(14,11) circle (5pt);
\draw(15,11) circle (5pt);
\draw[double](11.20,11)--(11.8,11); \draw(11.7,10.90)--(11.8,11); \draw(11.7,11.1)--(11.8,11);
\draw(12.2,11)--(12.6,11);
\draw(13.4,11)--(13.8,11);
\draw[double](14.2,11)--(14.8,11); \draw(14.7,11.1)--(14.8,11); \draw(14.7,10.9)--(14.8,11);
\draw(11,10.6) node {\footnotesize{$\alpha_0$}};
\draw(12,10.6) node {\footnotesize{$\alpha_1$}};
\draw(14,10.6) node {\footnotesize{$\alpha_{r-1}$}};
\draw(15,10.6) node {\footnotesize{$\alpha_r$}};
\draw(11,11) node {\tiny{$1$}};
\draw(12,11) node {\tiny{$2$}};
\draw(14,11) node {\tiny{$2$}};
\draw(15,11) node {\tiny{$2$}};

\draw(9.5,8) node {$\fa_{2r-1}^{(2)}$};
\draw(9.5,7.5) node {\scriptsize{$r>2$}};
\draw(11,8) circle (5pt);
\draw(12,8) circle (5pt);
\draw(13,8) node {$\ldots$};
\draw(14,8) circle (5pt);
\draw(15,8) circle (5pt);
\draw(11,7) circle (5pt);
\draw(11,9) circle (5pt);
\draw(11,8) node {\tiny{$2$}};
\draw(11,7) node {\tiny{$1$}};
\draw(11,9) node {\tiny{$1$}};
\draw(12,8) node {\tiny{$2$}};
\draw(14,8) node {\tiny{$2$}};
\draw(15,8) node {\tiny{$1$}};
\draw(11.2,8)--(11.8,8);
\draw(11,8.2)--(11,8.8);
\draw(11,7.2)--(11,7.8);
\draw(12.2,8)--(12.6,8);
\draw(13.4,8)--(13.8,8);
\draw[double](14.2,8)--(14.8,8); \draw(14.3,8.1)--(14.2,8); \draw(14.3,7.9)--(14.2,8);
\draw(10.5,8) node {\footnotesize{$\alpha_2$}};
\draw(10.5,7) node {\footnotesize{$\alpha_1$}};
\draw(10.5,9) node {\footnotesize{$\alpha_0$}};
\draw(14,7.6) node {\footnotesize{$\alpha_{r-1}$}};
\draw(15,7.6) node {\footnotesize{$\alpha_r$}};

\draw(9.5, 6) node {$\fd_{r+1}^{(2)}$};
\draw(9.5,5.5) node {\scriptsize{$r>1$}};
\draw(11, 6) circle (5pt);
\draw(12, 6) circle (5pt);
\draw(13, 6) node {$\ldots$};
\draw(14, 6) circle (5pt);
\draw(15, 6) circle (5pt);
\draw[double](11.20, 6)--(11.8, 6); \draw(11.3, 5.90)--(11.2, 6); \draw(11.3, 6.1)--(11.2, 6);
\draw(12.2, 6)--(12.6, 6);
\draw(13.4, 6)--(13.8, 6);
\draw[double](14.2, 6)--(14.8, 6); \draw(14.7, 6.1)--(14.8, 6); \draw(14.7, 5.9)--(14.8, 6);
\draw(11, 5.6) node {\footnotesize{$\alpha_0$}};
\draw(12, 5.6) node {\footnotesize{$\alpha_1$}};
\draw(14, 5.6) node {\footnotesize{$\alpha_{r-1}$}};
\draw(15, 5.6) node {\footnotesize{$\alpha_r$}};
\draw(11, 6) node {\tiny{$1$}};
\draw(12, 6) node {\tiny{$1$}};
\draw(14, 6) node {\tiny{$1$}};
\draw(15, 6) node {\tiny{$1$}};

\draw(9.5, 4) node {$\fe_{6}^{(2)}$};
\draw(11, 4) circle (5pt);
\draw(12, 4) circle (5pt);
\draw(13, 4) circle (5pt);
\draw(14, 4) circle (5pt);
\draw(15, 4) circle (5pt);
\draw(11.2, 4)--(11.8, 4);
\draw[double](12.2, 4)--(12.8, 4); \draw(12.7, 4.1)--(12.8, 4); \draw(12.7, 3.9)--(12.8, 4);
\draw(13.2, 4)--(13.8, 4);
\draw(14.2, 4)--(14.8, 4);
\draw(11, 3.6) node {\footnotesize{$\alpha_4$}};
\draw(12, 3.6) node {\footnotesize{$\alpha_3$}};
\draw(13, 3.6) node {\footnotesize{$\alpha_2$}};
\draw(14, 3.6) node {\footnotesize{$\alpha_1$}};
\draw(15, 3.6) node {\footnotesize{$\alpha_0$}};
\draw(11, 4) node {\tiny{$1$}};
\draw(12, 4) node {\tiny{$2$}};
\draw(13, 4) node {\tiny{$3$}};
\draw(14, 4) node {\tiny{$2$}};
\draw(15, 4) node {\tiny{$1$}};

\draw(10,3) -- (15,3);

\draw(9.5, 2) node {$\fd_4^{(3)}$};
\draw(11, 2) circle (5pt);
\draw(12, 2) circle (5pt);
\draw(13, 2) circle (5pt);
\draw(12.2, 2)--(12.8, 2);
\draw(11.2, 2)--(11.8, 2); \draw(11.2, 2.07)--(11.75, 2.07); \draw(11.2, 1.93)--(11.75, 1.93); %triple bond
\draw(11.7, 2.15)--(11.8, 2); \draw(11.7, 1.85)--(11.8, 2);
\draw(11, 2) node {\tiny{$1$}};
\draw(12, 2) node {\tiny{$2$}};
\draw(13, 2) node {\tiny{$1$}};
\draw(11, 1.6) node {\footnotesize{$\alpha_0$}};
\draw(12, 1.6) node {\footnotesize{$\alpha_1$}};
\draw(13, 1.6) node {\footnotesize{$\alpha_2$}};

\end{tikzpicture}
\caption{Affine Dynkin diagrams. There are $r+1$ vertices for all diagrams for which $r$ is a parameter. The numbers in the
circles are the coefficients $m_j$ for which $\sum_{j=0}^rm_j\alpha_j=0$, $m_0=1$.}
\end{figure}
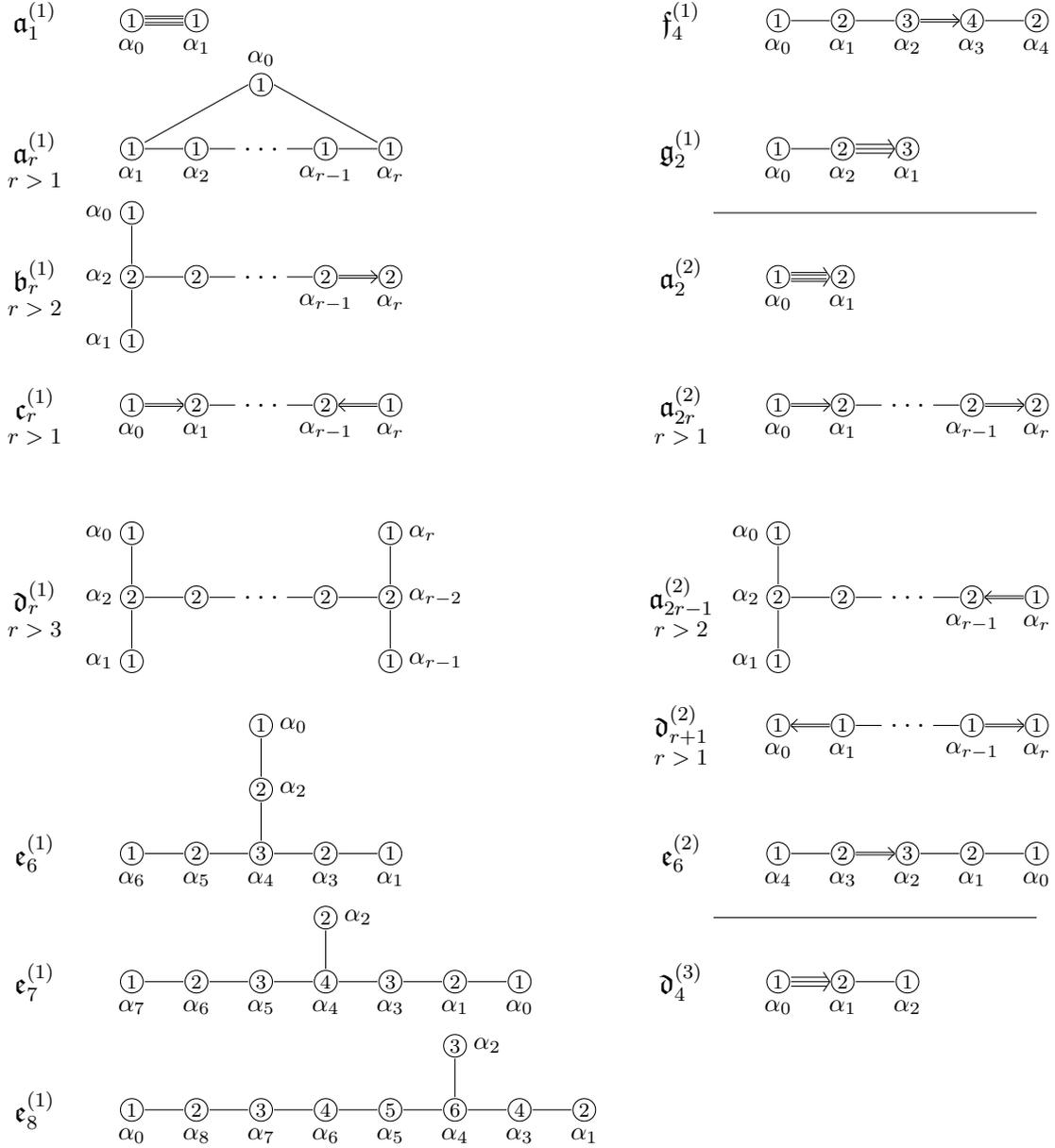

\end{document}